\newtheorem{theorem}{Theorem}
\newtheorem*{theorem*}{Theorem}
\newtheorem{lemma}{Lemma}[section]
\newtheorem{corollary}[lemma]{Corollary}
\newtheorem*{corollary*}{Corollary}
\newtheorem{proposition}[lemma]{Proposition}
\theoremstyle{definition}
\newtheorem{conjecture}{Conjecture}
\newtheorem{remark}[lemma]{Remark}
\newcommand{\spacey}{\mathcal{P}}
\newcommand{\decayspace}{\mathcal{X}}
\newcommand{\R}{\mathbb R}
\newcommand{\Laplacian}{\Delta}
\newcommand{\OvDel}{\overline{\Laplacian}}
\newcommand{\Diff}{\operatorname{Diff}}
\newcommand{\aconst}{\sigma}
\newcommand{\A}{(\aconst-\Delta)^k}
\newcommand{\ad}{\operatorname{ad}}
\newcommand{\Ad}{\operatorname{Ad}}
\newcommand{\funca}{\alpha}
\newcommand{\funcb}{\beta}
\newcommand{\ratioa}{R_{\alpha}}
\newcommand{\ratiob}{R_{\beta}}
\newcommand{\jfunc}{j_n}
\newcommand{\constyp}{c_p}
\newcommand{\numerator}{N}
\newcommand{\ubar}{\overline{u}}
\newcommand{\id}{\mathrm{id}}
\newcommand{\Jac}{\operatorname{Jac}}
\newcommand{\omegabar}{\overline{\omega}}
\newcommand{\Pspace}{\mathcal{W}}
\newcommand{\vectyF}{\mathcal{F}}
\newcommand{\vectyFone}{\mathcal{F}^1}
\newcommand{\vectyFtwo}{\mathcal{F}^2}
\newcommand{\power}{\lambda}
\title[Breakdown of Euler-Arnold equations]{Liouville comparison theory for breakdown of Euler-Arnold equations}
\author[]
{Martin Bauer, Stephen C. Preston, and Justin Valletta}
\address{Martin Bauer: Florida State University and University of Vienna\\
Stephen Preston: Brooklyn College and CUNY Graduate Center\\
Justin Valletta: Florida State University}
\email{bauer@math.fsu.edu, stephen.preston@brooklyn.cuny.edu, jvalletta@fsu.edu}
\date{\today}
\keywords{}
\subjclass[2010]{%
}
\begin{document}

\begin{abstract}
In this article we study breakdown of solutions for (generalized) Euler-Arnold equations on $\mathbb R^n$. 
Our method is based on treating the equation in Lagrangian coordinates, where it is an ODE on the diffeomorphism group, and  comparison  with the Liouville equation; in contrast to the usual comparison approach at a single point, we apply comparison in an infinite dimensional function space. We thereby show that the Jacobian of the Lagrangian flow map of the solution reaches zero in finite time, which corresponds to $C^1$-breakdown of the velocity field solution. We demonstrate the applicability %
of our result by proving breakdown of smooth solutions to some higher-order versions of the
EPDiff equation in all dimensions $n\geq 3$, 
even in situations where the one-dimensional equation has global solutions, such as the EPDiff equation corresponding to a Sobolev metric of order two. 
\end{abstract}

\maketitle

\setcounter{tocdepth}{1}
\tableofcontents

\section{Introduction}
\subsection{Background}
In his seminal paper~\cite{arnold2014differential} Arnold recast Euler's equation for the motion of an incompressible fluid as the geodesic equation of a (right invariant) Riemannian metric on the group of volume preserving diffeomorphisms. Since then it has been shown that an analogous geometric interpretation---as geodesic equations of a right-invariant connection on a group of diffeomorphisms---exists for many prominent PDEs in mathematical physics; such equations are now commonly referred to as (generalized) Euler-Arnold equations. Examples include the Camassa-Holm~\cite{camassa1993integrable,misiolek1998shallow,kouranbaeva1999camassa}, the Hunter-Saxton~\cite{hunter1991dynamics,lenells2007hunter}, the modified Constantin-Lax-Majda~\cite{constantin1985simple,escher2011geometry}, the KdV~\cite{ovsienko1987korteweg}, or the surface quasi-geostrophic equation (SQG)~\cite{washabaugh2016sqg}. See also~\cite{arnold2008topological,vizman2008geodesic} for further examples.

 In this article we will be interested the family of non-linear PDE of order $2k\in\mathbb R$ given by
\begin{equation}\label{eq:EPDiff}
\Omega_t+\nabla_U \Omega+  (\nabla U)^T\Omega+\operatorname{div}(U)\Omega=0,\qquad \Omega=(\aconst-\Delta)^k U, \quad \aconst\geq 0, k\geq 0
\end{equation}
where $U:[0,T)\times \mathbb R^n\to \mathbb R^n$ is a time dependent vector field and $\Delta$ is the vector Laplacian on $\mathbb{R}^n$.

The interest in this family of equations can be motivated from different angles. First, it encompasses several of the above mentioned physically relevant PDEs as special cases, including the Camassa-Holm, the Hunter-Saxton equations, and the mCLM equation. Second, it admits a geometric interpretation as an Euler-Arnold equation, i.e., it can be realized as the geodesic equation of the right-invariant  Sobolev metric of order $k$ on the group of diffeomorphisms of $\mathbb R^n$. Such equations are often also called EPDiff equations, short for Euler-Poincar\'e equation on the diffeomorphism group. Note that the terminology EPDiff was first used for the Euler-Arnold equation of the right invariant Sobolev metric of order one (corresponding to the Camassa-Holm equation) (see e.g.,~\cite{holm1998euler,holm2005momentum}), but  since then it has become common to use it more broadly for Euler-Arnold equations of general (Riemannian or Finslerian) metrics on groups of diffeomorphisms, see~\cite{mumford2013euler,bruveris2017completeness,bauer2015local,cotter2022r} and the references therein. Finally, right invariant Sobolev metrics on diffeomorphism groups and thus equation~\eqref{eq:EPDiff} take a central role in the fields of template matching and shape analysis~\cite{younes2010shapes,dryden2016statistical}, particularly in the widely acclaimed LDDMM framework~\cite{beg2005computing}. This approach, following the spirit of Grenander's pattern theory~\cite{grenander1996elements}, represents the differences between shapes as optimal transformations (diffeomorphisms) between objects, where  the optimality is measured with respect to a right invariant metric of high order on the transformation group. Consequently the EPDiff equation arises as the first order optimality condition. In this context, it is also known as the template matching equation.

Over the past decades the short and long-time existence of the EPDiff equation has been studied in detail. A significant amount of this analysis dates back to the seminal paper of Ebin and Marsden~\cite{ebin1970groups}, who used Arnold's  geometric picture~\cite{arnold2014differential} to obtain local well-posedness of the incompressible Euler-equations. Using similar methods, the local well-posedness for the EPDiff equation has been established, assuming that the order $k$ satisfies $k\geq \frac{1}{2}$ (independently of the dimension $n$); see~\cite{gay2009well,misiolek2010fredholm,escher2014right,bauer2015local,bauer2020well,trouve2005local}. Global existence can sometimes be established using entirely geometric arguments: if the order $k>\frac{n}{2}+1$ then the corresponding geometric description can be extended to a strong, right-invariant metric on a group of Sobolev diffeomorphisms. In this setting the global existence of solutions  holds true by general arguments, and one directly obtains the  global well-posedness in the smooth category by applying an Ebin-Marsden type of no-loss-no-gain argument; see e.g., the work by  Escher, Kolev, Michor, Mumford and others~\cite{bruveris2017completeness,escher2014geodesic,mumford2013euler,misiolek2010fredholm,bauer2015local,bauer2023regularity,ebin1970groups}.

For the EPDiff equation of order $k=0$ and $k=1$, much work has been dedicated to showing that the global wellposedness fails, i.e., that there exists smooth initial conditions such that the corresponding solutions break down in finite time. In particular in dimension $n=1$ and order $k=0$ this corresponds to breakdown of the inviscid Burgers' equation. For $n=1$ and $k=1$, we have to distinguish between the homogeneous inertia operator $A=-\partial_x^2$, corresponding to the Hunter-Saxton equation, and the non-homogeneous inertia operator $A=1-\partial_x^2$, corresponding to the Camassa-Holm equation. Breakdown (wave breaking) for the Camassa-Holm equation is known from the original paper of Camassa and Holm~\cite{camassa1993integrable}; see also~\cite{constantin1998wave} for a more rigorous mathematical analysis. The complete picture of the breakdown mechanism for this equation has been obtained by McKean~\cite{mckean2015breakdown}. For the Hunter-Saxton equation the situation is much simpler, as one can obtain an explicit solution formula~\cite{lenells2007hunter,bauer2014homogeneous} which then directly leads to proof of breakdown; see also~\cite{yin2004structure}. Note that this yields a complete characterization for (integer order) EPDiff equations in dimension one: they are globally well-posed for any smooth initial conditions if $k\geq 2$, and there exist initial conditions such that the corresponding solutions break down in finite time for $k\in \{0,1\}$. 

Studying breakdown for solutions to higher-dimensional EPDiff equations was first proposed by Chae and Liu  in ~\cite{chae2012blow}, where they also confirmed breakdown for the higher dimensional Burgers' equation $(k=0$). For the higher dimensional Camassa-Holm equation $(k=1)$, this result has been obtained by Li, Yu, and Zhai in~\cite{li2013euler}, who showed that the  breakdown mechanism of dimension one can be adapted to the higher dimensional situation.
 In the higher dimensional situation there is, however, a gap to the global well-posedness results for the EPDiff equation: already in dimension two the global existence result is only valid for $k\geq 3$; furthermore all the above breakdown results for EPDiff equations in higher dimensions are only for equations with $k=0$ or $k=1$ where the one-dimensional version already admits breakdown. The main results of the present article, which we will describe next, are of a  different nature: they include cases where the breakdown of the equation is a genuinely higher dimensional phenomenon; specifically we deal primarily with the case $k=2$ where the one-dimensional equation has global solutions for all initial conditions, while in dimensions $n\ge 3$ we will show that some solutions exhibit breakdown. Furthermore we suspect that the methods presented here can be adapted to any value of $k<\frac{n}2+1$ (including non-integer $k$) and in any dimension to produce similar results. Note that this would yield an almost complete characterization, as for $k>\frac{n}{2}+1$ it is known that global existence of solutions  holds true.

\subsection{Main Contributions}
The starting point of the present article is the observation that the EPDiff equation~\eqref{eq:EPDiff}
admits radial solutions, which allows us to reduce the investigations to an equation of a single variable.
The corresponding ``radial EPDiff equation'' takes the form
\begin{equation}\label{mainomega}
\omega_t + u \omega_r + 2u_r \omega + \tfrac{n-1}{r} u \omega = 0, \qquad \omega\partial_r = (\aconst - \Delta)^k (u\,\partial_r),
\end{equation}
where $\sigma \in \{0,1\}$ and $k\in \{1,2\}$ and where the vector Laplacian $\Delta$ on $\mathbb{R}^n$ for radial functions is explicitly given by
\begin{equation}\label{vectorlaplacian}
\Delta\big(u(r)\partial_r\big)=\Big(u''(r)  + \frac{n-1}{r}\, u'(r) - \frac{n-1}{r^2} \, u(r)\Big)\partial_r.
\end{equation}
Note that this is \emph{not} the same as the usual Laplacian on functions; in particular it is not the case that $\omega = (\aconst-\Delta)^k u$ as functions, which is why we must be careful in this notation.
 For simplicity we will simply denote the family of radial equations \eqref{mainomega} as either the radial $\dot{H}^k$ family if $\aconst=0$ or the radial $H^k$ family if $\aconst=1$, regardless of dimension $n$.

Our main breakdown results for this family of equations---thus also for the EPDiff equations~\eqref{eq:EPDiff} ---are summarized in the following theorem:
\begin{theorem*}[Theorems~\ref{exactsolnH1thm}, \ref{thm:H2dotbreakdown}, \ref{thm:H1breakdown} and \ref{thm:H2breakdown}]
Let $n\geq 3$ and suppose that the initial momentum  $\omega_0$ satisfies $\omega_0(r)\le 0$ for all $r\ge 0$.
Then the solution of the radial EPDiff equation~\eqref{mainomega} with inertia operator $(\sigma-\Delta)^k$, $k\in\{1,2\}$ and $\sigma\in\{0,1\}$ breaks down in finite time. If $k=1$ the result continues to hold for $n\leq 2$. 
\end{theorem*}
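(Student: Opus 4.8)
The plan is to work in Lagrangian coordinates, converting the PDE \eqref{mainomega} into an ODE on the diffeomorphism group and then comparing with the Liouville equation in an infinite-dimensional function space, as advertised in the abstract. Concretely, let $\eta(t,r)$ be the flow of the time-dependent radial vector field $u(t,\cdot)$, so that $\eta_t(t,r) = u(t,\eta(t,r))$ with $\eta(0,r)=r$. The momentum transport equation \eqref{mainomega} is designed so that $\omega$ pulled back by $\eta$ evolves by an exact ODE in $t$; one checks that $\omega(t,\eta(t,r))\,\eta_r(t,r)^2\,\eta(t,r)^{n-1}$ is conserved (this is the Lagrangian conservation law underlying the $H^k$ EPDiff family), so $\omega$ along the flow is known explicitly in terms of $\eta_r$ and $\eta$, and in particular keeps the sign of $\omega_0$, i.e.\ stays $\le 0$.

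**Next I would** extract a closed second-order ODE for $\eta$ (or for the Jacobian-type quantity $\eta_r$) by inverting the inertia operator. Since $\omega\partial_r = (\sigma-\Delta)^k(u\,\partial_r)$, the velocity $u$ is recovered from $\omega$ by a Green's-function/convolution operator, which is positivity-improving (at least for $k=1$, the kernel of $(\sigma-\Delta)^{-1}$ on $\mathbb{R}^n$ is positive; for $k=2$ one iterates). Thus $\omega_0\le 0$ forces a definite sign on $u$ and hence on $\partial_t \eta_r$ at $t=0$, and more importantly one obtains a differential inequality of the form $\partial_t^2 \eta_r \le (\text{something})\cdot F(\eta_r)$ where the comparison function $F$ is exactly the nonlinearity in the Liouville ODE $\ddot y = -e^{y}$ (or the relevant power, depending on $k$ and $n$). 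The point of the Liouville comparison — which I would invoke from the earlier sections of the paper rather than re-derive — is that once $\eta_r$ satisfies such an inequality with the right initial data (here coming from $\omega_0\le0$), the solution of the comparison ODE reaches $0$ in finite time, and a comparison principle forces $\eta_r$ to hit $0$ no later. Since $\eta_r\to 0$ means $\Jac \eta \to 0$, the flow map degenerates and $\|\nabla U\|_{C^0}$ must blow up before that time.

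**The main obstacle** is handling the nonlocal inertia operator: unlike the pointwise blowup arguments of Chae–Liu and Li–Yu–Zhai, inverting $(\sigma-\Delta)^k$ couples the behavior of $\eta_r$ at different radii, so the comparison cannot be done at a single point $r$. This is precisely why the paper works in the infinite-dimensional function space $\mathcal{W}$ (or $\mathcal{P}$): one must show that the relevant functional of $\eta$ — morally $\int \omega_0(r)\,(\text{kernel})\,dr$ evaluated along the flow — dominates the Liouville nonlinearity uniformly, using the sign condition $\omega_0\le 0$ together with positivity of the Green's function and a lower bound on how $\eta$ distorts the measure $r^{n-1}\,dr$. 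For $k=2$ the kernel of $(\sigma-\Delta)^{-2}$ is still positive but the estimates on its decay and on the iterated convolution are more delicate, and the dimension restriction $n\ge 3$ presumably enters here (for $n\le 2$ the $k=2$ kernel or the weight $r^{n-1}$ no longer gives the needed growth, while for $k=1$ the argument survives down to $n=1$); separately, one must verify that the explicit $H^1$ solution (Theorem~\ref{exactsolnH1thm}) is consistent with and sharp for this mechanism.

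**Finally**, I would assemble the four cases: the $\dot H^1$ and $H^1$ cases ($k=1$, $\sigma\in\{0,1\}$, all $n\ge1$) follow most directly since the inverse operator and its kernel are classical and the comparison ODE is the genuine Liouville equation; the $\dot H^2$ and $H^2$ cases ($k=2$) require the iterated-kernel positivity and the $n\ge3$ hypothesis, and here one also checks that smooth compactly-supported (or rapidly decaying) $\omega_0\le0$ initial data actually exist and launch solutions in the local-well-posedness class guaranteed by the cited results, so that ``breakdown in finite time'' is a statement about genuine smooth solutions. Throughout, the conservation law from the first step is what guarantees the sign condition propagates, so no additional sign hypothesis beyond $\omega_0\le0$ is needed.
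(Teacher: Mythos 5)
Your overall architecture (Lagrangian flow, the conservation law $\gamma^{n-1}\gamma_r^2\,\omega\circ\gamma=r^{n-1}\omega_0$, Green's-function inversion to get a particle-trajectory equation, comparison with a Liouville-type equation in a function space, and breakdown read off from $\gamma_r\to 0$) matches the paper's plan, but the step where you actually produce the differential inequality has a genuine gap. When one differentiates the particle equation in $r$, the evolution of $\rho=\gamma_r$ (equation \eqref{rhoODE}) contains \emph{two} nonlocal terms: the integral over $[r,\infty)$ with kernel $\partial_1\delta$, which with $\omega_0\le 0$ indeed pushes $\rho$ down, and the integral over $[0,r]$ with kernel $\partial_2\delta$, which for these kernels has the opposite sign (e.g.\ for Camassa--Holm $\partial_2\delta(r,s)=-e^{-s}\sinh r\le 0$), so with $\omega_0\le0$ it pushes $\rho$ \emph{up} and works against blowup. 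Positivity of the Green's function plus the sign of $\omega_0$ therefore does not give a one-sided inequality for $\rho$ itself, and your claim that one ``obtains a differential inequality $\partial_t^2\eta_r\le(\text{something})\cdot F(\eta_r)$'' from positivity alone does not go through. The paper's resolution is the multiplier construction of Theorem~\ref{Qtheorem}: one sets $Q(r)=1/(r\varphi(0,r))$ and studies $q=Q(\gamma)\gamma_r/Q(r)$, chosen so that the $[0,r]$ term combines with the $\gamma$-equation into a term of the favorable sign (condition \eqref{condition1}), while the $[r,\infty)$ term is bounded below by $C/Q(s)$ (condition \eqref{condition2}). This requires not just positivity but two further structural hypotheses on the kernel --- log-supermodularity $\partial_r\partial_s\ln\varphi\ge 0$ and the quantitative lower bound $S(r)\ge C>0$ of \eqref{Sdef} --- and it is a \emph{first-order} comparison with the nonlocal Liouville equation $\partial_t\ln q=\int_r^\infty z_0/q\,ds$ (explicit solution $(1+\tfrac t2\int_r^\infty z_0)^2$, Lemma~\ref{lem:Liouville}), together with a monotonicity hypothesis in the Banach-space comparison lemma (Lemma~\ref{lem:comparison}); it is not the classical ODE $\ddot y=-e^y$ you invoke.

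The missing quantitative condition is also exactly where the dimension restriction lives, and this shows your sketch cannot be repaired without it: for the $H^2$ case one computes $S(0)=2(n-2)/(n+2)$, which is positive precisely for $n\ge 3$ and vanishes at $n=2$; the paper points out (Section~\ref{sec:conclusion}) that in the critical case $k=2$, $n=2$ radial solutions are in fact global. Since kernel positivity and $\omega_0\le 0$ are equally available at $n=2$, any argument resting only on those ingredients would ``prove'' blowup there as well, i.e.\ a false statement. Relatedly, for $\sigma=1$, $k=2$ the verification of positivity, log-supermodularity and $S\ge C$ is not a soft iteration of the $k=1$ kernel: it requires the explicit Bessel-function Green's function \eqref{H2fullinversion} (which contains a negative term), Riccati-equation bounds on the ratios $\funca_{n-2}/\funca_n$ and $\funcb_{n-2}/\funcb_n$, the Wronskian identity, and small/large-$r$ asymptotics (proof of Theorem~\ref{thm:H2blowup}). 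For the homogeneous $\dot H^2$ case the restriction $n\ge3$ additionally enters because $\Delta^{-2}$ does not even exist on decaying radial fields when $n\le 2$ (Remark~\ref{rem:hom_metrics}), which is closer to the decay heuristic you gave, but that heuristic does not cover the nonhomogeneous $H^2$ case. Finally, note the paper also needs the local-existence input in the $C^1$/particle setting for the homogeneous metrics (Appendix~\ref{sec:app_well}), which your outline only mentions in passing for the smooth nonhomogeneous setting.
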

To be more specific, the case $k=1$ and $\sigma=0$ is shown  
in Theorem~\ref{exactsolnH1thm}, the case $k=2$ and $\sigma=0$  
in Theorem~\ref{thm:H2dotbreakdown}, the case $k=1$ and $\sigma=1$  
in Theorem~\ref{thm:H1breakdown}, and the case $k=2$ and $\sigma=1$  
in Theorem~\ref{thm:H2breakdown}.
The proof of Theorem \ref{exactsolnH1thm} is essentially a computation, while the proofs of Theorems \ref{thm:H2dotbreakdown}, \ref{thm:H1breakdown} and \ref{thm:H2breakdown} will follow from the more general breakdown result for Euler-Arnold type equations,
which is based on a comparison theorem  with a Liouville-type equation given below in Theorem \ref{Qtheorem}. Next we will explain the main ideas of it in more detail.

%\subsection*{The underlying principle}
%Using comparison theory to prove breakdown of solutions to PDEs such as~\eqref{mainomega} is a classical approach; the method of the present paper is, however,  different as we will argue next.
%Smooth solutions of such nonlinear hyperbolic PDEs generically break down when $\sup_{r\ge 0} \lvert u_r(t,r)\rvert \to \infty$ as $t\to T$, since in typical situations the higher Sobolev norms of a solution can be controlled as soon as the $C^1$ norm can be controlled. Consequently a standard method for proving breakdown---e.g. for the Camassa-Holm equation~\cite{camassa1993integrable,constantin1998wave,mckean2015breakdown}---is to 
%derive a differential inequality for $u_r(t,r_0)$ such as 
%$$ \frac{\partial}{\partial t} u_r(t,r_0) \le %c_0+c_1u_r(t,r_0)+c_2 u_r(t,r_0)^2$$
%at some fixed point $r_0$ (for example when $u_r(0,r_0)$ has a local minimum).
%This allows one to obtain breakdown $\lvert u_r(t,r_0)\rvert\to \infty$ by comparison with the ordinary differential equation $f'(t) = c_0+c_1f(t) + c_2 f(t)^2$ which has a simple exact solution. This approach can also be used for the higher-dimensional Camassa-Holm equation as in Li et al.~\cite{li2013euler}, but it  does not seem to work for the higher-order EPDiff equations studied in the present article, since the remainder quantity does not satisfy a nice bound. To circumvent this difficulty, we will instead derive a  comparison theory for ODEs on Banach spaces, which will allow us to compare the solutions simultaneously on the entire interval $[0,\infty)$ rather than just at one point. 

\subsection*{Reducing the EPDiff equation to an ODE}
Our approach is based on the fact that in Lagrangian coordinates, the EPDiff equation is an ordinary differential equation on a Banach space. Suppose $u(t,r)$ is a solution of the EPDiff equation \eqref{mainomega}; then the Lagrangian flow  $\gamma(t,r)$ is defined to be the solution of the flow equation $\gamma_t(t,r) = u(t,\gamma(t,r))$ with $\gamma(0,r)=r$, and it is a diffeomorphism as long as the velocity field $u(t,r)$ remains smooth (or at least $C^1$).
The basic principle is that every Euler-Arnold equation (and thus in particular the EPDiff equation) satisfies some analogue of the vorticity-transport law (i.e., momentum conservation), which for equation \eqref{mainomega} takes the form 
\begin{equation}\label{momentumtransportgeneral} 
\gamma(t,r)^{n-1} \gamma_r(t,r)^2 \omega(t,\gamma(t,r)) = r^{n-1} \omega_0(r).
\end{equation}
Inverting the operator $(\aconst - \Delta)^k$ that relates $\omega$ to $u$, we can solve for $u(t,\gamma(t,r))$ in terms of the initial momentum $\omega_0$ and the Lagrangian quantities $\gamma$ and $\gamma_r$. 
This yields the particle-trajectory form of the equations, cf.  Majda-Bertozzi~\cite{majda_bertozzi_2001} and Ebin~\cite{ebin1984concise} who used this method for proving local existence for the Euler equations of ideal fluid mechanics. For the radial EPDiff equation, this explicitly looks like 
\begin{equation}\label{eq:particle_equation}
\frac{\partial\gamma(t,r)}{\partial t} = \int_0^r \delta\big(\gamma(t,s),\gamma(t,r)\big) \, \frac{z_0(s)}{\gamma_s(t,s)} \, ds
+ \int_r^{\infty} \delta\big(\gamma(t,r),\gamma(t,s)\big) \, \frac{z_0(s)}{\gamma_s(t,s)} \, ds,\quad \gamma(0,r)=r
\end{equation}
for some Green function $\delta$ defined on the set
$$ D = \{ (r,s) \, \vert\, s\ge r \ge 0\} \backslash \{(0,0)\} \subset \mathbb{R}^2,$$
with $z_0(s) = s^{n-1}\omega_0(s)$.
Differentiating equation \eqref{eq:particle_equation} in $r$ gives a similar equation for $\gamma_r$. Remarkably this procedure often yields an ODE in the variables $(\gamma,\gamma_r)$, in the sense that the right side is a locally Lipschitz and in many cases smooth function on a Banach space, which can be used to construct a local existence theory using Picard iteration.

\subsection*{An explicit solution for the radial Hunter-Saxton equation}
 Our next ingredient is the existence of an explicit solution formula for the radial Hunter-Saxton equation, i.e., for \eqref{mainomega} with $\aconst=0$ and $k=1$, we can directly relate the solution to~\eqref{eq:particle_equation} to the Liouville type equation
\begin{equation}\label{liouvilleboundary}
\frac{\partial}{\partial t}
\ln{q(t,r)} = \int_r^{\infty} \frac{z_0(s)}{q(t,s)}\,ds, \qquad q(0,r)=1 \;\;\forall r\ge 0, \qquad \lim_{r\to\infty} q(t,r) = 1\;\;\forall t\ge 0.
\end{equation}
This is a well-known PDE that has both an ODE interpretation on a Banach space and a simple exact solution, found by Liouville in 1853~\cite{liouville1853equation}, which is of the form $q(t,r) = (1+ty_0(r))^2$ for some function $y_0$. Note that $q$ obviously  reaches zero in finite time if $y_0$ is ever negative. Sarria-Saxton~\cite{sarria2015sign} studied the global behavior of this equation in the periodic domain, inspired by their study of the Lagrangian flow approach to the Proudman-Johnson equation~\cite{sarria2013blow}, and this approach was further developed by Kogelbauer~\cite{kogelbauer2020global}.

The change of variables $q(t,r) = r^{1-n}\gamma(t,r)^{n-1}\gamma_r(t,r)$ turns the radial Hunter-Saxton equation into the Liouville equation \eqref{liouvilleboundary}. (Note that this $q$ is simply the Jacobian determinant of the radial diffeomorphism $\gamma$.) Thus we obtain the following explicit formula for the solution along with a precise breakdown criterion; {we refer to Theorem ~\ref{exactsolnH1thm} for a precise formulation of the assumptions}: 
 \begin{theorem*}
For any initial data $u_0$ the solution to the radial Hunter-Saxton equation (Equation~\eqref{mainomega} with $k=1$ and $\sigma=0$) satisfies the equation
\begin{equation*}%
\gamma(t,r)^{n-1} \rho(t,r) %
= r^{n-1} \left( 1 + \frac{t}{2}\left(u_0'(r)+\frac{n-1}{r}u_0(r)\right) \right)^2
, \qquad \frac{\partial \gamma}{\partial r}(t,r) = \rho(t,r).
\end{equation*}
with
\begin{equation*}
\frac{\partial \gamma}{\partial t}(t,r) = u\big(t, \gamma(t,r)\big), \qquad \gamma(0,r)=r.
\end{equation*}
The solution breaks down in finite time if and only if $u_0'(r)+\frac{n-1}{r}u_0(r) < 0$ for some $r\ge 0$.
\end{theorem*}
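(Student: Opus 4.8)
The plan is to pass to Lagrangian coordinates, reduce the flow equation to a Liouville-type equation for the Jacobian of the flow map, solve that equation by verifying Liouville's classical ansatz, and finally translate positivity of the Jacobian into a $C^1$-persistence statement for the velocity field. Two structural facts about the radial Hunter--Saxton equation, i.e.\ \eqref{mainomega} with $\aconst=0$ and $k=1$, drive everything. First, the momentum transport law \eqref{momentumtransportgeneral}: along a characteristic $x=\gamma(t,r)$ one has $\partial_t\ln\gamma_r=u_r(t,\gamma)$ and $\partial_t\ln\gamma=u(t,\gamma)/\gamma$, so $\partial_t\ln(\gamma^{n-1}\gamma_r^2)=(n-1)u(t,\gamma)/\gamma+2u_r(t,\gamma)$, which exactly cancels the logarithmic decay rate of $\omega(t,\gamma)$ along the flow dictated by \eqref{mainomega}; hence $\gamma^{n-1}\gamma_r^2\,\omega(t,\gamma)\equiv r^{n-1}\omega_0(r)$. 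Second, the inversion of the inertia operator: by \eqref{vectorlaplacian}, setting $w:=u_r+\tfrac{n-1}{r}u$ gives $\omega=-w_r$, whence the decay of $u,u_r$ at infinity yields $w(t,x)=\int_x^{\infty}\omega(t,\xi)\,d\xi$.

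Next I would introduce $q(t,r):=r^{1-n}\gamma(t,r)^{n-1}\gamma_r(t,r)$, the Jacobian of the radial diffeomorphism, so that the claimed identity reads $q(t,r)=\bigl(1+\tfrac t2 w_0(r)\bigr)^2$ with $w_0:=u_0'+\tfrac{n-1}{r}u_0$. Combining the two computations above gives $\partial_t\ln q(t,r)=(n-1)u(t,\gamma)/\gamma+u_r(t,\gamma)=w(t,\gamma(t,r))=\int_{\gamma(t,r)}^{\infty}\omega(t,\xi)\,d\xi$; substituting $\xi=\gamma(t,s)$ and using the momentum transport law together with the definition of $q$ (so that $\omega(t,\gamma(t,s))\gamma_s(t,s)=\omega_0(s)/q(t,s)$) turns this into the Liouville-type equation $\partial_t\ln q(t,r)=\int_r^{\infty}\omega_0(s)/q(t,s)\,ds$ with $q(0,\cdot)=1$ and $\lim_{r\to\infty}q(t,r)=1$ (cf.\ \eqref{liouvilleboundary}, the boundary value at infinity recording the decay of $u$). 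I would then verify Liouville's ansatz $q(t,r)=(1+t\,y_0(r))^2$ directly: matching at $t=0$ forces $y_0=\tfrac12\partial_t\ln q|_{t=0}=\tfrac12(u_0'+\tfrac{n-1}{r}u_0)=\tfrac12\int_r^{\infty}\omega_0(s)\,ds$, and then, since $y_0'=-\tfrac12\omega_0$, the integrand $\omega_0(s)/(1+ty_0(s))^2$ is an exact $s$-derivative, so the integral telescopes to $2y_0(r)/(1+ty_0(r))=\partial_t\ln q$, while $q(0,\cdot)=1$ and $q(t,\infty)=1$ are immediate from $y_0(\infty)=0$. Since \eqref{liouvilleboundary} is an ODE on a Banach space, uniqueness gives $q(t,r)=\bigl(1+\tfrac t2(u_0'(r)+\tfrac{n-1}{r}u_0(r))\bigr)^2$, which is exactly the asserted formula for $\gamma^{n-1}\rho=\gamma^{n-1}\gamma_r$.

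Finally, for the blowup dichotomy: from $\gamma_r(t,r)=\exp\bigl(\int_0^t u_r(\tau,\gamma(\tau,r))\,d\tau\bigr)$ one sees that $\gamma_r(t,r_0)\to 0$ as $t\to T$ is equivalent to $\sup_{r\ge 0}|u_r(t,r)|\to\infty$, i.e.\ to $C^1$-blowup of $u$, and by the local existence theory for \eqref{mainomega} the smooth solution continues exactly as long as $q$ stays bounded and bounded away from zero on $[0,T]\times[0,\infty)$. Since $w_0$ is continuous on the compactified half-line with $w_0(0)=n\,u_0'(0)$ (reading $\tfrac{n-1}{r}u_0(r)$ as its limit) and $w_0(\infty)=0$, the formula $q=(1+\tfrac t2 w_0)^2$ yields: if $w_0\ge 0$ everywhere then $q\ge 1$ for all $t\ge 0$, so the solution is global; while if $w_0(r)<0$ for some $r\ge 0$ then, with $T^\ast:=-2/\inf_{r\ge 0}w_0(r)\in(0,\infty)$, the quantity $q$ is uniformly positive and bounded on $[0,T]\times[0,\infty)$ for every $T<T^\ast$ but $q(\cdot,r_\ast)$ vanishes at $T^\ast$ for the minimizer $r_\ast$, so the solution is smooth on $[0,T^\ast)$ and breaks down precisely at $T^\ast$. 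The one point requiring genuine care is the interface with the local existence theory --- that ``the Jacobian $q$ stays bounded below'' is equivalent to ``the smooth solution persists'', which is what powers the ``if and only if''; the remainder is, as the authors note, essentially a computation.
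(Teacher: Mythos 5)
Your proposal is correct and arrives at exactly the same mechanism as the paper --- the quantity $q(t,r)=r^{1-n}\gamma(t,r)^{n-1}\gamma_r(t,r)$ satisfies the Liouville equation $\partial_t\ln q=\int_r^\infty \omega_0(s)/q(t,s)\,ds$, whose explicit square solution gives the formula and the breakdown criterion --- but you reach that equation by a different route. The paper first passes to the particle-trajectory system \eqref{geodesicH1system} via the Green function $\delta(r,s)=\tfrac1n rs^{1-n}$ of Lemma \ref{lem:greensfunction}, and then observes that the combination $(n-1)\partial_t\ln\gamma+\partial_t\ln\rho$ cancels the integral over $[0,r]$, leaving only the $[r,\infty)$ term; you instead bypass the Green function entirely, using that for this inertia operator $\omega=-\partial_r\bigl(u_r+\tfrac{n-1}{r}u\bigr)$, so $w=u_r+\tfrac{n-1}{r}u$ is recovered by a single integration, and then $\partial_t\ln q=w(t,\gamma)$ becomes the Liouville equation after the change of variables $\xi=\gamma(t,s)$ together with the transport law of Lemma \ref{lem:particletrac}. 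For the solution formula you verify Liouville's ansatz $q=(1+ty_0)^2$ directly (the integrand being an exact $s$-derivative) and appeal to Banach-space ODE uniqueness, whereas the paper cites the derivation in Lemma \ref{lem:Liouville}; these are equivalent, and your uniqueness appeal implicitly rests on the same local Lipschitz property (Proposition \ref{rhopowerslipschitzthm}) that the paper invokes. On the breakdown dichotomy you are in fact somewhat more explicit than the paper's one-line conclusion, correctly isolating the needed interface with the local existence theory (Lemma \ref{blowuplemma} and Corollary \ref{localexistenceC0}): that persistence of the $C^1$ solution is equivalent to $q$ staying bounded away from zero, which is what upgrades the formula to an ``if and only if.'' Your route buys independence from the Green-function machinery (natural here since $k=1$ makes $\omega$ an exact derivative), while the paper's route is the one that generalizes to the higher-order and nonhomogeneous cases treated by Theorem \ref{Qtheorem}.
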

In dimension $n=1$ we recover the result of \cite{bauer2014homogeneous} for the Hunter-Saxton equation on the line, with odd initial data. In higher dimensions this leads to a proof of breakdown for the higher dimensional Hunter-Saxton equation, which has been studied by Modin~\cite{modin2015generalized} due to its connections to geometric statistics~\cite{khesin2013geometry}.

\subsection*{The general breakdown result}
Next we will describe the main methodological contribution of the article: the general breakdown result for equations of type~\eqref{eq:particle_equation} based on comparison theory for ODEs on Banach spaces. The main principle is to consider functions on the space $\Pspace$ consisting of continuous functions $q\colon [0,\infty)\rightarrow [a,b]\subset (0,\infty)$ and autonomous vector fields $F\colon \Pspace \to T\Pspace$ which are locally Lipschitz in the usual supremum norm topology. The space of such positive functions is a convex subset of a Banach space and has a partial order where $q\le \tilde{q}$ iff $q(r)\le \tilde{q}(r)$ for all $r\ge 0$; if $F$ and $\tilde{F}$ are vector fields on this space satisfying the monotonicity property $q\le \tilde{q} \Rightarrow F(q)\le \tilde{F}(\tilde{q})$, then the solutions of $q_t = F(q)$ and $\tilde{q}_t = \tilde{F}(\tilde{q})$ with the same initial condition $q(0)=\tilde{q}(0)$ will satisfy $q(t,r)\le \tilde{q}(t,r)$ for all $t\ge 0$ as long as the solution exists.

We will apply this principle to the Jacobian-like functions $q = Q(\gamma)\gamma_r$ depending on the Lagrangian flow $\gamma$, and we will use the right hand side of the Liouville equation~\cite{liouville1853equation} as our comparison function $\tilde F$. 
Our strategy will be to choose $Q$ to obtain an ODE for $q(t,r)$ and compare it to a known solution $\tilde{q}(t,r)$ of the Liouville equation which  approaches zero in finite time. This will then allow us to conclude that $\gamma_r(t,r)$ approaches zero at least as quickly and thus in finite time. Our main result of this part establishes a general breakdown result for equations of type~\eqref{eq:particle_equation}, {we refer to Theorem~\ref{Qtheorem} for a precise formulation of all assumptions:}
\begin{theorem*}
Assume that the kernel $\delta$ can be written in the form $\delta(r,s)=rs \varphi(r,s)$, where $\varphi$ is smooth
on $D=\{(r,s)\, \vert s\ge r\ge 0\} \backslash\{(0,0)\}$, such that for all $(r,s)\in D$ we have:
\begin{enumerate}%
\item $\varphi(r,s) > 0$; %
\item  $\frac{\partial^2}{\partial r\partial s} \ln{\varphi(r,s)} \ge 0; $ %
 \item there exists a $C>0$ such that %
\begin{equation}
S(r) := \frac{r \partial_1\varphi(r,r) \varphi(0,r)
- r \varphi(r,r) \partial_2\varphi(0,r)}{\varphi(0,r)^2}\ge C.
\end{equation}
\end{enumerate}
Suppose $z_0(r)\leq 0$ for all $r>0$.
Then there exists $T>0$ and $r_0\ge 0$ such that $\gamma_r(T,r_0)=0$, i.e., the solution $\gamma$ leaves the group of $C^1$ diffeomorphisms, and thus the $C^1$ norm of the velocity field $u$ blows up at finite time $T$.
\end{theorem*}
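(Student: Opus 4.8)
The plan is to realize the Jacobian-like quantity $q(t,r) = Q(\gamma(t,r))\gamma_r(t,r)$ as the solution of an autonomous ODE on the Banach space of bounded continuous functions, choosing $Q$ so that the vector field $F$ has a clean form. Differentiating the particle-trajectory equation~\eqref{eq:particle_equation} in $r$, and using the factorization $\delta(r,s) = rs\,\varphi(r,s)$, one sees that with the choice $Q(\rho) = \rho/\varphi(0,\rho)$ (up to normalization) the terms combine so that
\begin{equation*}
\frac{\partial}{\partial t}\ln q(t,r) = F(q)(t,r) = -\int_0^r \big(\partial_1 \ln\varphi\big)(\gamma(t,s),\gamma(t,r))\,\frac{z_0(s)}{\gamma_s(t,s)}\,ds \;+\; S(r)\,\text{-type boundary contribution} + \cdots,
\end{equation*}
where the point of hypotheses (1)–(3) is exactly to guarantee that $F$ is well-defined, locally Lipschitz in the sup-norm, and satisfies the monotonicity property $q\le\tilde q \Rightarrow F(q)\le F(\tilde q)$ whenever $z_0 \le 0$: positivity of $\varphi$ keeps $q$ positive and $Q$ well-defined, the mixed log-derivative condition (2) makes $\partial_1\ln\varphi(\gamma(s),\gamma(r))$ monotone in the right way so the integral term is order-reversing against a negative density, and the lower bound $S(r)\ge C>0$ in (3) produces a genuinely negative forcing term of size bounded away from zero.

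Next I would invoke the abstract comparison principle sketched in the introduction: construct an explicit supersolution $\tilde q(t,r)$ solving the Liouville equation~\eqref{liouvilleboundary} — or rather a simpler autonomous majorant ODE $\tilde q_t = \tilde F(\tilde q)$ with $\tilde F \ge F$ under the order — with the same initial data $\tilde q(0,\cdot) = q(0,\cdot)$, chosen so that $\tilde q(t,r_0)$ hits zero at some finite time $T$ for some $r_0$. Concretely, because $F(q) \le -C$ at (or near) the point where $z_0$ concentrates its negative mass, one expects $\ln q(t,r_0)$ to decrease at least linearly, hence $q(t,r_0) \le e^{-Ct}q(0,r_0)$, which already forces $q \to 0$; sharpening this to finite-time extinction uses the quadratic/Liouville structure $\tilde q = (1+t y_0)^2$ rather than mere exponential decay. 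Once the comparison gives $0 \le q(t,r_0) \le \tilde q(t,r_0)$ and the right side vanishes at $t=T$, we conclude $q(T,r_0)=0$.

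Finally I would translate $q(T,r_0)=0$ back to the original variables. Since $Q(\rho) = \rho/\varphi(0,\rho) > 0$ for $\rho > 0$ by hypothesis (1), and $\gamma(t,r_0) > 0$ is controlled (the flow of a radial diffeomorphism stays positive away from the origin as long as $\gamma_r>0$), the only way $q(T,r_0) = Q(\gamma(T,r_0))\gamma_r(T,r_0)$ can vanish is $\gamma_r(T,r_0) = 0$. This means $\gamma(T,\cdot)$ fails to be a $C^1$ diffeomorphism, and by the momentum-transport identity~\eqref{momentumtransportgeneral} together with the standard fact that $C^1$ control of $u$ propagates regularity, the velocity field's $C^1$ norm must blow up at or before time $T$.

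The main obstacle I anticipate is step one: showing that the vector field $F$ is actually locally Lipschitz on the Banach space and that the monotonicity $q\le\tilde q \Rightarrow F(q)\le F(\tilde q)$ holds globally rather than just infinitesimally. The subtlety is that $F(q)$ involves composition $\varphi(\gamma(s),\gamma(r))$ where $\gamma$ itself is recovered from $q$ by an integration $\gamma_r = q/Q(\gamma)$ — so $F$ is not a pointwise function of $q$ but depends on $q$ through this auxiliary flow, and one must check that the reconstruction map $q\mapsto\gamma$ is order-preserving and Lipschitz, that the singular behavior of $\varphi$ near $(0,0)$ (encoded in the need for hypothesis (3) and the $D\setminus\{(0,0)\}$ domain) does not destroy integrability of the kernel against $z_0/\gamma_s$, and that the boundary term at $s=r$ genuinely contributes $S(r)$ with the right sign. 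Verifying these analytic details — especially the interplay between conditions (2) and (3) and the monotonicity of the full nonlocal operator — is where the real work lies.
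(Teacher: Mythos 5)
Your overall strategy is the paper's: form a Jacobian-like quantity $q=Q(\gamma)\gamma_r$ with $Q$ built from $\varphi(0,\cdot)$, derive a differential inequality for $\ln q$, and compare with the Liouville equation to force $q$, hence $\gamma_r$, to zero in finite time. But the two ingredients that constitute the actual proof are missing or wrong. First, your choice of $Q$ is incorrect: the correct weight is $Q(r)=\tfrac{1}{r\varphi(0,r)}$ as in \eqref{Qdef}, whereas you take $Q(\rho)=\rho/\varphi(0,\rho)$, which differs by a factor $r^2$, not a normalization. In the Hunter--Saxton case the correct $Q$ is $nr^{n-1}$ (recovering the Jacobian $\gamma^{n-1}\rho$), while yours gives $nr^{n+1}$; more seriously, with your $Q$ the diagonal quantity $Q'(r)\delta(r,r)+Q(r)\partial_1\delta(r,r)$ — which is what must be bounded below by a constant, cf.\ \eqref{Sconditionsimple} — equals $r^2S(r)+2r^2\varphi(r,r)/\varphi(0,r)$ and degenerates to $0$ as $r\to 0$, so hypothesis (3) no longer produces a uniform $C$ and the Liouville comparison with forcing $C/Q(s)$ collapses near the origin. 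Second, your description of how the hypotheses enter is not how the argument goes: the evolution of $\ln[Q(\gamma)\rho]$ computed from \eqref{gammaODE}--\eqref{rhoODE} contains no local boundary term and no explicit $S(r)$; it consists of two integrals with kernels $\tfrac{Q'(\gamma(r))}{Q(\gamma(r))}\delta(\gamma(s),\gamma(r))+\partial_2\delta(\gamma(s),\gamma(r))$ on $[0,r]$ and $\tfrac{Q'(\gamma(r))}{Q(\gamma(r))}\delta(\gamma(r),\gamma(s))+\partial_1\delta(\gamma(r),\gamma(s))$ on $[r,\infty)$. The entire proof is the verification of the pointwise kernel inequalities \eqref{condition1}--\eqref{condition2}: log-supermodularity (2) gives \eqref{condition1} after integrating $\partial_s\big(\partial_2\delta(s,r)/\delta(s,r)\big)\ge 0$ down to $s=0$ (this limit is precisely what dictates the formula for $Q$), and then the key observation that the two suprema over $s\ge r$ in \eqref{condition2} are attained at the same diagonal point $s=r$, where the requirement reduces exactly to $S(r)\ge C$. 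You explicitly defer this verification ("where the real work lies"), but it is the theorem's proof, so as it stands the proposal has a genuine gap.

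Two smaller points. The claim that $F(q)\le -C$ gives $q\le e^{-Ct}q(0)$ "which already forces $q\to 0$" is misleading: exponential decay never reaches zero; finite-time vanishing comes only from the Liouville structure, where the forcing $-C\int_r^\infty\frac{\lvert z_0\rvert/Q}{q}$ strengthens as $q$ decreases, yielding the quadratic solution that hits zero at $T=2/\big(C\sup_r\int_r^\infty \lvert z_0(s)\rvert/Q(s)\,ds\big)$. Also, the difficulty you anticipate — making the $q$-evolution an autonomous Lipschitz field and proving the reconstruction map $q\mapsto\gamma$ order-preserving — does not arise in the paper's argument: one only needs the one-sided differential inequality \eqref{vectorfieldcomparison} along the actual $(\gamma,\rho)$ solution (whose local existence is assumed), together with local Lipschitz continuity and monotonicity of the majorant Liouville field $\tilde F(q)(r)=-Cq(r)\int_r^\infty\frac{\lvert z_0(s)\rvert/Q(s)}{q(s)}\,ds$, which is exactly what Lemma~\ref{lem:comparison} and Proposition~\ref{rhopowerslipschitzthm} supply.
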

%The above breakdown result is not limited to the family of EPDiff equations, but  it could potentially be applied to a variety of equations, assuming they satisfy a vorticity transport law and allow for a particle-trajectory formulation; examples include the Proudman-Johnson equation~\cite{okamoto2000some}, the $b$-Camassa-Holm equation~\cite{degasperis2003integrable}, and the Okamoto-Sakajo-Wunsch family of equations~\cite{okamoto2008generalization}, all of which are one-dimensional equations satisfying the analogue of the momentum transport law \eqref{momentumtransportgeneral} given by $\gamma_r(t,r)^b \omega(t,\gamma(t,r)) = \omega_0(r)$ where $\omega$ is some pseudodifferential operator on the velocity $u$ and $b$ is any real number. %; see Section \ref{sec:conclusion} for more on such equations.

%

%
%

\subsection*{Geodesic completeness of the geometric interpretation:} As a corollary of our results and the global existence results described above we obtain the following complete characterization of geodesic completeness (incompleteness, resp.) of the corresponding geometric picture for integer order Sobolev metrics in dimension three:
\begin{corollary*}[Corollary~\ref{cor:geodesiccomplete}]
The diffeomorphism group $\operatorname{Diff}(\mathbb R^3)$ equipped with the right-invariant Sobolev metric of order $k\in \mathbb N$ is geodesically complete if and only if $k\geq 3$, i.e., for any $k\geq 3$ and any initial conditions $U_0\in H^{\infty}(\mathbb R^3,\mathbb R^3)$ the solution to the geodesic equation (EPDiff equation, resp.) exists for all time $t$, whereas for any $k\in \{0,1,2\}$ there exists initial conditions $U_0\in H^{\infty}(\mathbb R^3,\mathbb R^3)$ such that the solution blows up in finite time.
\end{corollary*}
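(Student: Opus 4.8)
The plan is to combine the global existence results recalled in the introduction with the blowup theorems of this article; since the statement is an equivalence there are two directions to establish.

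\textbf{Completeness for $k\ge 3$.} In dimension $n=3$ the hypothesis $k\ge 3$ gives $k>\tfrac n2+1$, so by the discussion in the introduction the right-invariant $H^k$-metric extends to a \emph{strong} Riemannian metric on the Hilbert manifold $\operatorname{Diff}^s(\mathbb{R}^3)$ of Sobolev diffeomorphisms for every $s\ge k$. For strong right-invariant metrics of this regularity on diffeomorphism groups one has geodesic completeness (the argument of Ebin--Marsden~\cite{ebin1970groups} together with the extensions of Escher--Kolev, Michor--Mumford and others cited above, e.g.~\cite{escher2014geodesic,bauer2023regularity}), and an Ebin--Marsden no-loss-no-gain argument then upgrades this to global existence of the EPDiff flow \eqref{eq:EPDiff} for arbitrary $U_0\in H^\infty(\mathbb{R}^3,\mathbb{R}^3)$. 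Since the geodesics of the weak $H^\infty$-metric coincide with the corresponding Sobolev geodesics by regularity, this yields geodesic completeness of $(\operatorname{Diff}(\mathbb{R}^3),H^k)$ for all $k\ge 3$.

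\textbf{Incompleteness for $k\in\{0,1,2\}$.} Here I would exhibit, for each such $k$, smooth initial data whose geodesic degenerates in finite time, using the radial reduction. The building block is a radial vector field $U_0(x)=u_0(\lvert x\rvert)\,x/\lvert x\rvert$ on $\mathbb{R}^3$; one checks that $U_0\in H^\infty(\mathbb{R}^3,\mathbb{R}^3)$ precisely when $u_0$ is a smooth odd function of $r$ with sufficient decay, so that $u_0(r)/r$ extends to a smooth even rapidly-decaying function, and that among such profiles one can arrange the associated radial momentum $\omega_0$ (defined by $\omega_0\partial_r=(1-\Delta)^k(u_0\partial_r)$) to satisfy $\omega_0(r)\le 0$ for all $r\ge 0$ — a sign compatibility check on $(1-\Delta)^k$ applied to a bump-type profile. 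With such data: for $k=2$ Theorem~\ref{thm:H2blowup} and for $k=1$ Theorem~\ref{thm:H1blowup} produce a finite time $T$ and $r_0\ge 0$ with $\gamma_r(T,r_0)=0$ for the radial EPDiff flow of \eqref{mainomega}; for $k=0$ the corresponding finite-time blowup of the higher-dimensional inviscid model is due to Chae--Liu~\cite{chae2012blow} (and for $k=1$ the blowup was also obtained by Li--Yu--Zhai~\cite{li2013euler}).

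\textbf{From radial blowup to geodesic incompleteness.} Finally I would translate this into a genuine statement about $\operatorname{Diff}(\mathbb{R}^3)$. Since \eqref{mainomega} is the reduction of \eqref{eq:EPDiff} under the radial ansatz, its Lagrangian flow $\gamma$ governs the full Lagrangian map $\Gamma_t(x)=\gamma(t,\lvert x\rvert)\,x/\lvert x\rvert$, which is exactly the $H^k$-geodesic emanating from $U_0$. The identity $\gamma_r(T,r_0)=0$ forces $\det D\Gamma_t$ to vanish on the sphere $\lvert x\rvert=r_0$ as $t\to T^-$, so $\Gamma_t$ leaves $\operatorname{Diff}(\mathbb{R}^3)$; equivalently the $C^1$-norm of $U(t,\cdot)$ blows up as $t\to T^-$, which by the local well-posedness theory and its attendant blowup criterion~\cite{bauer2015local} means the solution, and hence the geodesic, ceases to exist at some time $\le T$. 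This establishes incompleteness for $k\le 2$ and, with the first part, the full equivalence.

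\textbf{Expected main obstacle.} The genuinely new analytic content lies in the blowup theorems, which we are free to invoke, so the work here is essentially bookkeeping for the reduction. The two most delicate points are: (i) verifying that the radial ansatz yields smooth, globally defined $H^\infty$ vector fields on $\mathbb{R}^3$ for which the momentum can be made nonpositive — the behavior at the origin of $U_0=u_0(r)\,x/r$ must be checked with care; and (ii) citing the correct global existence statement for the diffeomorphism group of the \emph{noncompact} manifold $\mathbb{R}^3$, where decay at infinity introduces technicalities not present in the compact setting.
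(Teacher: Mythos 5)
Your proposal is correct and follows essentially the same route as the paper: the corollary is obtained by combining the cited global existence results for $k>\tfrac{n}{2}+1$ (hence $k\ge 3$ when $n=3$) with the blowup theorems of the paper for $k\in\{1,2\}$ and the Chae--Liu result for $k=0$, together with the radial reduction and the $C^1$-blowup criterion (Lemmas~\ref{lem:radialsolutionsEPDiff} and~\ref{blowuplemma}). The only simplification worth noting is that no ``sign compatibility check'' on $(1-\Delta)^k$ is needed: one chooses a nonpositive radial momentum $\omega_0$ first and sets $U_0=(1-\Delta)^{-k}(\omega_0\,\partial_r)\in H^{\infty}(\mathbb R^3,\mathbb R^3)$, since the inertia operator is an isomorphism preserving radial fields.
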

It is likely that a similar statement is true on $\mathbb{R}^n$ and for fractional values of $k$; see Section \ref{sec:conclusion}.

\subsection{Structure of the Article}
In Section~\ref{sec:background} we present some general mathematical background. We begin with the %derivation of the 
solution of the Liouville equation, and we present the Green functions of the Laplace operator acting on radial functions. Finally we present a comparison theorem for ODEs in Banach spaces, which will be one of the main ingredients for our breakdown theorem. In Section~\ref{sec:EulerArnold} we describe the geometric picture for the EPDiff equation, derive the momentum transport law, write its particle formulation, and prove the existence of radial solutions. We also discuss the local well-posedness theory for both smooth and ``classical'' solutions along with the basic breakdown result. Next, in Section~\ref{sec:radialHS}, we derive an explicit solution formula for the radial Hunter-Saxton equation, i.e., the EPDiff equation for the $\dot{H}^1$ metric with radial initial conditions in any dimension. We then present the main breakdown result and its proof in Section~\ref{sec:general}, which we apply  to show breakdown of various EPDiff equations ($\dot{H}^2$, $H^1$, and $H^2$) in Section~\ref{sec:breakdownEPDiff}. Finally in Section~\ref{sec:conclusion} we discuss several possible avenues to generalize these results to higher-order and fractional order EPDiff equations. In Appendix \ref{greenfunctionproof} we prove the Green function formulas, and in Appendix \ref{sec:app_well} we present the local existence theory for homogeneous metrics in $C^1$ using the particle trajectory approach.

\subsection{Acknowledgements}
{The second author is grateful for the hospitality of the University of Vienna, the Wolfgang Pauli Institute, and the Erwin Schr\"odinger Institute. MB was partially supported
by NSF grants DMS-1912037 and DMS-1953244 and by FWF grant FWF-P 35813-N. }

\section{Analytic background material}\label{sec:background}
\subsection{The Liouville equation and ODE comparison theorems}
We start by presenting the explicit solution to the %
Liouville equation~\cite{liouville1853equation}, which will be an essential ingredient for the results of the article:

\begin{lemma}[Explicit Solution of the Liouville Equation]\label{lem:Liouville}
For a fixed function $z_0 \in L^1(\mathbb R_{\geq 0})$
consider
the Liouville-type equation
\begin{equation}\label{eq:Liouville}
%\frac{\partial}{\partial t}  \ln q(t,r)=F(q)(t,r), \quad t,r\ge 0, \qquad
%F(q)(\textcolor{red}{t,}r):= \int_r^\infty\frac{z_0(s)}{q(\textcolor{red}{t,}s)}\, ds.
\frac{d}{dt}  \ln q(t) = F(q(t)), \qquad
F(q)(r):= \int_r^\infty\frac{z_0(s)}{q(s)}\, ds, \qquad t,r\ge 0.
\end{equation}
Then for any $a>0$, the function  $F$ is locally Lipschitz on the space $C(\mathbb{R}_+,(a,\infty))$ of continuous functions bounded below by $a$.
Thus the equation \eqref{eq:Liouville} with boundary condition $q(0,r)=1$ has a unique solution $q$, which is given by
\begin{equation}\label{eq:Liouville_explicit}
q(t,r)=  \left(1+\frac{t}{2}\int_r^{\infty}z_0(s)ds\right)^2,
\end{equation}
defined on a maximal interval of existence $[0,T)$, where $T=\frac{2}{K}$ with
\begin{equation}
  K=\operatorname{max}\left(-\inf_{r\geq 0} \int_r^{\infty}z_0(s)ds,0\right).
\end{equation}
\end{lemma}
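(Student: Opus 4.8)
The plan is to verify each assertion in order: first local Lipschitz continuity of $F$, then existence/uniqueness via Picard on the Banach space, then verification that the explicit formula solves the equation, and finally the identification of the maximal interval of existence. For the Lipschitz claim, fix $a>0$ and work on the open set $C(\mathbb{R}_+,(a,\infty))$ inside the Banach space $C_b(\mathbb{R}_+)$ with the sup norm. Given $q,\tilde q$ bounded below by $a$, I would estimate
\begin{equation}
|F(q)(r)-F(\tilde q)(r)| = \left| \int_r^\infty z_0(s)\left(\frac{1}{q(s)}-\frac{1}{\tilde q(s)}\right)ds\right| \le \frac{1}{a^2}\,\|q-\tilde q\|_\infty \int_0^\infty |z_0(s)|\,ds,
\end{equation}
using $|1/q - 1/\tilde q| = |q-\tilde q|/(q\tilde q) \le a^{-2}|q-\tilde q|$ and $z_0 \in L^1$. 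Since the right side is independent of $r$, this gives $\|F(q)-F(\tilde q)\|_\infty \le a^{-2}\|z_0\|_{L^1}\|q-\tilde q\|_\infty$, i.e. $F$ is globally Lipschitz on each such slab (hence locally Lipschitz on $C(\mathbb{R}_+,(0,\infty))$). One should also note $F$ maps into $C_b$: continuity of $r\mapsto F(q)(r)$ follows from dominated convergence, and $\|F(q)\|_\infty \le a^{-1}\|z_0\|_{L^1}$.

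Next, existence and uniqueness: the equation is $\partial_t \ln q = F(q)$, equivalently $\partial_t q = q\,F(q)$, and the vector field $q\mapsto q F(q)$ is locally Lipschitz by the above together with the product rule for Lipschitz maps (each factor is locally Lipschitz and locally bounded on the slabs $a \le q$, with an additional upper bound needed — so one restricts to $a \le q \le b$). Standard Picard–Lindelöf on Banach spaces then yields a unique maximal solution through $q(0,\cdot)\equiv 1$. For the explicit formula, let $G(r) := \int_r^\infty z_0(s)\,ds$ and set $q(t,r) = (1 + \tfrac{t}{2}G(r))^2$. Then $\partial_t \ln q(t,r) = \frac{G(r)}{1+\frac{t}{2}G(r)}$, while the right side is $\int_r^\infty \frac{z_0(s)}{(1+\frac t2 G(s))^2}\,ds$; since $\frac{d}{ds}\bigl(1+\tfrac t2 G(s)\bigr)^{-1} = \frac{-\frac t2 G'(s)}{(1+\frac t2 G(s))^2} = \frac{\frac t2 z_0(s)}{(1+\frac t2 G(s))^2}$ (as $G' = -z_0$), the integral telescopes to $\frac{2}{t}\left(1 - \frac{1}{1+\frac t2 G(r)}\right) = \frac{G(r)}{1+\frac t2 G(r)}$, matching. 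The initial and boundary conditions $q(0,r)=1$ and $\lim_{r\to\infty}q(t,r)=1$ (using $G(r)\to 0$ since $z_0\in L^1$) are immediate, so by uniqueness this is the solution.

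Finally, the maximal interval: $q(t,r) = (1+\tfrac t2 G(r))^2$ stays in a slab $[a,b]$ with $a>0$ precisely as long as $1 + \tfrac t2 G(r) \ne 0$ for all $r$, i.e. $t < 2/(-\inf_r G(r))$ when $\inf_r G(r) < 0$, and for all $t\ge 0$ when $\inf_r G(r)\ge 0$; this is exactly $t < T = 2/K$. One direction — that the solution exists on $[0,T)$ — follows by checking that on any $[0,T-\varepsilon]$ the explicit $q$ is bounded above and below away from $0$ uniformly in $r$ (upper bound from $G$ bounded, lower bound from $t<T$), so the slab-Lipschitz theory applies; the other direction — that $T$ is genuinely maximal — follows because as $t\uparrow T$ we have $\inf_r q(t,r)\to 0$, so the solution cannot be continued within $C(\mathbb{R}_+,(0,\infty))$. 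The main obstacle is a mild one: being careful that the Banach-space Picard theorem is being applied on the correct open subset (bounded both above and below), and that the $L^1$ hypothesis on $z_0$ is what makes both the Lipschitz estimate and the boundary condition $\lim_{r\to\infty}q=1$ work; everything else is routine.
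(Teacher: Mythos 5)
Your proposal is correct, and it departs from the paper's own proof in two places worth noting. For the Lipschitz claim the paper simply invokes Proposition~\ref{rhopowerslipschitzthm} (the general estimate of Appendix~\ref{sec:app_well} for kernels of the form $r^ps^{-q}$), whereas you prove the bound $\|F(q)-F(\tilde q)\|_\infty\le a^{-2}\|z_0\|_{L^1}\|q-\tilde q\|_\infty$ by a direct computation; this is exactly the elementary special case and is perfectly adequate here (and, as you note, the product $qF(q)$ needs the additional upper bound $q\le b$, matching the paper's use of slabs). More substantively, for the explicit formula the paper follows Liouville's original derivation: it introduces $\thetafunc(t,r)=\int_r^\infty z_0(s)/q(t,s)\,ds$, shows after two integrations that $\thetafunc_t=-\tfrac12\thetafunc^2$ (assuming $z_0$ smooth for the derivation), solves this scalar ODE, reconstructs $q=-z_0/\thetafunc_r$, and only at the end remarks that the resulting formula still works for $z_0\in L^1$. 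You instead take the formula as given and verify it via the telescoping identity $\frac{d}{ds}\bigl(1+\tfrac t2 G(s)\bigr)^{-1}=\tfrac t2 z_0(s)\bigl(1+\tfrac t2 G(s)\bigr)^{-2}$, concluding by uniqueness. Your route is shorter and treats the $L^1$ hypothesis directly (modulo observing that $G$ is absolutely continuous so the fundamental theorem of calculus applies, and checking $t=0$ separately since you divide by $t$), but it does not explain where the formula comes from; the paper's derivation does, at the cost of a smoothness assumption that must be removed afterwards. Finally, your treatment of the maximal interval $T=2/K$ --- a uniform positive lower bound on $[0,T-\varepsilon]$ plus $\inf_{r}q(t,r)=(1-\tfrac t2 K)^2\to 0$ as $t\uparrow T$ --- is more explicit than the paper, which leaves this point implicit in the solution formula.
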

\begin{remark}
Taking an additional $r$ derivative of equation~\eqref{eq:Liouville}  one arrives at the classical
version of the Liouville equation
\begin{equation}\label{eq:Liouville_classical}
\frac{\partial^2}{\partial r\partial t}  \ln q(t,r)=-\frac{z_0(r)}{q(t,r)},\quad t,r\geq 0,
\end{equation}
with boundary conditions of the form
\begin{equation}
q(0,r)=1,\quad \lim_{r\to\infty} q(t,r)=1
\end{equation}
Note that these initial/boundary conditions are more specific than the situation studied in Liouville's original paper~\cite{liouville1853equation}, and our unknown function $q(t,r)$ is the reciprocal of the one he used.
\end{remark}

\begin{proof}[Proof of Lemma~\ref{lem:Liouville}]

Local Lipschitz continuity of $F$ is a special case of Proposition \ref{rhopowerslipschitzthm}.
The Lipschitz continuity of $F$ implies that the function $qF(q)$ is also Lipschitz and thus the theorem of Picard-Lindel\"of  implies the existence and uniqueness of solutions to~\eqref{eq:Liouville}---here we rewrote~\eqref{eq:Liouville} in the form
$
\frac{\partial}{\partial t}  q=q F(q)$.

The explicit solution formula can be easily checked to satisfy the differential equation and boundary conditions by direct computation. It can be derived as in Liouville's original paper~\cite{liouville1853equation},  with slight adaptations stemming from the fact that we use different boundary conditions and have $q$ in the denominator on the right side. See also~\cite{sarria2015sign} for a different derivation of this formula. 
\end{proof}

For the radial Hunter-Saxton equation, treated in Section~\ref{sec:radialHS}, the above theorem will lead to an explicit solution formula in Theorem \ref{exactsolnH1thm}, which will allow us to characterize the precise breakdown mechanism for this family of equations. For the main results, EPDiff equations for non-homogeneous and higher order operators, such an explicit solution formula will not be available.  We will, however, be able to compare the solutions of these more complicated equations to the solution of the above Liouville equation, which will then allow us to deduce finite time breakdown. Towards this aim we will need a comparison theorem for ODEs on Banach spaces. The following theorem is a special case of more general comparison theorems (see e.g.,~\cite{deimling1979existenceII, deimling1979existence}), which is tailored to the situation studied in the present article:
\begin{lemma}[Comparison theorem]\label{lem:comparison}
Let $a>0$ and let
\begin{align}
F,\tilde F:\operatorname{C}(\mathbb{R}_+,(a,\infty))\to \operatorname{C}(\mathbb{R}_+,\mathbb{R})
\end{align}
be functions satisfying the monotonicity property
\begin{equation}\label{comparisonassumption}
\tilde{v}(r) \ge v(r)\; \forall r\ge 0 \quad \Longrightarrow \quad\tilde F(\tilde v)(r)\geq F(v)(r)\; \forall r\ge 0
\end{equation}
for all functions $v,\tilde v\in \operatorname{C}(\mathbb{R}_+,(a,\infty))$. Assume in addition that $\tilde F$
is locally Lipschitz continuous when $\operatorname{C}(\mathbb{R}_+,(a,\infty))$ and $\operatorname{C}(\mathbb{R}_+,\mathbb{R})$ are equipped with the supremum norm.

Let $\tilde u:[0,\tilde T)\to \operatorname{C}(\mathbb{R}_+,(a,\infty))$ be the unique solution to $\partial_t \tilde u=\tilde F(\tilde u)$ with initial condition $\tilde u(0)=\tilde u_0$.  Let $ u:[0, T)\to \operatorname{C}(\mathbb{R}_+,(a,\infty))$  be a solution to $\partial_t u= F( u)$ with the same initial conditions, i.e., $ u(0)= u_0=\tilde u_0$. Then
\begin{equation}
\tilde u(t)(r)\geq u(t)(r),\qquad\forall t\in [0,\operatorname{min}(\tilde T, T))\text{ and } r\in \mathbb R_{+}.
\end{equation}
\end{lemma}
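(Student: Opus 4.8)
The plan is to reduce the assertion to a \emph{strict} comparison by perturbing $\tilde F$, and then pass to a limit. For $\epsilon>0$ set $\tilde F_\epsilon := \tilde F + \epsilon$, i.e. add the constant function $\epsilon$; this is again locally Lipschitz on $C(\mathbb{R}_+,(a,\infty))$ with the same local Lipschitz constants as $\tilde F$, so by Picard--Lindel\"of there is a unique solution $\tilde u_\epsilon\colon[0,\tilde T_\epsilon)\to C(\mathbb{R}_+,(a,\infty))$ of $\partial_t\tilde u_\epsilon=\tilde F_\epsilon(\tilde u_\epsilon)$ with $\tilde u_\epsilon(0)=\tilde u_0$. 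I claim that $\tilde u_\epsilon(t)(r)\ge u(t)(r)$ for all $r\ge 0$ and $t\in[0,\min(\tilde T_\epsilon,T))$; granting this, the lemma follows by letting $\epsilon\to 0^+$.

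For the claim I would argue by contradiction via a ``first bad time''. Set $m(t):=\inf_{r\ge 0}\big(\tilde u_\epsilon(t)(r)-u(t)(r)\big)$; since $t\mapsto\tilde u_\epsilon(t)-u(t)$ is a continuous (indeed $C^1$) curve in $C(\mathbb{R}_+,\mathbb{R})$, the function $m$ is continuous, and $m(0)=0$. Suppose the set of $t\in(0,\min(\tilde T_\epsilon,T))$ with $m(t)<0$ is nonempty and let $t_0\ge 0$ be its infimum; then $m(t)\ge 0$ for $0\le t<t_0$, so $m(t_0)\ge 0$, i.e. $\tilde u_\epsilon(t_0)(r)\ge u(t_0)(r)$ for all $r$. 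Apply the monotonicity hypothesis \eqref{comparisonassumption} to $v=u(t_0)$ and $\tilde v=\tilde u_\epsilon(t_0)$: it yields $\tilde F(\tilde u_\epsilon(t_0))(r)\ge F(u(t_0))(r)$ for all $r$, hence, writing $g(s):=\tilde F_\epsilon(\tilde u_\epsilon(s))-F(u(s))$,
\[
g(t_0)(r)=\tilde F(\tilde u_\epsilon(t_0))(r)-F(u(t_0))(r)+\epsilon\ \ge\ \epsilon\qquad\text{for all }r\ge 0 .
\]
The key point is that $s\mapsto g(s)$ is continuous into $C(\mathbb{R}_+,\mathbb{R})$ in the \emph{supremum} norm: for $\tilde F_\epsilon\circ\tilde u_\epsilon$ this holds because $\tilde F_\epsilon$ is (locally Lipschitz, hence) continuous and $\tilde u_\epsilon$ is continuous, while $F\circ u=\partial_t u$ is continuous precisely because $u$ is a $C^1$ solution of $\partial_t u=F(u)$. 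Thus there is $\eta>0$ with $t_0+\eta<\min(\tilde T_\epsilon,T)$ and $\|g(s)-g(t_0)\|_\infty<\epsilon/2$ for $s\in[t_0,t_0+\eta)$, so $g(s)(r)>\epsilon/2$ for all such $s$ and all $r$. Integrating $\partial_t(\tilde u_\epsilon-u)=g$ in time gives, for $t\in(t_0,t_0+\eta)$,
\[
\tilde u_\epsilon(t)(r)-u(t)(r)=\big(\tilde u_\epsilon(t_0)(r)-u(t_0)(r)\big)+\int_{t_0}^t g(s)(r)\,ds\ \ge\ \tfrac{\epsilon}{2}(t-t_0)>0
\]
uniformly in $r\ge 0$, hence $m(t)\ge\tfrac{\epsilon}{2}(t-t_0)>0$ on $(t_0,t_0+\eta)$. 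But $t_0$ is the infimum of the (nonempty) set $\{t:m(t)<0\}$ and $m(t_0)\ge 0$, so that set must accumulate at $t_0$ from the right; this contradicts $m>0$ on $(t_0,t_0+\eta)$, and the claim follows.

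Finally, for $\epsilon\to 0^+$ I would invoke continuous dependence on parameters for ODEs in Banach spaces with locally Lipschitz right-hand side: a Gr\"onwall estimate of the form $\|\tilde u_\epsilon(t)-\tilde u(t)\|_\infty\le \epsilon L^{-1}(e^{Lt}-1)$, with $L$ a Lipschitz constant of $\tilde F$ on a neighborhood of the trajectory $\{\tilde u(s):0\le s\le t\}$, shows that for each $t<\tilde T$ the solution $\tilde u_\epsilon$ exists up to time $t$ once $\epsilon$ is small and $\tilde u_\epsilon(t)\to\tilde u(t)$ in $C(\mathbb{R}_+,\mathbb{R})$. Letting $\epsilon\to 0$ in $\tilde u_\epsilon(t)(r)\ge u(t)(r)$ gives $\tilde u(t)(r)\ge u(t)(r)$ for all $t\in[0,\min(\tilde T,T))$ and $r\ge 0$. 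I expect the main obstacle to be exactly the uniformity in $r\in\mathbb{R}_+$: because $\inf_r(\tilde u_\epsilon-u)$ need not be attained, one cannot run the classical single-point comparison, and the real content is to arrange everything in the Banach space $C(\mathbb{R}_+,\mathbb{R})$ so that supremum-norm continuity of $s\mapsto g(s)$ provides the uniformization; a secondary subtlety is that $F$ itself is not assumed Lipschitz, so the needed continuity of $F\circ u$ has to be read off from the hypothesis that $u$ is a solution.
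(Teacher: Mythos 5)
Your proof is correct, but it takes a genuinely different route from the paper's. The paper sets $w=\tilde u-u$, notes that $w$ solves $\partial_t w=\tilde F(w+u)-F(u)$ with $w(0)=0$, and runs a Picard--Lindel\"of argument for this difference equation: the monotonicity \eqref{comparisonassumption} shows the Picard map preserves the closed set of bounded nonnegative functions, local Lipschitz continuity of $\tilde F$ makes it a contraction, so the unique fixed point (the actual difference) is nonnegative on a short interval, and one iterates in time. You instead perturb to $\tilde F_\epsilon=\tilde F+\epsilon$, run a first-crossing-time argument, and remove $\epsilon$ by Gr\"onwall/continuous dependence. Your key observation --- that the classical single-point comparison fails because $\inf_r(\tilde u_\epsilon-u)$ need not be attained, and that sup-norm continuity of $s\mapsto \tilde F_\epsilon(\tilde u_\epsilon(s))-F(u(s))$ restores the needed uniformity in $r$ --- is exactly the right fix, and the crossing-time contradiction is sound. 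The trade-offs: your argument is the classical comparison proof transplanted to the Banach space and needs two extra (mild) ingredients, namely that $s\mapsto F(u(s))$ is sup-norm continuous (i.e.\ you read ``solution'' as $C^1$, which you flag) and the tube/Gr\"onwall continuous-dependence estimate for $\epsilon\to 0$, which quietly requires $\tilde F$ to be defined and Lipschitz on a neighborhood of the trajectory of $\tilde u$ (in the applications one has the hypotheses for every $a>0$, so shrinking $a$ slightly covers this, just as it covers the analogous domain issue in the paper's own proof). The paper's proof avoids both the perturbation and the limiting step and only needs the integral form of the equation for $u$ (so $F\circ u$ need only be integrable rather than continuous), at the price of invoking invariance of a closed set under the contraction map and uniqueness for the difference equation.
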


\begin{proof}
Fix $u$, and for any $\tilde{u}$ define $w(t,r) = \tilde{u}(t,r) - u(t,r)$; then $w(0,r)=0$ for all $r\ge 0$, and $w$ satisfies the differential equation 
\begin{equation}\label{wdifferenceequation}
\frac{\partial w}{\partial t}(t,r) = \tilde{F}\big(w(t,r)+u(t,r)\big) - F\big(u(t,r)\big).
\end{equation}
Consider for any $w\colon [0,T]\times [0,\infty)\to [a,\infty)$ the Picard map 
$$ \Phi(w)(t,r) = \int_0^t \Big[ \tilde{F}\big(w(\tau,r)+u(\tau,r)\big) - F\big(u(\tau,r)\big) \Big]\,d\tau.$$
This map is bounded for each bounded $w$, and if $w(\tau,r)\ge 0$ for all $\tau\in [0,T]$ and $r\ge 0$, then 
$w(\tau,r)+u(\tau,r)\ge u(\tau,r)$ and thus our assumption \eqref{comparisonassumption} shows that $\Phi(w)(t,r)\ge 0$ for all $t\in [0,T]$ and $r\ge 0$. 

Thus $\Phi$ maps the closed subspace of bounded nonnegative functions on $[0,T]\times [0,\infty)$ to itself. Since 
$\tilde{F}$ is locally Lipschitz, we see that $\Phi$ is also locally Lipschitz, and thus it is a contraction mapping on a neighborhood of zero. 
For short time the fixed point of iteration exists as in the usual Picard-Lindel\"of theorem, and this fixed point must be in the closed subspace 
of bounded nonnegative functions as well. Hence indeed the unique solution $w(t,r)$ of the differential equation \eqref{wdifferenceequation} is 
nonnegative for short time, and iteratively we can conclude that it is nonnegative for all time that the solution exists.
\end{proof}

\subsection{Green functions for Laplace operators acting on radial vector fields}
The analysis of this article will largely build upon the existence of radial solutions to the (higher-dimensional) EPDiff equation, cf. Lemma~\ref{lem:radialsolutionsEPDiff}. This will essentially allow us to reduce all estimates to estimates on  functions of one variable only. In this section we will collect several formulas for the Green functions of the inertia operator $A=(\aconst\operatorname{id}-\Delta)^k$ acting on radial vector fields.%

Therefore let $U = u(r) \, \partial_r$ be a a smooth radial velocity field. The vector Laplacian in radial coordinates acting on $U$ is given by
\begin{equation}\label{eq:vectorLaplace}
\Delta U = \Big( u''(r) + \frac{n-1}{r}\, u'(r) - \frac{n-1}{r^2} \, u(r)\Big) \, \partial_r.
\end{equation}%
{
To invert this operator we will need to impose certain decay conditions for $r$ towards infinity:
for $\power\in\mathbb{R}$, we let $\decayspace_{\power}$ denote the space of functions that decay at infinity like a power $r^{-\power}$, i.e.,
$$ \decayspace_{\power}(\mathbb R_{\geq 0},\mathbb R) = \{ u\colon \mathbb R_{\geq 0} \to \mathbb{R} \, \vert \, \limsup_{r\to\infty} r^{\power} \lvert f(r)\rvert < \infty\}.$$
In addition to this decay conditon we will assume that the vector fields $u(r)\,\partial_r$ can be extended to smooth vector fields on $\mathbb{R}^n$, which will lead to the condition that all even derivatives at zero vanish. Thus we let 
\begin{equation*}
W^{m,1}_{\text{odd}}(\mathbb R_{\geq 0},\mathbb R):=\left\{u\in W^{m,1}(\mathbb R,\mathbb R)\, \vert \, u^{(2k)}(0)=0,\; 0\leq 2k< m \rfloor\right\},
\end{equation*}
where $W^{m,1}$ denotes the Sobolev space of order $m$, i.e., all functions on $\mathbb{R}$ that have $(m-1)$ absolutely continuous derivatives (so that the $m^{\text{th}}$ derivative exists almost everywhere and is locally in $L^1$). Using these definitions we consider the intersection of those two spaces as  domain for the radial Laplace operator, i.e., we consider the space
\begin{equation}\label{Qspacedef}
\mathcal{Q}^m_{\power}(\mathbb R_{\geq 0},\mathbb R) = W^{m,1}_{\text{odd}}(\mathbb R_{\geq 0},\mathbb R)\cap \decayspace_{\power}(\mathbb R_{\geq 0},\mathbb R).
\end{equation}
We obtain the following result concerning the corresponding Green's functions:}
\begin{lemma}\label{lem:greensfunctionhomog}
{Let $k\in \mathbb N$, and $2(k-1)<\power<n$. Given any $\omega\in \mathcal{Q}^0_{2k+\power-1}(\mathbb R_{\geq 0},\mathbb R)$, there is a unique solution $u\in \mathcal{Q}^{2k}_{\power-1}(\mathbb R_{\geq 0},\mathbb R)$ of 
$(-\Delta)^k\big(u(r)\partial_r\big) = \omega(r)\,\partial_r$. }

{
It is given by
\begin{equation}\label{generalwsoln}
u(r) = \int_0^r \delta(s,r) s^{n-1} \omega(s) \,ds + \int_r^{\infty} \delta(r,s) s^{n-1} \omega(s) \,ds,\qquad \delta(r,s) := rs \varphi(r,s), 
\end{equation}
where the function $\varphi(r,s)$ is defined on $D = \{(r,s) \, \vert s\ge r\ge 0\} \backslash \{(0,0)\}$. For $k=1$ and $k=2$ it is given explicitly via  
\begin{itemize}
\item $k=1$ and $n\geq 1:$ \begin{equation}\label{H1dotinversion}
\varphi(r,s) = \tfrac{1}{n} s^{-n}.
\end{equation}
\item $k=2$ and $n\ge 3:$
\begin{equation}\label{H2dotinversion}
\varphi(r,s) = \tfrac{1}{2n(n-2)} s^{2-n} - \tfrac{1}{2n(n+2)} r^2 s^{-n}.
\end{equation}
\end{itemize}}
\end{lemma}
{
In the non-homogeneous situation, i.e., for $A=(1-\Delta)^k$, the situation is significantly easier, as we do not have to keep track of the decay conditions for $r\to \infty$. To further simplify the situation, we will restrict ourselves to the smooth category and consider the space of odd $H^{\infty}$-functions:
\begin{equation*}
H^{\infty}_{\text{odd}}(\mathbb R_{\geq 0},\mathbb R):=\left\{u\in H^{\infty}(\mathbb R_{\geq 0},\mathbb R)\, \vert \, u^{(2k)}(0)=0,\; \forall k\in\mathbb N\right\};,
\end{equation*}
In this setting we obtain the following result concerning the Green's function for $(1-\Delta)^k$:}
\begin{lemma}\label{lem:greensfunctionnonhomog}
{
For any $\omega\in H^{\infty}_{\text{odd}}(\mathbb R_{\geq 0},\mathbb R)$ and positive integer $k$, there is a unique $u\in H^{\infty}_{\text{odd}}(\mathbb R_{\geq 0},\mathbb R)$ such that 
$$ (1-\Delta)^k(u(r)\,\partial_r) = \omega(r)\,\partial_r.$$
This solution can be written in the same form 
\eqref{generalwsoln} as in Lemma~\ref{lem:greensfunctionhomog}, where now the function $\varphi$ is given on $D$ by
\begin{itemize}
\item $k=1$ and $n\geq 1:$
\begin{equation}\label{H1fullinversion}
\varphi(r,s) = \funca_n(r) \funcb_n(s).
\end{equation}
\item $k=2$ and $n\geq 3:$
\begin{equation}\label{H2fullinversion}
\varphi(r,s) = \tfrac{1}{2}\big[ n\funca_n(r) \funcb_n(s) + \tfrac{1}{n} \funca_n(r) \funcb_{n-2}(s) - n\funca_{n-2}(r) \funcb_n(s)\big].
\end{equation}
\end{itemize}
Here the functions $\funca_p$ and $\funcb_p$ are defined for $p\in\mathbb{R}$ by
\begin{equation}\label{besseldef}
\funca_p(r) := \constyp r^{-p/2} I_{p/2}(r), \qquad \funcb_p(r) := \constyp^{-1} r^{-p/2} K_{p/2}(r),\qquad \constyp := 2^{p/2} \Gamma(\tfrac{p}{2}+1),
\end{equation} }
\end{lemma}
We remind the reader that $\Delta$ is the vector Laplacian; the formulas would be different if we were using the Laplacian on functions. We give the proofs of both Lemmas \ref{lem:greensfunctionhomog} and \ref{lem:greensfunctionnonhomog} in Appendix \ref{greenfunctionproof}.

\begin{remark}[Homogeneous metrics]\label{rem:hom_metrics}
 {Note that the operator $(1-\Delta)$ is invertible as an operator $H^{k+2}(\mathbb{R}^n)$ to $H^{k}(\mathbb{R}^n)$. Furthermore, the operator $(1-\Delta)^{-1}$ makes sense and maps the space of rapidly decaying smooth functions to itself, and can be iterated any number of times. Thus there is no issue with $k=2$ and $n=1,2$ or even any larger integer $k$. On the other hand, as we have seen in Lemma~\ref{lem:greensfunctionhomog}, the homogeneous operator $(-\Delta)$ behaves quite differently: even for a rapidly decaying, radial, smooth vector field $\omega(r)\,\partial_r$, it is not true that the solution of $\Delta(u(r)\,\partial_r) = -\omega(r)\,\partial_r$ is also rapidly decaying. In the above formulas we take advantage of the fact that the slow decay of functions in $\mathbb{R}^n$ for $n\ge 3$ is still fast enough to invert the Laplacian twice, and thus work only with $n\ge 3$ when dealing with the $\dot{H}^2$ metric. See Appendix \ref{greenfunctionproof} for the computations which justify this statement. }

{
 Furthermore, it is important to note that, although we require a power-law decay condition in order to justify that $u$ is the unique solution of $(-\Delta)^k(u\partial_r) = \omega\partial_r$, once we obtain the formula \eqref{generalwsoln}, it makes sense for $\omega$ under milder conditions, in particular 
\begin{equation}\label{minimalomegaintegral}
\int_0^{\infty} r^{2k} \lvert \omega(r)\rvert \,dr < \infty,
\end{equation}
as can be shown iteratively. We will use this fact in Appendix \ref{sec:app_well} to establish a local existence result with minimal assumptions on the initial data $\omega_0$, which includes for example delta functions.}
\end{remark}

\section{The general EPDiff equation %
}\label{sec:EulerArnold}
\subsection{Euler-Arnold equation}\label{sec:EulerArnoldgeneral}
In this part we will recall the derivation of the EPDiff equation as an Euler-Arnold equation on the diffeomorphism group. We will follow the presentation in~\cite{bauer2015local}; see also~\cite{misiolek2010fredholm,vizman2008geodesic} and the references therein.

We will focus the presentation on the non-homogeneous inertia operator, i.e., $ A=\A$ with $\aconst>0$, and only comment on the homogeneous case ($\aconst=0$) at the end of this section, cf. Remark~\ref{rem:hom_metrics}. To derive the EPDiff equation as a geodesic equation we consider the group of $H^{\infty}$-diffeomorphisms on $\R^n$:
\begin{equation*}
\Diff(\R^n):=\left\{\eta=\operatorname{id}+f:f\in H^{\infty}(\R^n,\R^n)\text{ and } \operatorname{det}(\operatorname{id}+df)>0 \right\},
\end{equation*}
where
\begin{equation}
    H^{\infty}(\R^n,\R^n)=\bigcap_{q\geq 0}H^{q}(\R^n,\R^n)
\end{equation}
is the intersection of all Sobolev spaces. The space $\Diff(\R^n)$ is a regular Fr\'echet Lie group with Lie algebra the set of all $H^{\infty}$ vector fields, i.e., $T_e\Diff(\R^n)=\mathfrak X_{H^{\infty}}(\R^n)=H^{\infty}(\R^n,\R^n)$, see~\cite{djebali2009existence}. To define a right invariant metric we have to describe an inner product on
$H^{\infty}(\R^n,\R^n)$. The simplest inner product on this space is the $L^2$-product given by
\begin{equation*}
\langle U_1,U_2 \rangle_{L^2}=\int_{\R^n} U_1\cdot U_2\; dx,
\end{equation*}
where $U_1,U_2\in H^{\infty}(\R^n,\R^n)$ and where $\cdot$ denotes the Euclidean scalar product on $\R^n$. Using the operator $A$ we can define a Sobolev $H^k$ inner product via
\begin{equation*}
\langle U_1,U_2 \rangle_{H^k}=\int_{\R^n} A U_1\cdot U_2\; dx=
\int_{\R^n} \A U_1\cdot U_2\; dx\;,
\end{equation*}
where the symmetry of this inner product follows from the corresponding property of the Laplacian. Given an inner product on the Lie algebra, we can extend this to a right-invariant metric on the diffeomorphism group via right translation, i.e.,
\begin{equation}\label{eq:Hkmetric}
g^k_{\eta}(\delta\eta_1,\delta\eta_2)=  \langle \delta\eta_1\circ\eta^{-1},\delta\eta_2\circ\eta^{-1} \rangle_{H^k}, \qquad \delta\eta_1,\delta\eta_2\in T_{\eta}\Diff(\R^n)
\end{equation}
Using this Riemannian metric we can define the ``kinetic energy'' of a path of diffeomorphisms $\eta:[0,1]\to\Diff(\R^n)$ via
\begin{equation*}
E(\eta) = \frac12\int_0^1 g^k_{\eta}(\dot \eta,\dot \eta) dt.
\end{equation*}
Curves that minimize the energy between fixed endpoints
are called
``geodesic curves,'' and the first order optimality condition
$dE=0$ gives rise to the geodesic equation. For a right-invariant metric on a Lie group, it is convenient to introduce the Eulerian velocity $U(t,x)=\partial_t\eta(t,\eta^{-1}(t,x))$.
Using this change of coordinates the geodesic equation can be written as
\begin{equation}\label{generalgeodesic}
  \partial_t\eta(t,x) = U(t,\eta(t,x)), \qquad U_t(t,x) + \ad(U(t,x))^{\top}U(t,x) = 0,
\end{equation}
where
$\ad(U)^{\top}$ is the adjoint of the operator $\ad(U)$, with respect to the inner product $\langle\cdot,\cdot\rangle_{H^k}$. This form of the geodesic equations on Lie groups was originally derived in the finite-dimensional setting and was extended
by Arnold~\cite{arnold2014differential} to diffeomorphism groups (and more generally to Fr\'{e}chet-Lie groups). For this reason
the first order equation on $U$ is called the Euler-Arnold equation. %
Following this approach for the $H^k$-inner product on $\Diff(\R^n)$, one obtains exactly the EPDiff equation~\eqref{eq:EPDiff} with inertia operator $A=\A$; see e.g.,~\cite{misiolek2010fredholm,bauer2015local}.
As a consequence of this geometric interpretation we obtain a conserved quantity, the Riemannian energy:
\begin{corollary}
Let $U:[0,T)\to C^{\infty}(\R^n,\R^n)$ be a solution to the EPDiff equation~\eqref{eq:EPDiff} with inertia operator $\A$. Then the instantaneous Riemannian energy is constant over time, i.e.,
\begin{equation}
\langle U(t),U(t) \rangle_{H^k}=\operatorname{const}.
\end{equation}
\end{corollary}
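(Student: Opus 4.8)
The claim is that for a solution $U$ of the EPDiff equation with inertia operator $A = (\sigma - \Delta)^k$, the Riemannian energy $\langle U(t), U(t)\rangle_{H^k}$ is constant in $t$. Since this metric is the right-invariant Riemannian metric on $\Diff(\R^n)$ whose geodesic equation is exactly the EPDiff equation (by Proposition~\ref{thm:EPDiff}), the statement is the standard fact that kinetic energy is conserved along geodesics of a Riemannian metric. The cleanest route, however, is a direct PDE computation: differentiate $\frac12\langle U(t), U(t)\rangle_{H^k} = \frac12\int_{\R^n} AU \cdot U\, dx$ in time and show the result vanishes using the EPDiff equation itself.

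First I would write $\frac{d}{dt}\,\frac12\langle U, U\rangle_{H^k} = \langle U_t, U\rangle_{H^k} = \int_{\R^n} A U_t \cdot U\, dx$, using symmetry of $A$ (which follows from symmetry of $\Delta$, as noted in the definition of the inner product). Next I would substitute the EPDiff equation in the form $\Omega_t = -\nabla_U\Omega - (\nabla U)^T\Omega - (\operatorname{div} U)\Omega$ where $\Omega = AU$, giving
\begin{equation}\label{eq:energyderiv}
\frac{d}{dt}\,\tfrac12\langle U, U\rangle_{H^k} = -\int_{\R^n}\Big( \nabla_U\Omega + (\nabla U)^T\Omega + (\operatorname{div} U)\Omega\Big)\cdot U\, dx.
\end{equation}
The key observation is that the integrand on the right is exactly $\big(\ad(U)^\top U\big)\cdot AU$ — indeed, the formula for $\ad(V)^\top U$ derived in the proof of Proposition~\ref{thm:EPDiff}, specialized to $V = U$, shows that $A\,\ad(U)^\top U = \nabla_U\Omega + (\nabla U)^T\Omega + (\operatorname{div} U)\Omega$. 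Hence the right side of~\eqref{eq:energyderiv} equals $-\langle \ad(U)^\top U, U\rangle_{H^k} = -\langle U, \ad(U)U\rangle_{H^k}$. But $\ad(U)U = -[U,U] = 0$, so the derivative vanishes and the energy is constant.

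Alternatively, and perhaps more transparently, one can skip the explicit formulas and simply pair the Euler-Arnold equation $U_t + \ad(U)^\top U = 0$ from~\eqref{generalgeodesic} with $U$ in the $H^k$ inner product: $\langle U_t, U\rangle_{H^k} = -\langle \ad(U)^\top U, U\rangle_{H^k} = -\langle U, \ad(U)U\rangle_{H^k} = 0$ by antisymmetry of the bracket. This is the abstract argument, valid for any Euler-Arnold equation, and it immediately gives $\frac{d}{dt}\langle U(t),U(t)\rangle_{H^k} = 0$.

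**Main obstacle.** There is essentially no analytic obstacle here: the only subtlety is justifying the integration by parts and the interchange of $\frac{d}{dt}$ with the integral, which is immediate because $U(t)\in C^\infty(\R^n,\R^n)$ decays rapidly (being $H^\infty$), so all boundary terms vanish and differentiation under the integral sign is legitimate on the interval of existence. One should also note that for $\sigma > 0$ the operator $A$ is invertible on $H^\infty$, so passing between the $U$-form and the $\Omega$-form of the equation causes no trouble; for $\sigma = 0$ the same computation goes through using the homogeneous inner product as discussed in Remark~\ref{rem:hom_metrics}. Thus the proof is a two-line computation once the formula for $\ad(U)^\top$ from Proposition~\ref{thm:EPDiff} is invoked.
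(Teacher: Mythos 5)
Your proof is correct and follows essentially the route the paper has in mind: the paper states the corollary as an immediate consequence of the geodesic (Euler-Arnold) interpretation without writing out a proof, and your abstract argument $\frac{d}{dt}\tfrac12\langle U,U\rangle_{H^k}=-\langle \ad(U)^{\top}U,U\rangle_{H^k}=-\langle U,\ad(U)U\rangle_{H^k}=0$ is exactly that standard conservation-of-energy-along-geodesics argument, with your direct PDE computation serving as an explicit verification of the same identity.
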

The general geodesic equation \eqref{generalgeodesic} implies the momentum conservation law 
\begin{equation*}
\frac{d}{dt} \big(\Ad_{\eta(t)}^{\top} U(t)\big) = 0,
\end{equation*}
relating the flow $\eta(t)$ to the velocity field $U(t)$.
This can be integrated using $\eta(0)=\id$ and $U(0)=U_0$ to obtain 
\begin{equation}\label{vorticityconservation}
\Ad_{\eta(t)}^{\top} U(t) = U_0.
\end{equation}
We may then eliminate $U(t)$ in \eqref{generalgeodesic} to get an equation directly on the group given by 
\begin{equation}\label{groupODE}
\frac{d\eta}{dt} = U(t)\circ\eta(t) = \Ad_{\eta(t)^{-1}}^{\top} U_0\circ\eta(t), \qquad \eta(0)=\id.
\end{equation}
Explicitly, on the diffeomorphism group with inertia operator $A=\A$, this takes the form (see \cite{misiolek2010fredholm}, Proposition 3.6)
$$ \Ad_{\eta}^{\top} U_0 = A^{-1}\big[ \Jac(\eta) D\eta^T (AU_0)\big].$$
If $A$ is a sufficiently strong differential operator---$k\ge 1$ is sufficient---then \eqref{groupODE} has a smooth right side as a function of $\eta$ in the Sobolev space $H^s$ for large $s$, and we can prove local well-posedness using Picard iteration, for any fixed $U_0\in H^s$. 
When applied to the Euler equations (on the volume-preserving diffeomorphism group), this technique is referred to as 
the \emph{particle-trajectory method} in Majda-Bertozzi~\cite{majda_bertozzi_2001}, see also ~\cite{ebin1984concise}.  A derivation on abstract Lie groups can be found in~\cite[Lemma~5]{bauer2016geometric}.  

We will apply this method to not only study local well-posedness but also global existence, in the special case that 
$U_0$ is a purely radial vector field $U_0 = u_0(r) \, \partial_r$, and thus $\eta(t)$ is a purely radial diffeomorphism,
with $\eta(t,x) = \gamma(t,r) x/r$ for $x\in \mathbb{R}^n$ and $r=\lvert x\rvert$.

\subsection{Local ODE existence and radial solutions}
As a consequence of the geometric description in the previous section, a series of local and global well-posedness results for the EPDiff equation (depending on the order $k$) have been obtained~\cite{ebin1970groups,misiolek2010fredholm,gay2009well,mumford2013euler}. In the following theorem we summarize these results for initial data in $H^{\infty}(\R^n,\R^n)$:
\begin{proposition}[Local and global well-posedness]\label{prop:wellposedness}
Let $A=(1-\Delta)^k$.  For any $k\geq 1$ and
$U_{0}\in H^{\infty}(\R^n,\R^n)$, the EPDiff equation~\eqref{eq:EPDiff} has a unique non-extendable smooth solution
\begin{equation*}
  U\in C^\infty(J,H^{\infty}(\R^n,\R^n)).
\end{equation*}
The maximal interval of existence $J$ is open and contains $0$.
If $k>\frac{n}{2}+1$, then the solution exists for all time $t$, i.e., $J=\R$.
\end{proposition}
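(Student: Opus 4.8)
The plan is to solve the EPDiff equation~\eqref{eq:EPDiff} not directly for $U$ but via the group ODE~\eqref{groupODE},
\[
\frac{d\eta}{dt} = \big(\Ad_{\eta^{-1}}^{\top} U_0\big)\circ\eta =: F_{U_0}(\eta), \qquad \eta(0)=\id,
\]
on the Hilbert manifold $\Diff^s(\R^n)$ of Sobolev-class diffeomorphisms, for a fixed $s>\tfrac n2+1$, using the explicit formula $\Ad_{\eta}^{\top} U_0 = A^{-1}\big[\Jac(\eta)\, D\eta^{T}(AU_0)\big]$ with $A=\A$. The central step is to verify that $F_{U_0}$ is a $C^{\infty}$ vector field on $\Diff^s(\R^n)$. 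Here I would use crucially that $U_0\in H^{\infty}$, so that $AU_0$ is again smooth: the operations that are only continuous, not smooth, on a fixed Sobolev space---the inversion $\eta\mapsto\eta^{-1}$ and the right-composition operators $v\mapsto v\circ\eta$---act here only on the \emph{fixed smooth} field $AU_0$ and so are harmless, while the single derivative lost to the factor $\Jac(\eta)\,D\eta^{T}$ is recovered by $A^{-1}$, which gains $2k\ge2$ derivatives. This is the Ebin--Marsden (particle-trajectory) mechanism; I would follow \cite{ebin1970groups,misiolek2010fredholm,bauer2016geometric}. Granting the smoothness of $F_{U_0}$, the Picard--Lindel\"of theorem on Banach manifolds yields a unique non-extendable solution $\eta\in C^{\infty}(J_s,\Diff^s(\R^n))$ on an open interval $J_s\ni 0$, and $U(t):=\partial_t\eta(t)\circ\eta(t)^{-1}$ then solves~\eqref{eq:EPDiff} and is unique among $H^s$-solutions.

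Next I would run a no-loss-no-gain argument to pass from $H^s$ to $H^{\infty}$ on one common interval. Since $U_0\in H^{\infty}\subset H^s$ for every $s>\tfrac n2+1$, we get maximal intervals $J_s$ with $J_{s'}\subseteq J_s$ for $s'\ge s$. The input is the basic blowup criterion---an $H^s$-solution can be continued as long as $\|U(t)\|_{C^1}$ stays finite, a bound that does not involve $s$---so all the $J_s$ coincide with $J:=\bigcap_s J_s$, which is open and contains $0$. On $J$ the flow $\eta$ lies in $\Diff(\R^n)=\bigcap_s\Diff^s(\R^n)$, is smooth in $t$ (bootstrapping from the ODE, whose right side is smooth into every $H^s$), and hence $U=\partial_t\eta\circ\eta^{-1}\in C^{\infty}(J,H^{\infty}(\R^n,\R^n))$; uniqueness across scales is inherited from uniqueness in each $H^s$.

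For the global statement I would invoke energy conservation. When $k>\tfrac n2+1$ the Sobolev embedding $H^k(\R^n)\hookrightarrow C^1(\R^n)$ holds, and since $\aconst>0$ the norm $\langle\cdot,\cdot\rangle_{H^k}^{1/2}$ defined by $A$ is equivalent to the standard $H^k$-norm; the conserved Riemannian energy then gives $\|U(t)\|_{C^1}\lesssim\|U(t)\|_{H^k}=\|U_0\|_{H^k}$ for all $t$ in the interval of existence, so by the blowup criterion that interval must be all of $\R$, i.e., $J=\R$.

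The main obstacle is the first step: making rigorous the assertion that $F_{U_0}$ is $C^{\infty}$ (or at least $C^1$) as a map between the relevant Sobolev spaces, that is, that the $2k$ derivatives gained by $A^{-1}$ precisely compensate the derivative lost to $\Jac(\eta)\,D\eta^{T}$ and that composing with the fixed smooth field $AU_0$ costs nothing. The remaining ingredients---Picard iteration, the no-loss-no-gain bootstrap, and the global-existence argument from energy conservation and Sobolev embedding---are then standard, if somewhat lengthy.
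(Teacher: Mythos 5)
Your outline is correct and follows essentially the same route the paper relies on: the paper does not prove Proposition~\ref{prop:wellposedness} itself but cites the Ebin--Marsden/particle-trajectory literature, i.e.\ exactly the argument you sketch — smoothness of the group ODE $\frac{d\eta}{dt}=\big(\Ad_{\eta^{-1}}^{\top}U_0\big)\circ\eta$ on $\Diff^s(\R^n)$ via the smoothing of $A^{-1}$, Picard iteration, a no-loss-no-gain passage to $H^\infty$, and global existence for $k>\tfrac n2+1$ from energy conservation plus the embedding $H^k\hookrightarrow C^1$. The one step you rightly flag as the main obstacle (smoothness of $F_{U_0}$, which in the references is handled by showing smoothness of the conjugated operator family $\eta\mapsto R_\eta\circ A^{-1}\circ R_{\eta^{-1}}$ rather than only of compositions with the fixed field $AU_0$) is precisely what the cited works \cite{ebin1970groups,misiolek2010fredholm,bauer2016geometric} supply.
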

\begin{remark}[Fractional orders]
In~\cite{escher2014right,bauer2015local} the above result has been extended to fractional order EPDiff equations, i.e., to $k\in \R$, where it has been shown that  the local well-posedness result holds for any $k\geq \frac{1}{2}$ and the global well-posedness result for $k>\frac{n}{2}+1$.
\end{remark}
\begin{remark}[Setting for wellposedness results]
By the above result, Proposition~\ref{prop:wellposedness},
we have local existence for  the nonhomogeneous $H^k$ metric and $\omega_0$ in $H^{\infty}$. For the homogeneous $\dot{H}^k$ case  and $\omega_0\in L^1$ (and thus $\gamma$ being barely $C^1$) we show local well-posedness in Appendix~\ref{sec:app_well}.  With additional work, we could prove the opposite---local existence in $H^{\infty}$ for homogeneous $\dot{H}^k$ metrics, and local existence in $C^1$ for nonhomogeneous $H^k$ metrics---but this would take us too far afield. In either case a solution $\gamma$ must be at least $C^1$ spatially to be considered a classical solution, and so our result that the solution must fail to be globally $C^1$ applies regardless of how a local solution is obtained or what the initial data is.
\end{remark}

In the next result, which will be essential in the remainder of the article, we observe the existence of radial solutions to the EPDiff equation.
\begin{lemma}[Radial Solutions]\label{lem:radialsolutionsEPDiff}
Let $U_0$ be a radial initial velocity, i.e., $U_0=u_0(r)\partial_r$. Let $U$ be the corresponding solution to the EPDiff equation~\eqref{eq:EPDiff} defined on its maximal interval of existence $J$. Then $U$ is a radial velocity field for each $t\in J$. The radial function $u$ satisfies the corresponding radial EPDiff equation given by \eqref{mainomega}.
%\begin{equation}\label{radial_EPDiff}
% \omega_t + u \omega_r + 2u_r \omega + \tfrac{n-1}{r} u \omega = 0, \qquad \omega(r)\,\partial_r = (\aconst - \Delta)^k\big( %u(r)\,\partial_r\big).
%\end{equation}
\end{lemma}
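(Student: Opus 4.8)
The plan is to exploit the equivariance of the EPDiff equation under the rotation group $SO(n)$ together with uniqueness of solutions. First I would observe that the inertia operator $A = (\sigma - \Delta)^k$ commutes with the pushforward action of any isometry of $\mathbb{R}^n$; in particular, for $R \in SO(n)$ and a vector field $V$, writing $R_*V$ for the pushforward, we have $A(R_*V) = R_*(AV)$. This is because $\Delta$ is the (flat) vector Laplacian, which is intrinsically defined from the Euclidean metric, and $\sigma\,\mathrm{id}$ is obviously equivariant. Next I would note that each of the first-order terms appearing in the Euler--Arnold operator $\ad(U)^\top U$ — namely $\nabla_U \Omega$, $(\nabla U)^T \Omega$, and $\operatorname{div}(U)\,\Omega$ — is built from Riemannian-invariant operations (covariant derivative, metric transpose, divergence), and therefore the whole right-hand side of \eqref{eq:EPDiff} is natural with respect to the $SO(n)$-action. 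More conceptually: the $H^k$ metric \eqref{eq:Hkmetric} is invariant under conjugation by rotations (since $A$ is), so the geodesic flow commutes with the induced action on $\mathrm{Diff}(\mathbb{R}^n)$.

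The key step is then a symmetry-and-uniqueness argument. Suppose $U(t,\cdot)$ solves \eqref{eq:EPDiff} with radial initial data $U_0 = u_0(r)\,\partial_r$. For any $R \in SO(n)$, the pushforward $R_*U(t,\cdot)$ is again a solution of \eqref{eq:EPDiff}, by the equivariance just discussed; and since $U_0$ is radial, $R_*U_0 = U_0$. By the uniqueness part of Proposition~\ref{prop:wellposedness} (local well-posedness via Picard iteration on $H^s$), we conclude $R_*U(t,\cdot) = U(t,\cdot)$ for all $t\in J$ and all $R \in SO(n)$. A vector field on $\mathbb{R}^n$ invariant under all rotations must be of the form $u(t,r)\,\partial_r$ for some scalar function $u$ (this is a standard fact: at each point $x$, the isotropy subgroup fixing $x$ acts on $T_x\mathbb{R}^n$ with the line $\mathbb{R}x$ as its only invariant subspace of vectors fixed pointwise, forcing the field to be radial). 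Hence $U(t,\cdot)$ is radial for each $t$.

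Finally, I would substitute the ansatz $U = u(t,r)\,\partial_r$ into \eqref{eq:EPDiff} and compute. Using \eqref{vectorlaplacian} for the radial vector Laplacian gives $\Omega = \omega(t,r)\,\partial_r$ with $\omega\,\partial_r = (\sigma-\Delta)^k(u\,\partial_r)$; then one computes $\nabla_U\Omega$, $(\nabla U)^T\Omega$, and $\operatorname{div}(U)\Omega$ for radial fields, where $\operatorname{div}(u\,\partial_r) = u_r + \frac{n-1}{r}u$. Assembling the terms yields $\omega_t + u\omega_r + 2u_r\omega + \frac{n-1}{r}u\omega = 0$, which is exactly \eqref{mainomega}. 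This last computation is routine; the only mild subtlety is carefully identifying the $(\nabla U)^T\Omega$ term (which contributes $u_r\,\omega$, combining with the $u_r\,\omega$ already coming from $\nabla_U\Omega$ to give the factor $2u_r$) and confirming that the $\operatorname{div}$ term supplies the $\frac{n-1}{r}u\omega$ contribution after cancellation with parts of the transport term.

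The main obstacle is not conceptual but bookkeeping: verifying carefully that every operation on the right-hand side of \eqref{eq:EPDiff} really is $SO(n)$-equivariant when written in the non-intrinsic Cartesian form of \eqref{eq:EPDiff}, and — in the final substitution — keeping track of which radial terms combine, since the ``vector'' nature of $\Delta$ (as opposed to the scalar Laplacian, a point the authors stress right after \eqref{vectorlaplacian}) means one cannot blindly reuse scalar identities. One should also remark that the argument as stated uses uniqueness, which holds in the smooth category by Proposition~\ref{prop:wellposedness}; for the homogeneous $\dot H^k$ case the same reasoning applies using the local well-posedness established in Appendix~\ref{sec:app_well}.
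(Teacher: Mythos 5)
Your overall strategy (equivariance of the flow under isometries plus uniqueness, followed by the routine substitution of the radial ansatz) is sound, and it is a slightly different route from the paper's own proof, which is the shorter invariant-subspace observation: since $\A$ preserves radial velocity fields, plugging $U=u(t,r)\,\partial_r$ into \eqref{eq:EPDiff} closes up within the class of radial fields and yields exactly \eqref{mainomega}; radiality of the solution then persists by uniqueness of solutions of the full equation. Your final computation, including the bookkeeping that produces $2u_r\omega$ and $\tfrac{n-1}{r}u\omega$, is exactly what the paper intends by ``plugging in the ansatz.''

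There is, however, one step in your symmetry argument that fails in low dimensions: it is not true that a vector field invariant under all of $SO(n)$ must be of the form $u(r)\,\partial_r$ when $n=2$. The isotropy subgroup in $SO(2)$ of a point $x\neq 0$ is trivial, so it fixes every vector of $T_x\mathbb{R}^2$; concretely, the angular field $-y\,\partial_x+x\,\partial_y$ is $SO(2)$-invariant but not radial. Your isotropy argument is valid only for $n\ge 3$, where the stabilizer $SO(n-1)$ has no nonzero fixed vectors on $x^{\perp}$. Since the lemma is stated for all $n$ and the paper does use radial (odd, in $n=1$) solutions in dimensions $n\le 2$ for the $k=1$ results, you should strengthen the symmetry group to the full orthogonal group $O(n)$: reflections are also isometries, $\A$ and the transport terms are equivariant under them as well, and $O(n-1)$-isotropy forces the fixed vector at $x$ to lie on $\mathbb{R}x$ for every $n\ge 2$, with $n=1$ handled by invariance under $x\mapsto -x$, i.e.\ oddness. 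With that replacement your argument goes through. A smaller caveat: your closing remark that the homogeneous case is covered by the local well-posedness of Appendix~\ref{sec:app_well} does not quite serve your purpose, since that appendix only gives existence and uniqueness for the radial particle system, not uniqueness of the full $n$-dimensional PDE among non-radial competitors, which is what the equivariance-plus-uniqueness argument requires; the lemma as stated presupposes the solution framework for the full equation, i.e.\ the setting of Proposition~\ref{prop:wellposedness}.
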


\begin{proof}
This follows directly by plugging the ansatz  $U=u(t,r)\partial_r$ into~\eqref{eq:EPDiff}, using the fact that
$\A$ preserves radial velocity fields.
\end{proof}

The momentum form of the Euler-Arnold equation given by \eqref{mainomega} implies a transport law which will be crucial for the rest of the paper. To derive it we define the radial flow map $\gamma(t,r)$ by the flow equation 
\begin{equation}\label{radialflow}
\frac{\partial \gamma}{\partial t}(t,r) = u\big(t, \gamma(t,r)\big), \qquad \gamma(0,r)=r,
\end{equation}
which is just \eqref{generalgeodesic} specialized to the radial case.

\begin{lemma}\label{lem:particletrac}
If $U(t,r) = u(t,r)\,\partial_r$ and $\omega(t,r)$ solve  the radial EPDiff equation \eqref{mainomega} on a time interval $[0,T)$ for all $r\ge 0$, with the flow $\gamma(t,r)$ defined by \eqref{radialflow}, then for all $t\in [0,T)$ and $r\in [0,\infty)$ we have the transport law
\begin{equation}\label{vorticitytransportm}
\gamma(t,r)^{n-1} \gamma_r(t,r)^2 \omega\big(t, \gamma(t,r)\big) = r^{n-1} \omega_0(r).
\end{equation}
\end{lemma}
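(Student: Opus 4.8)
The plan is to verify the transport law \eqref{vorticitytransportm} by differentiating its left-hand side along the flow and showing the result vanishes, using the radial EPDiff equation \eqref{mainomega}. Write $\Phi(t,r) = \gamma(t,r)^{n-1}\gamma_r(t,r)^2\,\omega(t,\gamma(t,r))$. At $t=0$ we have $\gamma(0,r)=r$, $\gamma_r(0,r)=1$, so $\Phi(0,r)=r^{n-1}\omega_0(r)$, which is the claimed right-hand side and is independent of $t$. Hence it suffices to prove $\partial_t\Phi(t,r)\equiv 0$ for each fixed $r$.

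First I would record the auxiliary identity for $\gamma_r$: differentiating the flow equation \eqref{radialflow} in $r$ gives $\partial_t\gamma_r(t,r) = u_r(t,\gamma(t,r))\,\gamma_r(t,r)$. Then I would compute $\partial_t\Phi$ by the product and chain rules, abbreviating evaluation at $(t,\gamma(t,r))$ by a superscript $\circ$. The three terms are: from $\partial_t(\gamma^{n-1})$ a factor $(n-1)\gamma^{n-2}\gamma_t = (n-1)\gamma^{n-2}u^\circ$; from $\partial_t(\gamma_r^2)$ a factor $2\gamma_r\,\partial_t\gamma_r = 2\gamma_r^2 u_r^\circ$; and from $\partial_t(\omega(t,\gamma(t,r)))$ a factor $\omega_t^\circ + \omega_r^\circ\,\gamma_t = \omega_t^\circ + \omega_r^\circ u^\circ$. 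Collecting,
\begin{equation}
\partial_t\Phi = \gamma^{n-1}\gamma_r^2\left[\,\omega_t^\circ + u^\circ\omega_r^\circ + 2u_r^\circ\,\omega^\circ + \tfrac{n-1}{\gamma}\,u^\circ\omega^\circ\,\right].
\end{equation}
The bracket is exactly the left-hand side of the radial EPDiff equation \eqref{mainomega} evaluated at the point $(t,\gamma(t,r))$, hence it vanishes identically for all $t\in[0,T)$ and all $r\ge 0$ (recall \eqref{mainomega} holds for all $r\ge 0$ by hypothesis). Therefore $\partial_t\Phi\equiv 0$, and integrating from $0$ gives $\Phi(t,r)=\Phi(0,r)=r^{n-1}\omega_0(r)$, which is \eqref{vorticitytransportm}.

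I do not expect a serious obstacle here; this is the standard ``vorticity is transported'' computation and the only mild care needed is bookkeeping the chain rule through $\gamma(t,r)$ and confirming the $\tfrac{n-1}{r}$ coefficient in \eqref{mainomega} becomes $\tfrac{n-1}{\gamma}$ after the substitution, which matches the $\gamma$-weight appearing from differentiating $\gamma^{n-1}$. The one regularity point worth a sentence is that the computation requires $u(t,\cdot)$ to be $C^1$ in $r$ (so that $\gamma_r$ is differentiable in $t$ and the chain rule applies), which is exactly the standing assumption that $u$ is a $C^1$ solution on $[0,T)$; since $\omega=(\sigma-\Delta)^k u$ is then continuous, all terms in the bracket are continuous and the identity \eqref{mainomega} is meaningful pointwise.
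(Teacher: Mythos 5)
Your proposal is correct and follows essentially the same route as the paper: differentiate the transported quantity $\gamma^{n-1}\gamma_r^2\,\omega(t,\gamma)$ along the flow, use $\gamma_{tr}=u_r(t,\gamma)\gamma_r$, and recognize the resulting bracket as the radial EPDiff equation evaluated at $\gamma(t,r)$, so the quantity is constant in $t$. The only (cosmetic) difference is that the paper differentiates the logarithm of this quantity, which implicitly divides by $\omega$, whereas your direct product-rule version avoids even that division.
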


\begin{proof}
By the chain rule and product rule, we have 
\begin{align*}
&\frac{\partial}{\partial t}\ln \Big( \gamma(t,r)^{n-1} \gamma_r(t,r)^2  \omega\big(t, \gamma(t,r)\big)\Big) \\
&\qquad\qquad= \frac{\omega_t\big(t,\gamma(t,r)\big) + \gamma_t(t,r) \omega_r\big(t,\gamma(t,r)\big)}{\omega\big(t,\gamma(t,r)\big)} 
+ (n-1)\,\frac{\gamma_t(t,r)}{\gamma(t,r)} + 2 \, \frac{\gamma_{tr}(t,r)}{\gamma_r(t,r)} \\
&\qquad\qquad= \frac{\omega_t\big(t,\gamma(t,r)\big) + u\big(t,\gamma(t,r)\big) \omega_r\big(t,\gamma(t,r)\big)}{\omega\big(t,\gamma(t,r)\big)} 
+ (n-1)\,\frac{u\big(t,\gamma(t,r)\big)}{\gamma(t,r)} + 2 \, u_r\big(t,\gamma(t,r)\big),
\end{align*}
using the fact that $\gamma_t(t,r) = u\big(t,\gamma(t,r)\big)$ and the $r$-derivative is 
\begin{equation}\label{gammarderivative}
\gamma_{tr}(t,r) = u_r\big(t,\gamma(t,r)\big) \gamma_r(t,r).
\end{equation}
Now replace $\gamma(t,r)$ everywhere with a new variable $R$, and this becomes 
\begin{align*}
&\frac{\partial}{\partial t}\ln \Big( \gamma(t,r)^{n-1} \gamma_r(t,r)^2  \omega\big(t, \gamma(t,r)\big)\Big) \\
&\qquad\qquad= \frac{1}{\omega(t,R)} \Big[ \omega_t(t,R) + u(t,R) \omega_r(t,R)
+ (n-1)\,\frac{u(t,R)}{R} \,\omega(t,R)  + 2 \, u_r(t,R) \omega(t,R)\Big],
\end{align*}
and the term in brackets is zero since it is just \eqref{mainomega} evaluated at $R$. 

Hence the quantity $\gamma(t,r)^{n-1} \gamma_r(t,r)^2  \omega\big(t, \gamma(t,r)\big)$ is constant in time. Since $\gamma(0,r)=r$ and $\gamma_r(0,r)=1$, the value of this quantity at $t=0$ is $r^{n-1} \omega_0(r)$. 
\end{proof}

Finally we discuss the main breakdown result we derive in this paper. A classical solution $U(t,x)$ exists as long as $U$ remains $C^1$ in the spatial variable $x\in \mathbb{R}^n$, corresponding to $u(t,r)$ being $C^1$ in $r\ge 0$. We will prove breakdown by showing that the radial Lagrangian flow $\gamma$ must leave the diffeomorphism group in finite time, which happens if and only if $u_r(t,r)$ becomes unbounded in finite time.

\begin{lemma}\label{breakdownlemma}
{Let $u\colon [0,T)\times [0,\infty)\to\mathbb{R}$ be a $C^1$ solution of the radial EPDiff equation \eqref{mainomega} such that $\lim_{r\to\infty} u(r)=0$.
We have:
\begin{enumerate}
\item The Lagrangian flow, defined by $\gamma_t(t,r) = u(t,\gamma(t,r))$ with $\gamma(0,r)=r$, exists on the same time interval as $u$ and is $C^1$ in time and space.
\item If $\lim_{t\nearrow T}\gamma_r(t,r)=0$ for some $r\ge 0$ and $T>0$, then 
$u$ cannot be extended as a $C^1$ solution to time $T$.
\end{enumerate}
} %decays to zero as $r\to\infty$. 
\end{lemma}

\begin{proof}
{
The Lagrangian flow is an ODE on $\mathbb{R}^n$ for a $C^1$ time-dependent vector field which decays as $r\to \infty$ and is thus bounded.  Consequently the flow is also defined on the same time interval and is $C^1$ in time and space, see~\cite[Proposition 4.1.22]{abraham2012manifolds}.}

  To see the second statement we  integrate equation \eqref{gammarderivative} and use the initial condition $\gamma_r(t,0)=1$. Then we have that 
    $$ \ln{\gamma_r(t,r)} = \int_0^t  u_r\big(\tau,\gamma(\tau,r))\big)\,d\tau$$
    for each $r\ge 0$.
    If $\gamma_r(t,r)$ approaches zero as $t\to T$ for some $r\ge 0$, then both sides of this equation must approach negative infinity for that $r$, and in particular $\displaystyle \inf_{r\ge 0} u_r(t,r) \to -\infty$ as $t\to T$. So the $C^1$ norm of $u$ must approach infinity, and the solution cannot be continued classically.
\end{proof}

\subsection{Momentum transport formulation}\label{sec:vort_trans}
Finally we will describe the momentum transport formulation, as discussed in general at the end of Section~\ref{sec:EulerArnold}, for the specific case of radial solutions. %
\begin{proposition}\label{gammadeltaformprop}
Suppose that the inertia operator $A$ is invertible and that the solution of $A(u(r)\,\partial_r) = \omega(r)\,\partial_r$ is given by an integral formula 
%$$ u(r) = \int_0^r \delta(s,r) s^{n-1} \omega(s) \,ds + \int_r^{\infty} \delta(r,s) s^{n-1}\omega(s)\,ds,$$
of the form \eqref{generalwsoln},
where $\delta$ is $C^1$ on $D = \{(r,s) \, \vert \, s\ge r\ge 0\} \backslash \{(0,0)\}$. 
Let $u(t,r)$ be a solution of the radial EPDiff equation \eqref{mainomega} with $u(0,r)=u_0(r)$ and $\omega_0(r)\,\partial_r = A(u_0(r)\partial_r)$. If $z_0(r):=r^{n-1}\omega_0(r)$, then the flow $\gamma(t,r)$ satisfies 
\begin{equation}\label{gammadeltaform}
\frac{\partial \gamma}{\partial t}(t,r) = \int_0^r \frac{\delta\big( \gamma(t,s),\gamma(t,r)\big)}{\gamma_s(t,s)}  \, z_0(s)\,ds + 
\int_r^{\infty} \frac{\delta\big( \gamma(t,r),\gamma(t,s)\big)}{\gamma_s(t,s)}  \, z_0(s)\,ds, 
\end{equation}
and its spatial derivative satisfies 
\begin{equation}\label{gammaprimedeltaform}
\frac{\partial}{\partial t}\ln{\big(\gamma_r(t,r)\big)} = \int_0^r \frac{\partial_2\delta\big( \gamma(t,s),\gamma(t,r)\big)}{\gamma_s(t,s)}  \, z_0(s)\,ds + 
\int_r^{\infty} \frac{\partial_1\delta\big( \gamma(t,r),\gamma(t,s)\big)}{\gamma_s(t,s)}  \, z_0(s)\,ds, 
\end{equation}
\end{proposition}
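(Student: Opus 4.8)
The plan is to derive \eqref{gammadeltaform} and \eqref{gammaprimedeltaform} directly from the momentum transport law \eqref{vorticitytransportm} together with the integral representation of $A^{-1}$, by a change of variables in the integral. The starting point is the flow equation $\gamma_t(t,r) = u(t,\gamma(t,r))$, so I first need a formula for $u(t,R)$ at an arbitrary point $R$ in terms of the Lagrangian data. By hypothesis, since $A(u(t,\cdot)\partial_r) = \omega(t,\cdot)\partial_r$, we have
$$
u(t,R) = \int_0^R \delta(s,R)\, s^{n-1}\omega(t,s)\,ds + \int_R^{\infty} \delta(R,s)\, s^{n-1}\omega(t,s)\,ds.
$$
Now I substitute $R = \gamma(t,r)$ and perform the change of variables $s = \gamma(t,\sigma)$ in each integral; since $\gamma(t,\cdot)$ is an increasing diffeomorphism of $[0,\infty)$ (as $u$ is $C^1$ and decaying, so $\gamma_r>0$ on the time interval of existence), the region $0\le s\le \gamma(t,r)$ corresponds to $0\le \sigma\le r$ and $\gamma(t,r)\le s<\infty$ corresponds to $r\le \sigma<\infty$, and $ds = \gamma_\sigma(t,\sigma)\,d\sigma$. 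The key algebraic step is then to rewrite $s^{n-1}\omega(t,s)\,ds$ using the transport law: from \eqref{vorticitytransportm} evaluated at $\sigma$, namely $\gamma(t,\sigma)^{n-1}\gamma_\sigma(t,\sigma)^2\,\omega(t,\gamma(t,\sigma)) = \sigma^{n-1}\omega_0(\sigma) = z_0(\sigma)$, we get
$$
s^{n-1}\omega(t,s)\,ds = \gamma(t,\sigma)^{n-1}\omega(t,\gamma(t,\sigma))\,\gamma_\sigma(t,\sigma)\,d\sigma = \frac{z_0(\sigma)}{\gamma_\sigma(t,\sigma)}\,d\sigma.
$$
Plugging this in and renaming $\sigma$ back to $s$ yields exactly \eqref{gammadeltaform}.

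For \eqref{gammaprimedeltaform} the cleanest route is to differentiate \eqref{gammadeltaform} with respect to $r$, being careful about the two $r$-dependencies: the variable upper/lower limit $r$ in the integrals, and the appearance of $\gamma(t,r)$ inside $\delta$. Differentiating the first integral in $r$ produces a boundary term $\dfrac{\delta(\gamma(t,r),\gamma(t,r))}{\gamma_r(t,r)} z_0(r)$ plus the interior term $\displaystyle\int_0^r \partial_2\delta(\gamma(t,s),\gamma(t,r))\,\gamma_r(t,r)\,\frac{z_0(s)}{\gamma_s(t,s)}\,ds$; differentiating the second integral produces $-\dfrac{\delta(\gamma(t,r),\gamma(t,r))}{\gamma_r(t,r)} z_0(r)$ plus $\displaystyle\int_r^{\infty} \partial_1\delta(\gamma(t,r),\gamma(t,s))\,\gamma_r(t,r)\,\frac{z_0(s)}{\gamma_s(t,s)}\,ds$. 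The two boundary terms cancel (here one uses that $\delta$ is continuous across the diagonal, so both evaluations give $\delta(\gamma(t,r),\gamma(t,r))$), and the remaining expression is $\gamma_r(t,r)$ times the right-hand side of \eqref{gammaprimedeltaform}. On the other hand, the left-hand side $\partial_t \gamma_r(t,r)$ equals $\gamma_r(t,r)\,\partial_t\ln\gamma_r(t,r)$; dividing through by $\gamma_r(t,r)>0$ gives \eqref{gammaprimedeltaform}. Alternatively one can bypass differentiating the integral by recalling \eqref{gammarderivative}, $\gamma_{tr}(t,r) = u_r(t,\gamma(t,r))\gamma_r(t,r)$, so $\partial_t\ln\gamma_r(t,r) = u_r(t,\gamma(t,r))$, and then compute $u_r(t,R)$ by differentiating the integral formula for $u(t,R)$ in $R$ and changing variables as before; the two approaches give the same answer.

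The main obstacle I anticipate is justifying the manipulations near $r=0$ and near the diagonal $s=r$, where $\delta$ (equivalently $\varphi$) may be singular — recall $D$ excludes the origin, and the Green's function formulas in Lemma~\ref{lem:greensfunction} have factors like $s^{2-n}$. One must check that the integrals in \eqref{gammadeltaform} and \eqref{gammaprimedeltaform} converge and that differentiation under the integral sign, together with the boundary-term bookkeeping, is legitimate; this uses the decay of $z_0$ (e.g.\ $\omega_0\in H^\infty$ or $z_0\in L^1$) at infinity and the mild integrability of the kernel singularity at the lower endpoint, plus the fact that $\gamma(t,\cdot)$ is a bi-Lipschitz-on-compacts change of variables for $t$ in the (open) interval of existence so that no new singularities are introduced. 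Since the statement only asserts the identity for $C^1$ solutions on $[0,T)$, all of these are controlled, and the cancellation of the diagonal boundary terms is the one structural point that genuinely needs the continuity of $\delta$ on $D$ rather than just piecewise smoothness.
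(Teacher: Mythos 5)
Your proposal follows essentially the same route as the paper's proof: substitute the integral representation of $A^{-1}$ into $\gamma_t = u(t,\gamma)$, change variables $s=\gamma(t,\sigma)$, and use the transport law \eqref{vorticitytransportm} to replace $s^{n-1}\omega\,ds$ by $z_0(\sigma)\,d\sigma/\gamma_\sigma$, then obtain \eqref{gammaprimedeltaform} by Leibniz differentiation in $r$ with the diagonal boundary terms cancelling and the common factor $\gamma_r$ divided out. Your added remarks on integrability near $r=0$ and the diagonal are sensible but not a departure from the paper's argument, so the proposal is correct and essentially identical in approach.
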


\begin{proof}
We want to express 
$\gamma_t(t,r) = u\big(t,\gamma(t,r)\big)$ as an integral formula using the conservation law \eqref{vorticitytransportm}. 
Fix a $t$; since the inversion of $A$ involves only spatial computations, we can ignore the $t$-dependence. 
So we have 
$$  u\big(\gamma(r)\big) = 
\int_0^{\gamma(r)} \delta\big(\sigma,\gamma(r)\big) \sigma^{n-1} \omega(\sigma) \,d\sigma + \int_{\gamma(r)}^{\infty} \delta\big(\gamma(r),\sigma\big) \sigma^{n-1}\omega(\sigma)\,d\sigma.$$
Now change variables via $\sigma = \gamma(s)$, so that $d\sigma = \gamma'(s) \,ds$. Then we have 
$$ u\big(\gamma(r)\big) = 
\int_0^r \delta\big(\gamma(s),\gamma(r)\big) \gamma(s)^{n-1} \omega\big(\gamma(s)\big) \gamma'(s) \,ds + \int_r^{\infty} \delta\big(\gamma(r),\gamma(s)\big) \gamma(s)^{n-1}\omega\big(\gamma(s)\big) \gamma'(s) \,ds.$$
Formula \eqref{vorticitytransportm} then implies that $\gamma(r)^{n-1}\gamma'(r)^2 \omega\big(\gamma(r)\big) = z_0(r)$, so these integrals simplify to 
$$ u\big(\gamma(r)\big) = 
\int_0^r \delta\big(\gamma(s),\gamma(r)\big) \, \frac{z_0(s)}{\gamma'(s)} \,ds + \int_r^{\infty} \delta\big(\gamma(r),\gamma(s)\big) \,\frac{z_0(s)}{\gamma'(s)} \,ds,$$
which is \eqref{gammadeltaform}.

To compute the spatial derivative, we use the Leibniz integral rule (relying on the fact that $\delta$ is $C^1$) and obtain 
\begin{align*}
u'\big(\gamma(r)\big) \gamma'(r) &= 
\int_0^r \partial_2\delta\big(\gamma(s),\gamma(r)\big)\,\gamma'(r) \, \frac{z_0(s)}{\gamma'(s)} \,ds + \int_r^{\infty} \partial_1\delta\big(\gamma(r),\gamma(s)\big)\,\gamma'(r) \,\frac{z_0(s)}{\gamma'(s)} \,ds \\
&\qquad\qquad +\delta\big(\gamma(r),\gamma(r)\big) \, \frac{z_0(r)}{\gamma'(r)} - \delta\big(\gamma(r),\gamma(r)\big) \,\frac{z_0(r)}{\gamma'(r)}.
\end{align*}
The local (nonintegral) terms on the last line cancel out, and in what remains there is a common factor of $\gamma'(r)$ which can be canceled to yield
\begin{align*} 
\frac{\partial}{\partial t} \ln{\big(\gamma_r(t,r)\big)} &= \frac{\gamma_{tr}(t,r)}{\gamma_r(t,r)} = u'\big(\gamma(r)\big)  \\
&= 
\int_0^r \partial_2\delta\big(\gamma(s),\gamma(r)\big) \, \frac{z_0(s)}{\gamma'(s)} \,ds + \int_r^{\infty} \partial_1\delta\big(\gamma(r),\gamma(s)\big)  \,\frac{z_0(s)}{\gamma'(s)} \,ds,
\end{align*}
which is \eqref{gammaprimedeltaform}.
\end{proof}

The system \eqref{gammadeltaform}--\eqref{gammaprimedeltaform} can be written in the form of an autonomous vector field ODE on a Banach space, as 
\begin{align}
\frac{d\gamma}{dt}(r) &= \int_0^r \frac{\delta\big( \gamma(s),\gamma(r)\big)}{\rho(s)}  \, z_0(s)\,ds + 
\int_r^{\infty} \frac{\delta\big( \gamma(r),\gamma(s)\big)}{\rho(s)}  \, z_0(s)\,ds, \label{gammaODE} \\
\frac{d\rho}{dt}(r) &= \rho(r) \int_0^r \frac{\partial_2\delta\big( \gamma(s),\gamma(r)\big)}{\rho(s)}  \, z_0(s)\,ds + 
\rho(r) \int_r^{\infty} \frac{\partial_1\delta\big( \gamma(r),\gamma(s)\big)}{\rho(s)}  \, z_0(s)\,ds. \label{rhoODE}
\end{align}
Here $\rho(t,r)=\gamma_r(t,r)$, but we treat it as a separate variable to get a closed ODE system.

Of course $\gamma$ and $\rho$  are not independent, since $\gamma$ is the unique antiderivative of $\rho$ such that $\gamma(0)=0$. Hence we may consider the system as a single ODE for $\rho$ alone. Hence we may view \eqref{gammaODE} as a consequence of \eqref{rhoODE}, although we will often find it convenient to deal with both equations simultaneously. For a fixed function $z_0$, this makes sense on the space of functions $\rho$ bounded above and below by positive numbers.  

More precisely, consider $\rho\in \spacey$ where $\spacey$ is the space of continuous functions on $[0,\infty)$ for which there exist numbers $b>a>0$
such that $0<a\le \rho(r)\le b$ for all $r\in [0,\infty)$. Then we also have
 $$ a\le \frac{\gamma(r)}{r} = \frac{\int_0^r \rho(s)\,ds}{r} \le b,$$
so that $r\mapsto\gamma(r)/r$ is also in $\spacey$.  
Let $\Gamma$ denote the map from $\spacey$ to $C^1$ diffeomorphisms given by 
\begin{equation}
\Gamma(\rho)(r) =\gamma(r).
\end{equation} 
Then we can express the vector field $X$ which governs the evolution of $\rho$ via:
\begin{multline}\label{vectorfield}
X(\rho)(r) =  \rho(r) \int_0^r \partial_2\delta\big(\Gamma(\rho)(s),\Gamma(\rho)(r)\big) \, \frac{z_0(s)}{\rho(s)} \, ds
+ \rho(r) \int_r^{\infty} \partial_1\delta\big(\Gamma(\rho)(r),\Gamma(\rho)(s)\big) \, \frac{z_0(s)}{\rho(s)} \, ds.
\end{multline}
Fundamentally this is what we have in mind when considering the system \eqref{gammaODE}--\eqref{rhoODE}.

In Appendix \ref{sec:app_well} we prove local existence of solutions of the system \eqref{gammaODE}--\eqref{rhoODE} in the homogeneous case $\aconst=0$ with $k=1$ and $k=2$. We obtain local solutions $\rho$ in the space of continuous, positive functions on $[0,\infty)$, and thus we obtain existence of $C^1$ local solutions $\gamma$. These then indirectly generate spatially $C^1$ local solutions $u(t,r)$ via the formulas $\partial_t\gamma(t,r) = u(t,\gamma(t,r))$ and $u_r(t,\gamma(t,r)) = \rho_t(t,r)/\rho(t,r)$. 
See~\cite{misiolek2002classical,lee2017local} for a similar approach in the context of the Camassa-Holm equation.

\section{An explicit solution formula for the radial Hunter-Saxton equation}\label{sec:radialHS}
%To motivate our general breakdown theorem, cf. Section~\ref{sec:general}, we will first derive breakdown of solutions for three model equations: the radial Hunter-Saxton equation in Section~\ref{sec:radialHS}, the EPDiff equation of the homogeneous $\dot H^2$ metric on $\R^3$ in Section~\ref{sec:homH2} and the usual Camassa-Holm equation in Section~\ref{sec:CHdim1}. 
In this section we will use the solution of the Liouville equation to obtain a solution formula for the radial Hunter-Saxton equation with $\dot{H}^1$ inertia operator and thereby see the precise breakdown mechanism. 

The radial Hunter-Saxton equation corresponds to the EPDiff equation for radial solutions with the inertia operator $A=-\Delta$, i.e.,
\begin{equation}\label{eq:radialHS}
\begin{cases}
&\omega_t + u \omega_r + 2u_r \omega + \tfrac{n-1}{r} u \omega = 0,\\
&\omega\partial_r=-\Delta(u\partial_r)=-\Big(u_{rr} + \frac{n-1}{r}\, u_r - \frac{n-1}{r^2} \, u\Big)\partial_r.\
\end{cases}
\end{equation}
Note that in dimension one the momentum is simply $\omega=-u_{rr}$, and equation~\eqref{eq:radialHS} equals  the Hunter-Saxton equation~\cite{hunter1991dynamics}, hence the name radial Hunter-Saxton equation. The higher-dimensional Hunter-Saxton equation was introduced by Modin in~\cite{modin2015generalized}, motivated by connections to the field of information geometry~\cite{khesin2013geometry}.
Recalling formula \eqref{H1dotinversion}, we have that $\delta(r,s) = \tfrac{1}{n} r s^{1-n}$; thus the system \eqref{gammaODE}--\eqref{rhoODE} can be written as 
\begin{equation}\label{geodesicH1system}
\begin{split}
\frac{d\gamma}{dt}(r) =  \frac{\gamma(r)^{1-n}}{n}  \int_0^r \gamma(s) \, \frac{z_0(s)}{ \rho(s)} \,ds
+ \frac{\gamma(r)}{n} \int_r^{\infty} \gamma(s)^{1-n} \frac{z_0(s)}{\rho(s)}\,ds, \\
\frac{d\rho}{dt}(r) = \frac{(1-n) \gamma(r)^{-n} \rho(r)}{n} \int_0^r \gamma(s) \, \frac{z_0(s)}{ \rho(s)} \,ds
+ \frac{\rho(r)}{n} \int_r^{\infty} \gamma(s)^{1-n} \frac{z_0(s)}{\rho(s)}\,ds,
\end{split}
\end{equation}
where $z_0(s) = s^{n-1} \omega_0(s)$. 
In the next theorem we will derive an explicit solution formula for equation~\eqref{geodesicH1system} and thus of the radial Hunter-Saxton equation~\eqref{eq:radialHS}. Our strategy is to combine these equations into one, eliminating the integral on $[0,r]$ so that only the integral on $[r,\infty)$ remains. By coincidence everything in the combined equation depends only on the quantity $\gamma(r)^{n-1}\rho(r)/r^{n-1}$, which then satisfies the Liouville equation \eqref{eq:Liouville}. 
\begin{theorem}\label{exactsolnH1thm}
{Let $\omega_0\in  L^1([0,\infty))$}.
Then
\begin{align}\label{H1soln}
\frac{\partial \gamma}{\partial r}(t,r) &= \rho(t,r)\\
\gamma(t,r)^{n-1} \rho(t,r) %
&=r^{n-1} \Big( 1 + \frac{t}{2} \int_r^{\infty} \omega_0(s)\,ds \Big)^2
= r^{n-1} \left( 1 + \frac{t}{2}\left(u_0'(r)+\frac{n-1}{r}u_0(r)\right) \right)^2
\end{align}
solves the Lagrangian form of the radial Hunter-Saxton equation~\eqref{geodesicH1system} with initial data $\gamma(0,r)=r$ and $\frac{\partial \gamma}{\partial r}(0,r) = \rho(0,r)=1$.

The solution breaks down in finite time by leaving the diffeomorphism group, if and only if $u_0'(r)+\frac{n-1}{r}u_0(r) < 0$ for some $r\ge 0$.
\end{theorem}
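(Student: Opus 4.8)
The plan is to collapse the entire system~\eqref{geodesicH1system} onto the scalar Liouville equation~\eqref{eq:Liouville} by passing to the single unknown $q(t,r):=r^{1-n}\gamma(t,r)^{n-1}\rho(t,r)$, which is geometrically just the Jacobian determinant of the radial diffeomorphism $\gamma$, and then invoking Lemma~\ref{lem:Liouville}. Concretely, I would abbreviate the two equations of~\eqref{geodesicH1system} as $\gamma_t=\tfrac1n\gamma^{1-n}A+\tfrac1n\gamma B$ and $\rho_t=\tfrac{1-n}{n}\gamma^{-n}\rho\,A+\tfrac1n\rho\,B$, where $A:=\int_0^r\gamma(s)\,\tfrac{z_0(s)}{\rho(s)}\,ds$ and $B:=\int_r^\infty\gamma(s)^{1-n}\,\tfrac{z_0(s)}{\rho(s)}\,ds$, and then differentiate $\gamma^{n-1}\rho$ in time. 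The two contributions containing the inner integral $A$ appear with the opposite coefficients $\tfrac{n-1}{n}$ and $\tfrac{1-n}{n}$ and cancel identically---this is the elimination of the $[0,r]$-integral announced just before the theorem---while the $B$-terms combine with total coefficient $1$, so that $\partial_t(\gamma^{n-1}\rho)=(\gamma^{n-1}\rho)\,B$. Since $z_0(s)=s^{n-1}\omega_0(s)$ and $\gamma(s)^{n-1}\rho(s)=s^{n-1}q(t,s)$, the integrand of $B$ is exactly $\omega_0(s)/q(t,s)$, and dividing through by $r^{n-1}$ yields
\begin{equation*}
\partial_t\ln q(t,r)=\int_r^\infty\frac{\omega_0(s)}{q(t,s)}\,ds,\qquad q(0,r)=1,
\end{equation*}
which is precisely~\eqref{eq:Liouville} with the function called $z_0$ there now played by $\omega_0$.

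By Lemma~\ref{lem:Liouville} this forces $q(t,r)=\big(1+\tfrac t2\int_r^\infty\omega_0(s)\,ds\big)^2$ on the maximal interval $[0,T)$ with $T=2/K$; multiplying by $r^{n-1}$ gives the stated formula for $\gamma^{n-1}\rho$, and the second expression in~\eqref{H1soln} follows from the identity $\int_r^\infty\omega_0(s)\,ds=u_0'(r)+\tfrac{n-1}{r}u_0(r)$, which is immediate from~\eqref{eq:radialHS}---namely, that $-\omega_0$ is the $r$-derivative of $u_0'+\tfrac{n-1}{r}u_0$---together with decay at infinity. To upgrade this to an honest solution of~\eqref{geodesicH1system}, I would reverse the reduction: starting from the explicit positive function $q$, define $\gamma(t,r):=\big(n\int_0^r s^{n-1}q(t,s)\,ds\big)^{1/n}$ (so that $\gamma(t,0)=0$ and $\gamma(0,r)=r$) and $\rho:=\gamma_r=r^{n-1}q/\gamma^{n-1}$ (so that $\rho(0,r)=1$). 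A weighted version of the squeeze from Section~\ref{sec:vort_trans} shows that whenever $q$ lies between positive constants so does $\gamma(r)/r$, hence so does $\rho$, so $\rho(t,\cdot)\in\spacey$ for each $t<T$; substituting back into~\eqref{geodesicH1system} then verifies the equations, with uniqueness supplied by the local Picard theory.

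It remains to settle the blowup dichotomy. If $u_0'(r)+\tfrac{n-1}{r}u_0(r)=\int_r^\infty\omega_0\ge 0$ for every $r$, then $q(t,r)\ge 1$ for all $t\ge 0$, while $q$ stays bounded above on bounded time intervals because $\omega_0\in L^1$; hence the reconstructed $\rho$ never leaves $\spacey$ and the solution is global. Conversely, suppose $\int_{r}^\infty\omega_0<0$ for some $r$. Since $r\mapsto\int_r^\infty\omega_0$ is continuous and vanishes at infinity, the constant $K$ of Lemma~\ref{lem:Liouville} is positive and the infimum $-K=\inf_{r\ge0}\int_r^\infty\omega_0$ is attained at some finite $r_*$, so $T=2/K<\infty$ and $q(t,r_*)=\big(1-\tfrac t2 K\big)^2\to0$ as $t\nearrow T$. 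Writing $\gamma_r(t,r_*)=r_*^{n-1}q(t,r_*)/\gamma(t,r_*)^{n-1}$ and noting that $\gamma(t,r_*)$ stays bounded above and bounded away from zero on $[0,T]$ (in the degenerate sub-case $r_*=0$ one uses instead $q(t,0)=\gamma_r(t,0)^n$), we conclude $\gamma_r(t,r_*)\to0$, so Lemma~\ref{blowuplemma} gives finite-time $C^1$ breakdown.

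I expect the genuine obstacle to lie not in any single computation but in the bookkeeping of passing between the formal solution of the reduced (Liouville) equation and a bona fide solution of~\eqref{geodesicH1system} that remains in $\spacey$ up to the blowup time---in particular controlling $\gamma(t,r_*)$ away from zero as $t\to T$ and dealing with the case $r_*=0$. The algebraic collapse to the Liouville equation, which is the conceptual core of the argument, is short and essentially forced once the substitution $q=r^{1-n}\gamma^{n-1}\rho$ has been made.
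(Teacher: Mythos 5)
Your proposal is correct and takes essentially the same route as the paper's proof: the same substitution $q(t,r)=r^{1-n}\gamma(t,r)^{n-1}\rho(t,r)$, the same cancellation of the $[0,r]$-integral so that $\partial_t\ln\bigl(\gamma^{n-1}\rho\bigr)$ equals the outer integral, the same reduction to the Liouville equation via Lemma~\ref{lem:Liouville}, and the same integration identity $\int_r^\infty\omega_0(s)\,ds=u_0'(r)+\tfrac{n-1}{r}u_0(r)$; your reconstruction of $\gamma$ from $q$ and the explicit global/blowup dichotomy merely spell out details the paper leaves implicit. The one small overclaim is that $\gamma(t,r_*)$ stays bounded away from zero up to $T$ --- this can fail, e.g.\ if $\omega_0\equiv 0$ on $[0,r_*]$ --- but the conclusion is unaffected, since if instead $\gamma(t,r_*)\to 0$ then $\int_0^{r_*}\rho(t,s)\,ds\to 0$ forces $\inf_{s}\rho(t,s)\to 0$, which still yields breakdown via Lemma~\ref{blowuplemma}.
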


\begin{proof}
The system \eqref{geodesicH1system} can be written in the form
\begin{align*}
\frac{\partial}{\partial t} \ln{\gamma} &= \frac{1}{n} \big[ X(t,r) + Y(t,r)\big], \\
\frac{\partial}{\partial t} \ln{\rho} &= \frac{1}{n} \big[ -(n-1) X(t,r) + Y(t,r)\big],
\end{align*}
where
$$ X(t,r) := \gamma(t,r)^{-n} \int_0^r \frac{\gamma(t,s) s^{n-1}\omega_0(s)}{ \rho(t,s)} \,ds \qquad \text{and}\qquad
Y(t,r) :=  \int_r^{\infty} \frac{s^{n-1}\omega_0(s)}{\gamma(t,s)^{n-1}\rho(t,s)}\,ds.$$
Thus we see that
\begin{equation}\label{H1logform}
\frac{\partial}{\partial t}\Big( \ln{\big[ \gamma(t,r)^{n-1} \rho(t,r)\big]}\Big) = Y(t,r) %
\end{equation}
 Define a new function
 \begin{equation}\label{qdef}
 q(t,r) := \frac{\gamma(t,r)^{n-1} \rho(t,r)}{r^{n-1}} \quad \text{for $r>0$}, \qquad q(t,0) := \rho(t,0)^{n},
 \end{equation}
 where the value at $r=0$ is defined to get continuity, and the quotient by $r^{n-1}$ is to ensure that if $\rho(t,r)$ is bounded between two positive constants $a$ and $b$ for some $t$, then $q(t,r)$ will be bounded between $a^n$ and $b^n$ (and hence be an element of our space of continuous positive functions; see Appendix \ref{sec:app_well}).

 Then equation \eqref{H1logform} becomes
\begin{equation}\label{Liouville1}
\frac{\partial}{\partial t} \ln{q(t,r)} = \int_r^{\infty} \frac{\omega_0(s)}{q(t,s)} \,ds, \qquad q(t,0) = 1,
\end{equation}
 which puts it exactly in the form of the Liouville equation studied in  Lemma~\ref{lem:Liouville}. Thus it follows that
 \begin{equation*}%
q(t,r) = r^{n-1} \Big( 1 + \frac{t}{2} \int_r^{\infty} \omega_0(s)\,ds \Big)^2
\end{equation*}
From here the solution formula follows by integrating. 
\begin{equation}
\int_r^{\infty} \omega_0(s)\,ds=-\int_r^{\infty} \Big[ u_0''(s)+\frac{n-1}{s}u_0'(s)-\frac{n-1}{s^2}u_0(s)\Big]   \,ds=
u_0'(r)+\frac{n-1}{r}u_0(r).
\end{equation}
{Here we used that $\lim_{r\to \infty}u_0(r)=\lim_{r\to \infty}u'_0(r)=0$, which follows from the assumption that $\omega_0\in L^1([0,\infty))$ using formulas~\eqref{generalwsoln} and~\eqref{H1dotinversion}.}
Thus the solution can only break down as $\rho(t,r)$ approaches zero in finite time, and this happens if there exists an $r_0\ge 0$ such that $u_0'(r_0)+\frac{n-1}{r_0}u_0(r_0) < 0$.
\end{proof}

 Note that the first such time $t$ with $\rho(t,r_0)=0$ occurs at a point $r_0$ where $\int_r^{\infty} \omega_0(s)\,ds$ has its minimum, so that the initial momentum $\omega_0(r)$ crosses from positive to negative at $r_0$ as in the one-dimensional case. Also note that we could if desired integrate \eqref{H1soln} in $r$ to solve for $\gamma(t,r)^n$, then use this to construct the velocity field $u(t,r)$ using the flow equation 
 \eqref{radialflow}.

\section{The general breakdown result}\label{sec:general}

We now consider the system \eqref{gammaODE}--\eqref{rhoODE} in general. 
The following general breakdown theorem  provides one of the main technical contributions of the present article:
\begin{theorem}\label{Qtheorem}
Let $\omega_0(r)\leq 0$ for all $r>0$ and let $\delta$ be a kernel of the form $\delta(r,s) = rs \varphi(r,s)$, where $\varphi$ is smooth on 
$D = \{(r,s)\, \vert s\ge r\ge 0\} \backslash \{(0,0)\}$, satisfying the following conditions:
\begin{enumerate}[label=(\alph*)]
\item positivity: $\varphi(r,s) > 0$ for all $(r,s)\in D$; \label{main_cond1}
\item log-supermodularity: $\frac{\partial^2}{\partial r\partial s} \ln{\varphi(r,s)} \ge 0$ for all $(r,s)\in D; $\label{main_cond4}
 \item there exists a $C>0$ such that for all $r\ge 0$, \label{main_cond5}
\begin{equation}\label{Sdef}
S(r) := \frac{r \partial_1\varphi(r,r) \varphi(0,r)
- r \varphi(r,r) \partial_2\varphi(0,r)}{\varphi(0,r)^2}\ge C
\end{equation}
\end{enumerate}
Assume in addition that there exists a local solution to equations~\eqref{gammaODE}--\eqref{rhoODE} corresponding to  initial condition
$\omega_0$.

If $Q\colon [0,\infty)\to [0,\infty)$ is defined via
\begin{equation}\label{Qdef}
Q(r) :=\frac{1}{r \varphi(0,r)} \text{ for $r>0$},\qquad Q(0) := \lim_{r\to 0} \frac{1}{r\varphi(0,r)},
\end{equation}
then for any pair $(\gamma,\rho)$ satisfying equations~\eqref{gammaODE}--\eqref{rhoODE}, the quantity $q(t,r) = Q\big(\gamma(t,r)\big)\rho(t,r)/Q(r)$ satisfies the differential inequality
\begin{equation}\label{vectorfieldcomparison}
\frac{\partial}{\partial t} \ln{\big[q(t,r)\big]} \le -C \int_r^{\infty} \frac{s^{n-1}\lvert \omega_0(s)\rvert/Q(s)}{q(t,s)} \, ds.
\end{equation}
Consequently $Q(\gamma(t,r))\rho(t,r)/Q(r)$ reaches zero in finite time by comparison with the Liouville equation.
\end{theorem}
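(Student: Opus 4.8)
The plan is to compute the time derivative of $\ln q(t,r)$ explicitly using equations \eqref{gammaODE}--\eqref{rhoODE}, and show that after expanding everything in terms of $\varphi$ the ``local'' integral on $[0,r]$ and the ``nonlocal'' integral on $[r,\infty)$ recombine in a way that is bounded above by the stated right-hand side. First I would write $\ln q(t,r) = \ln Q(\gamma(t,r)) + \ln\rho(t,r) - \ln Q(r)$, so that
\begin{equation*}
\frac{\partial}{\partial t}\ln q(t,r) = \frac{Q'(\gamma(t,r))}{Q(\gamma(t,r))}\,\gamma_t(t,r) + \frac{\rho_t(t,r)}{\rho(t,r)},
\end{equation*}
and substitute the integral formulas \eqref{gammaODE} for $\gamma_t$ and \eqref{rhoODE} for $\rho_t/\rho$. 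Using $\delta(r,s)=rs\varphi(r,s)$ and the definition $Q(r)=1/(r\varphi(0,r))$, each of the four resulting integrals can be rewritten with the common weight $z_0(s)/\rho(s) = s^{n-1}\omega_0(s)/\rho(s)$; the goal is to see that the coefficient functions multiplying $z_0(s)/\rho(s)$ inside the $[0,r]$ integral and inside the $[r,\infty)$ integral each have a favorable sign once we use $\omega_0\le 0$.

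The second step is the sign analysis. For the integral on $[r,\infty)$, the coefficient of $z_0(s)/\rho(s)$ should reduce, after using the definition of $Q$, to something proportional to a second mixed logarithmic derivative of $\varphi$, evaluated along the relevant arguments; condition \ref{main_cond4} (log-supermodularity, $\partial_r\partial_s\ln\varphi\ge 0$) is exactly what forces this to have the right monotonicity in $s$ so that, combined with $z_0\le 0$, the whole term is controlled. For the integral on $[0,r]$, I expect the coefficient to involve the quantity $r\partial_1\varphi(r,r)\varphi(0,r) - r\varphi(r,r)\partial_2\varphi(0,r)$ over $\varphi(0,r)^2$, i.e. precisely $S(r)$ from \eqref{Sdef}; condition \ref{main_cond5}, $S(r)\ge C>0$, together with positivity \ref{main_cond1} and $z_0\le 0$, will make this contribution negative and in fact bounded above by $-C\int_r^\infty \tfrac{s^{n-1}|\omega_0(s)|/Q(s)}{q(t,s)}\,ds$ after re-expressing the $[0,r]$ integral against the $[r,\infty)$ integral using that $q$ (hence $\rho$) is positive and $\gamma$ is increasing. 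Putting the two pieces together yields the differential inequality \eqref{vectorfieldcomparison}.

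For the final conclusion, I would invoke Lemma~\ref{lem:comparison} with $F$ the right-hand side of the true evolution equation for $q$ and $\tilde F(\tilde q)(r) := -C\int_r^\infty \tfrac{s^{n-1}|\omega_0(s)|/Q(s)}{\tilde q(s)}\,ds$. The inequality \eqref{vectorfieldcomparison} says $\partial_t\ln q \le \tilde F(q)$, and one checks the monotonicity hypothesis \eqref{comparisonassumption}: if $\tilde v\ge v$ pointwise then $\tilde F(\tilde v)\ge \tilde F(v)$ since increasing the denominator and having $|\omega_0|\ge 0$ makes the negative integral larger. Then $\tilde F$ is a rescaled Liouville vector field (Lemma~\ref{lem:Liouville}) with $z_0$ replaced by $-C s^{n-1}|\omega_0(s)|/Q(s)$, which is $\le 0$ and not identically zero, so its solution $\tilde q(t,r)$ with $\tilde q(0,\cdot)\equiv 1$ reaches zero in finite time (first at $r=0$). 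Comparison then forces $q(t,r)\le\tilde q(t,r)$, so $q$ — and hence $\rho(t,r)=\gamma_r(t,r)$, since $Q(\gamma)/Q$ stays bounded away from $0$ and $\infty$ on finite time intervals — reaches zero in finite time; Lemma~\ref{blowuplemma} then gives $C^1$ blowup of $u$.

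The main obstacle I anticipate is the algebra of the second step: verifying that the combination of the four integral terms really does collapse so that the $[0,r]$ part produces exactly the quantity $S(r)$ and the $[r,\infty)$ part is governed exactly by $\partial_r\partial_s\ln\varphi$. This requires carefully differentiating $Q$, handling the diagonal boundary terms $\varphi(r,r)$ and $\partial_i\varphi(r,r)$ that arise from the Leibniz rule when differentiating the split integral, and checking that the would-be singular contributions at the diagonal $s=r$ cancel (as they did in the model-equation computations of Section~\ref{sec:model}). A secondary technical point is justifying the behavior of $Q$ and $q$ near $r=0$, i.e. that the limit defining $Q(0)$ in \eqref{Qdef} exists and that $q(t,\cdot)$ stays in the space $\mathcal{P}$ of functions bounded between positive constants on any time interval where the solution exists, which is needed to legitimately apply the Banach-space comparison lemma.
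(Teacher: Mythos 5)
Your overall architecture matches the paper: write $\partial_t\ln q=\frac{Q'(\gamma)}{Q(\gamma)}\gamma_t+\frac{\rho_t}{\rho}$, substitute \eqref{gammaODE}--\eqref{rhoODE}, establish the pointwise inequality \eqref{vectorfieldcomparison}, and then conclude via Lemma~\ref{lem:comparison} with the rescaled Liouville field (whose Lipschitz property is Proposition~\ref{rhopowerslipschitzthm}); that first and last step are fine. The genuine gap is in the middle step, which is the entire content of the theorem, and your sketch of it misallocates the hypotheses in a way that would not close. What must be shown is two separate pointwise bounds on the integrand coefficients: (i) $\frac{Q'(r)}{Q(r)}\delta(s,r)+\partial_2\delta(s,r)\ge 0$ for $s\le r$, so that the $[0,r]$ integral (carrying the weight $-\lvert z_0\rvert/\rho$) is nonpositive and is simply \emph{discarded}; and (ii) $\frac{Q'(r)}{Q(r)}\delta(r,s)+\partial_1\delta(r,s)\ge C/Q(s)$ for $s\ge r$, which is what produces the right-hand side $-C\int_r^\infty \frac{s^{n-1}\lvert\omega_0(s)\rvert/Q(s)}{q(t,s)}\,ds$. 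Inequality (i) follows by integrating the log-supermodularity condition~\ref{main_cond4}, in the form $\partial_s\bigl(\partial_2\delta(s,r)/\delta(s,r)\bigr)\ge 0$, from $0$ to $s$, using that $Q$ in \eqref{Qdef} is chosen precisely so that $\lim_{s\to 0}\partial_2\delta(s,r)/\delta(s,r)=-Q'(r)/Q(r)$. Inequality (ii) is reduced to the diagonal $s=r$ because both $-\partial_1\delta(r,s)/\delta(r,s)$ and $C/(Q(s)\delta(r,s))$ attain their suprema over $s\ge r$ at $s=r$ (the first by condition~\ref{main_cond4}, the second as a consequence of (i) with the roles of $r$ and $s$ exchanged), and on the diagonal (ii) becomes $Q'(r)\delta(r,r)+Q(r)\partial_1\delta(r,r)\ge C$, which is exactly $S(r)\ge C$ from \eqref{Sdef}. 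Your proposal instead asserts that the $[0,r]$ coefficient ``is precisely $S(r)$'' and that condition~\ref{main_cond5} makes the $[0,r]$ contribution bounded above by $-C\int_r^\infty\cdots$: this cannot be right, since the $[0,r]$ term only involves $\omega_0$ on $[0,r]$ and may vanish identically while the claimed bound is strictly negative; $S(r)$ is a diagonal quantity governing the $[r,\infty)$ integrand (combined with the $s\to 0$ normalization encoded in $Q$), not the coefficient of the $[0,r]$ integral, and no ``re-expressing the $[0,r]$ integral against the $[r,\infty)$ integral'' is needed or available.

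Two smaller points. First, the obstacle you anticipate about Leibniz boundary terms and diagonal cancellations does not arise here: those cancellations were already carried out in Proposition~\ref{gammadeltaformprop} when deriving \eqref{gammaprimedeltaform}/\eqref{rhoODE}; in the proof of the present theorem one only multiplies \eqref{gammaODE} by $Q'(\gamma)/Q(\gamma)$ and adds, so no new diagonal terms appear. Second, the theorem's conclusion is only that $q=Q(\gamma)\rho/Q$ reaches zero in finite time; your further assertion that $\rho$ itself reaches zero because ``$Q(\gamma)/Q$ stays bounded away from $0$ and $\infty$'' is not justified in general (e.g.\ $Q(0)=0$ in higher dimensions) and is handled separately in the applications, so it should not be folded into this proof.
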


\begin{remark}
Note that $\delta(r,s)$ and $\varphi(r,s)$ are intentionally only defined for $s\ge r$ as in the integrals for $s\in [r,\infty)$; hence when we use $\delta(s,r)$ or $\varphi(s,r)$, it is implicit that $s\le r$ as in the integrals for $s\in [0,r]$. This is important since we will interchange the names of these variables several times, 
and confusion is avoided by remembering that the second argument is always larger than the first. 

Also note that $\partial_1\varphi(r,r)$ is not the same as $\frac{d}{dr} \varphi(r,r)$; rather it is $\lim_{s\to r^-} \frac{\partial}{\partial r} \varphi(r,s)$. Hence the quantity $S(r)$ in \eqref{Sdef} is \emph{not} the same thing as the tempting simplification $r \, \frac{d}{dr} \frac{\varphi(r,r)}{\varphi(0,r)}$. 

Finally note that existence of $Q$ in the definition \eqref{Qdef} at zero is assumed implicitly, and this limit exists in all cases given by Lemmas \ref{lem:greensfunctionhomog} and \ref{lem:greensfunctionnonhomog}. 
When $n=1$ in the cases $\dot{H}^1$ or $H^1$, and when $n=3$ in the cases $\dot{H}^2$ or $H^2$, this limit is positive, while in all higher dimensional cases we have $Q(0)=0$. 
\end{remark}

\begin{proof}[Proof of Theorem~\ref{Qtheorem}]
In the following we will suppress the dependence on $t$. Using the system \eqref{gammaODE}--\eqref{rhoODE} and the assumption $z_0\le 0$, we derive an ODE for $Q(\gamma)\rho$:
\begin{align*}
\frac{d}{dt} \ln{\big[ Q(\gamma)\rho\big]} &=
 \frac{Q'\big(\gamma(r)\big)}{Q\big(\gamma(r)\big)} \, \frac{d\gamma}{dt} + \frac{d}{dt} \ln{\rho} \\
&= -\int_0^r \Big( \frac{Q'(\gamma(r))}{Q(\gamma(r))} \, \delta\big(\gamma(s),\gamma(r)\big) + \partial_2\delta\big(\gamma(s),\gamma(r)\big)\Big) \, \frac{\lvert z_0(s)\rvert}{\rho(s)} \, ds\\
&\qquad\qquad - \int_r^{\infty}\Big( \frac{Q'(\gamma(r))}{Q(\gamma(r))} \, \delta\big(\gamma(r),\gamma(s)\big) + \partial_1\delta\big(\gamma(r),\gamma(s)\big)\Big)\, \frac{\lvert z_0(s)\rvert}{\rho(s)} \, ds
\end{align*}
We will therefore prove inequality~\eqref{vectorfieldcomparison} if we can show that there exists a constant $C>0$ such that
\begin{align}
\frac{Q'(r)}{Q(r)} \, \delta(s,r) + \partial_2\delta(s,r) &\ge 0 \quad \text{for all $s\in [0,r]$;} \label{condition1} \\
\frac{Q'(r)}{Q(r)} \, \delta(r,s) + \partial_1\delta(r,s) &\ge \frac{C}{Q(s)} \quad \text{for all $s\in [r,\infty)$.} \label{condition2}
\end{align}

By assumption \ref{main_cond4}, using $\delta(r,s) = rs \varphi(r,s)$ we have that
\begin{equation}\label{deltaconvexity}
\frac{\partial^2}{\partial r\partial s} \ln{\delta(r,s)} =\frac{\partial^2}{\partial r\partial s} \ln{\varphi(r,s)}  \geq 0 \quad \text{for $s\ge r$},
\end{equation}
which, after interchanging the variables, is equivalent to $$ \frac{\partial}{\partial s} \Big( \frac{1}{\delta(s,r)}\frac{\partial \delta(s,r)}{\partial r}\Big) \ge 0 \quad \text{for $s\le r$}.$$
Integrating this with respect to $s$, on the interval from $0$ to $s$, we get
$$  \frac{\partial_2\delta(s,r)}{\delta(s,r)} \ge \lim_{s\to 0} \frac{\partial_2\delta(s,r)}{\delta(s,r)} = \frac{\frac{d}{dr}[ r\varphi(0,r)]}{r\varphi(0,r)} = -\frac{Q'(r)}{Q(r)},$$
by definition of $Q$, which is equivalent to \eqref{condition1} since $\delta$ is nonnegative.

Since $\varphi$ is positive by assumption \ref{main_cond1}, condition \eqref{condition2} is equivalent to
\begin{equation}\label{cond2prime}
\frac{Q'(r)}{Q(r)} \ge - \frac{\partial_1\delta(r,s)}{\delta(r,s)} + \frac{C}{Q(s)\delta(r,s)} \qquad \text{for all $s\ge r$.}
\end{equation}
Since $Q$ is nonnegative, condition~\eqref{condition1} implies that
$$ \frac{\partial}{\partial r} \big( Q(r)\delta(s,r)\big) \ge 0 \text{ for $r\ge s$,}$$
which implies that $Q(r)\delta(s,r) \ge Q(s) \delta(s,s)$ whenever $r\ge s$. Exchanging the roles of $r$ and $s$ here, we find
that $Q(s)\delta(r,s) \ge Q(r) \delta(r,r)$ whenever $s\ge r$, which implies that for any $C>0$,
\begin{equation}\label{sup1}
\sup_{s\ge r} \frac{C}{Q(s)\delta(r,s)} = \frac{C}{Q(r)\delta(r,r)}.
\end{equation}
Meanwhile the log-supermodularity assumption \eqref{deltaconvexity} implies that
$ \frac{\partial_1\delta(r,s)}{\delta(r,s)}$ is increasing in $s$ for $s\ge r$, and thus
\begin{equation}\label{sup2}
\sup_{s\ge r} - \frac{\partial_1\delta(r,s)}{\delta(r,s)} = -\inf_{s\ge r} \frac{\partial_1\delta(r,s)}{\delta(r,s)} = -\frac{\partial_1\delta(r,r)}{\delta(r,r)}.
\end{equation}
Since \eqref{sup1} and \eqref{sup2} are attained at the same point $s=r$, we see that \eqref{cond2prime}
is satisfied if and only if
$$ \frac{Q'(r)}{Q(r)} \ge -\frac{\partial_1\delta(r,r)}{\delta(r,r)} + \frac{C}{Q(r)\delta(r,r)} \quad \text{for all $r>0$}.$$
This is now equivalent to
\begin{equation}\label{Sconditionsimple}
Q'(r) \delta(r,r) + Q(r) \partial_1\delta(r,r) \ge C,
\end{equation}
and inserting the definition \eqref{Qdef} of $Q$ turns this into
 the requirement $S(r)\ge C$, with $S$ given by \eqref{Sdef} after substituting in terms of $\varphi$ and simplifying.

To prove breakdown of the solution to equation~\eqref{vectorfield} we can again use the comparison theorem, Lemma~\ref{lem:comparison}. Local Lipschitz continuity of the 
comparison function
\begin{equation}
 \tilde{F}(q)(r):=-C q(r) \int_r^{\infty} \frac{ \lvert z_0(s)\rvert/Q(s)}{q(s)} \, ds
\end{equation}
is proved in Proposition \ref{rhopowerslipschitzthm}. 
Finally the monotonicity property follows directly from the above inequality, which concludes the proof.
 \end{proof}

\section{Breakdown for the EPDiff equation}\label{sec:breakdownEPDiff}
In this section we will apply our general breakdown theorem to obtain our breakdown results for the $H^1$ metric in any dimension and the $\dot{H}^2$ and $H^2$ metrics in dimension $n\ge 3$, having already obtained the full breakdown result in the $\dot{H}^1$ case in Section~\ref{sec:radialHS}.

\subsection{The EPDiff equation of the homogeneous $\dot H^2$-metric on $\R^n$}\label{sec:homH2_generaldim}
We aim to prove breakdown of the EPDiff equation on $\mathbb R^n$ for radial solutions with the inertia operator $A=\Delta^2$ and $n\geq 3$. We want to emphasize that the analysis of this equation has to be taken with caution, as the kernel of the inertia operator $A$ can lead to significant difficulties, see also the comments in Remark \ref{rem:hom_metrics}.
\begin{theorem}\label{thm:H2dotbreakdown}
Let $n\geq 3$ and suppose that the initial momentum   satisfies $\omega_0(r)\le 0$ for all $r\ge 0$.
Then the solution of equation \eqref{gammadeltaform} with homogeneous $\dot{H}^2$ operator $\Delta^2$ with $\gamma(0,r) = r$
and $\gamma_r(0,r)=\rho(0,r)=1$
breaks down in finite time, in the sense that $\gamma_r(t,r)=\rho(t,r)=0$ for some $t>0$ and $r\ge 0$.
\end{theorem}

\begin{proof}[Proof of Theorem~\ref{thm:H2dotbreakdown}]
We aim to apply Theorem~\ref{Qtheorem}: 
The required local existence of solutions is guaranteed by Corollary~\ref{localexistenceC0}.
The positivity condition~\ref{main_cond1} follows from
\begin{equation*}
\varphi(r,s)=\frac{s^{-n}( (n+2) s^{2} - (n-2) r^2)}{2n(n-2)(n+2)}> \frac{s^{-n}( s^{2} -r^2)}{2n(n+2)} \ge 0,
\end{equation*}
for $(r,s)\in D$. %
For the log-supermodularity condition~\ref{main_cond4} we compute
\begin{equation}
   \frac{\partial^2}{\partial r\partial s} \ln{\varphi(r,s)} = \frac{4rs(n^2-4)}{\big[ (n+2) s^2 - (n-2) r^2\big]^2},
\end{equation}
which is obviously nonnegative for $n\ge 3$.
 For the last condition~\ref{main_cond5} we obtain after a straightforward computation that $S(r) = \frac{2(n-2)}{n+2}$. %
Thus all conditions of Theorem~\ref{Qtheorem} are satisfied. It remains to calculate the function $Q(r)$, which is given by $Q(r) =2n(n-2)r^{n-3}$.
As $Q(r)\neq 0$ this implies that $\rho$ reaches zero in finite time and thus we obtain the desired breakdown result.
\end{proof}

\subsection{The higher dimensional Camassa-Holm equation.}\label{sec:CH}
The higher dimensional Camassa-Holm equation corresponds to the EPDiff equation of the right invariant $H^1$-metric, i.e.,  it corresponds to equation~\eqref{eq:EPDiff} with $A=1-\Delta$ on $\mathbb R^n$.
In the following we will refer to this equation restricted to radial solutions as the radial, $n$-dimensional Camassa-Holm equation.
Breakdown for smooth solutions of this family of equations was already shown in~\cite{li2013euler}, Theorem 2.2. We present the result here since it is a simple application of our general technique. 

\begin{theorem}\label{thm:H1breakdown}
Suppose the initial momentum $\omega_0$ satisfies $\omega_0(r)\le 0$ for all $r\ge 0$.
Then the solution of equation \eqref{gammadeltaform} with $H^1$ operator $(1-\Delta)$ with $\gamma(0,r) = r$
and $\gamma_r(0,r)=\rho(0,r)=1$
breaks down in finite time, in the sense that $\gamma_r(t,r)=\rho(t,r)=0$ for some $t>0$ and $r\ge 0$.
\end{theorem}
\begin{proof}
The required local existence follows from  Proposition~\ref{prop:wellposedness}.
Next we recall from Lemma \ref{lem:greensfunctionnonhomog} that the function $\varphi$ from formula \eqref{H1fullinversion} is given by $\varphi(r,s) = \funca_n(r)\funcb_n(s)$.
We aim to apply Theorem~\ref{Qtheorem}: we will use several properties of these functions that are discussed in detail in Appendix \ref{greenfunctionproof}.
The positivity condition~\ref{main_cond1} follows directly from the positivity of the Bessel functions. %
Since $\varphi$ is a product of two functions of one variable, the log-supermodularity condition~\ref{main_cond4} is trivially satisfied.

For the last condition~\ref{main_cond5} we need to compute $S(r)$, and for this we compute first
$$\partial_1\varphi(r,r) = \funca_n'(r) \funcb_n(r) \qquad \text{and}\qquad \varphi(0,r) = \funcb_n(r),$$
using the fact that $\funca_n(0)=1$ for all $n$ by \eqref{besselasympssmall}. 
Then $S(r)$ simplifies to
$$    S(r) = \frac{r \funcb_n(r) \big[ \funcb_n(r)\funca_n'(r) - \funca_n(r) \funcb_n'(r)\big]}{\funcb_n(r)^2} 
    = \frac{1}{r^n \funcb_n(r)},
$$ %
using the Wronskian \eqref{besselwronskian}.
Since this function is always positive even at $r=0$ by the asymptotic formula \eqref{besselasympssmall} and approaches infinity as $r\to \infty$ by \eqref{besselasympsbig}, we see that $S(r)\ge C$ for some positive constant $C$ and all $r\ge 0$.

Thus all conditions of Theorem~\ref{Qtheorem} are satisfied, %
and this implies that $\rho$ reaches zero in finite time. Thus the radial, $n$-dimensional Camassa-Holm equation blows up.
\end{proof}

\subsection{The EPDiff equation of the $H^2$-metric.}\label{sec:H2}
We are now ready to tackle our main breakdown result for the EPDiff equation: breakdown of the  EPDiff equation~\eqref{eq:EPDiff} corresponding to $A=(1-\Delta)^2$ in dimensions $n\geq 3$. 
\begin{theorem}\label{thm:H2breakdown}
Let $n\geq 3$ and suppose that the initial momentum  $\omega_0$ satisfies $\omega_0(r)\le 0$ for all $r\ge 0$.
Then the solution of equation the $n$-dimensional EPDiff equation~\eqref{gammadeltaform} with $H^2$ inertia operator $(1-\Delta)^2$ and initial conditions $\gamma(0,r) = r$
and $\gamma_r(0,r)=\rho(0,r)=1$
breaks down in finite time, in the sense that $\gamma_r(t,r)=\rho(t,r)=0$ for some $t>0$ and $r\ge 0$.
\end{theorem}

\begin{proof}[Proof of Theorem~\ref{thm:H2breakdown}]
The required local existence follows from  Proposition~\ref{prop:wellposedness}.
Thus it only remains to check that $$\varphi(r,s)=\tfrac{1}{2}\big[ n\funca_n(r) \funcb_n(s) + \tfrac{1}{n} \funca_n(r) \funcb_{n-2}(s) - n\funca_{n-2}(r) \funcb_n(s)\big],$$
 as defined in~\eqref{H2fullinversion}, satisfies all conditions of Theorem~\ref{Qtheorem}. 
This is slightly easier if we write
\begin{equation}\label{deltaH2factored}
\varphi(r,s) = \frac{n}{2} \funca_n(r) \funcb_n(s) \big[ 1 + \ratiob(s) - \ratioa(r)\big], \qquad \ratioa(r) = \frac{\funca_{n-2}(r)}{\funca_n(r)}, \qquad \ratiob(s) = \frac{\funcb_{n-2}(s)}{n^2\funcb_n(s)}.
\end{equation}
Again we will need several properties of Bessel functions discussed in Appendix \ref{greenfunctionproof}.

From formulas \eqref{besselprimeup}--\eqref{besselprimedown} we can compute that $\ratioa(r)$ satisfies the Riccati equation
$$
\ratioa'(r) = \tfrac{n}{r} \Big( \tfrac{r^2}{n^2} + \ratioa(r) - \ratioa(r)^2\big)  = \tfrac{n}{r} \Big( \tfrac{1}{2} + \sqrt{\tfrac{1}{4} + \tfrac{r^2}{n^2}} - \ratioa(r)\Big)
\Big(  -\tfrac{1}{2} + \sqrt{\tfrac{1}{4} + \tfrac{r^2}{n^2}} + \ratioa(r)\Big).
$$
Since $\ratioa(0) = 1$ by formula \eqref{besselasympssmall}, we can use this Riccati equation to conclude that
\begin{equation}\label{ratioainequalities}
\ratioa(r) \le  \sqrt{\tfrac{1}{4} + \tfrac{r^2}{n^2}} + \tfrac{1}{2} \qquad\text{and}\qquad \ratioa'(r)\ge 0 \quad\text{ for all $r\ge 0$.}
\end{equation}
These inequalities are strict as soon as $r>0$.
Similarly $\ratiob(r)$ satisfies the Riccati equation
\begin{equation}\label{ratiobriccati}
\ratiob'(r) = \tfrac{n}{r} \big( -\tfrac{r^2}{n^2} + \ratiob(r) + \ratiob(r)^2\big) = \tfrac{n}{r} \Big(
 \ratiob(r) +  \tfrac{1}{2} + \sqrt{\tfrac{1}{4} + \tfrac{r^2}{n^2}}\Big) \Big(
\ratiob(r) +  \tfrac{1}{2} - \sqrt{\tfrac{1}{4} + \tfrac{r^2}{n^2}}\Big),
\end{equation}
and since $\lim_{r\to 0} \ratiob(r)=0$ by \eqref{besselasympssmall}, we conclude that
\begin{equation}\label{ratiobinequalities}
\ratiob(r)\ge  \sqrt{\tfrac{1}{4} + \tfrac{r^2}{n^2}}-\tfrac{1}{2} \qquad\text{and}\qquad \ratiob'(r)\ge 0 \quad\text{ for all $r\ge 0$.}
\end{equation}
Similarly these are strict when $r>0$.
See Laforgia-Natalini~\cite{laforgia2010some} for the details.

To check condition~\ref{main_cond1}, i.e., that $\varphi(r,s)>0$ for all $(r,s)\in D$, we use the factored form \eqref{deltaH2factored} and the inequalities \eqref{ratioainequalities}--\eqref{ratiobinequalities} to get
$$ \varphi(r,s) \ge \frac{n}{2} \funca_n(r) \funcb_n(s) \Big[ \sqrt{\tfrac{1}{4} + \tfrac{s^2}{n^2}} - \sqrt{\tfrac{1}{4} + \tfrac{r^2}{n^2}} \Big] \ge 0.$$
The first inequality is strict as soon as $r>0$ and $s\ge r$, while the second is strict as soon as $s>r$ even for $r=0$.

To prove condition \ref{main_cond4}, that $\partial_r\partial_s \ln{\varphi(r,s)}\ge 0$, we observe that
\begin{align*}
\frac{\partial^2}{\partial r\partial s} \ln{\varphi(r,s)} &= \frac{\partial^2}{\partial r\partial s}\Big(
\ln{\big[ \funca_n(r)\big]} + \ln{\big[ \funcb_n(s)\big]} + \ln{\big[ 1 + \ratiob(s) - \ratioa(r)\big]}\Big) \\
&= \frac{\partial}{\partial r} \frac{\ratiob'(s)}{1 +  \ratiob(s) - \ratioa(r)} = \frac{\ratioa'(r) \ratiob'(s)}{\big[ 1+\ratiob(s)-\ratioa(r)\big]^2}
\end{align*}
which is nonnegative using \eqref{ratioainequalities}--\eqref{ratiobinequalities}.

It remains to show that $S(r)$, as defined in \ref{main_cond5}, is bounded below by
a positive constant for all $r\in [0,\infty)$, i.e., that for some $C>0$ we have
\begin{equation}\label{numerator}
\numerator(r) := r\big[\partial_1\varphi(r,r) \varphi(0,r) - \varphi(r,r) \partial_2\varphi(0,r)\big] \ge C \varphi(0,r)^2 \text{ for all $r\ge 0$.}
\end{equation}
Therefore we calculate the terms appearing here. For convenience we define the function
$$ \jfunc(r) = n^2\big[ \funca_{n-2}(r) - \funca_n(r)\big],$$
so that formula \eqref{H2fullinversion} becomes
$$ \varphi(r,s) = \tfrac{1}{2n} \big[ \funca_n(r)\funcb_{n-2}(s) - \jfunc(r) \funcb_n(s)\big].$$
Since $\funca_n(0)=1$ for all $n$ by \eqref{besselasympssmall}, we have $\jfunc(0)=0$, and we obtain 
$$ \varphi(0,r) = \tfrac{1}{2n} \, \funcb_{n-2}(r), \qquad \partial_2\varphi(0,r) = -\tfrac{r}{2} \, \funcb_n(r).$$

Recalling the derivative formulas \eqref{besselprimeup}--\eqref{besselprimedown}, we compute that
\begin{equation}\label{jderivative}
r \jfunc'(r) = -n \jfunc(r) + n r^2 \funca_n(r), \qquad r \alpha_n'(r) = \tfrac{1}{n} \jfunc(r),
\end{equation}
and thus the quantity $r\partial_1\varphi(r,r)$ simplifies to
\begin{align*}
r \partial_1\varphi(r,r) &= \tfrac{1}{2n} \big[ r\funca_n'(r)\funcb_{n-2}(r) - r\jfunc'(r) \funcb_n(r)\big] \\
&= \tfrac{1}{2n} \big[ \tfrac{1}{n} \jfunc(r) \funcb_{n-2}(r) + n\jfunc(r) \funcb_n(r) - n r^2 \funca_n(r) \funcb_n(r)\big].
\end{align*}

Plugging into the quantity $\numerator(r)$ from \eqref{numerator}, we get
\begin{align*}
\numerator(r) &= \tfrac{1}{4n^2} \Big( \funcb_{n-2}(r)\big[ \tfrac{1}{n}\jfunc(r) \funcb_{n-2}(r) + n\jfunc(r) \funcb_n(r) - nr^2 \funca_n(r) \funcb_n(r)\big] \\
&\qquad\qquad+ nr^2 \funcb_n(r) \big[ \funca_n(r) \funcb_{n-2}(r) - \jfunc(r) \funcb_n(r)\big]\Big) \\
&= \tfrac{1}{4n^2} \jfunc(r) \Big( \tfrac{1}{n} \funcb_{n-2}(r)^2 + n \funcb_{n-2}(r)\funcb_n(r) - nr^2 \funcb_n(r)^2 \Big).
\end{align*}
 Thus $S(r)$ is given by
$$ S(r) =  \frac{\jfunc(r) \big[ \tfrac{1}{n} \funcb_{n-2}(r)^2 + n \funcb_{n-2}(r)\funcb_n(r) - n r^2 \funcb_n(r)^2 \big]}{\funcb_{n-2}(r)^2}
= \frac{\jfunc(r)}{n \ratiob(r)^2} \big[ \ratiob(r)^2 + \ratiob(r) - \tfrac{r^2}{n^2}\big].$$
Since  $\jfunc(r) = nr\funca_n'(r) > 0$ for $r>0$ by \eqref{ratioainequalities}, and since 
the term in square brackets is positive for $r>0$ by \eqref{ratiobinequalities}, we conclude that $S(r)>0$ for all $r> 0$.

To show $S(r)$ is bounded below by a positive constant $C$, it is sufficient to show that $S(r)$ cannot approach zero as $r\to 0$ or as $r\to\infty$.
For this purpose we use the asymptotic formulas \eqref{besselasympssmall}--\eqref{besselasympsbig}.
By formula \eqref{besselasympssmall} we know that $\lim_{r\to 0} r^n \funcb_n(r) = \tfrac{1}{n}$, so that
\begin{align*}
\lim_{r\to 0} S(r) &= \lim_{r\to 0} \frac{j(r)}{r^2} \, \lim_{r\to 0} \left( \frac{r^2}{n}  + n \,\frac{r^n\funcb_n(r)}{r^{n-2}\funcb_{n-2}(r)} - n\frac{r^{2n} \funcb_n(r)^2}{r^{2n-4} \funcb_{n-2}(r)^2} \right) \\
&= \frac{n}{n+2} \left[ n \Big(\frac{n-2}{n}\Big) - n \Big( \frac{n-2}{n}\Big)^2 \right] \\
%
%&= \frac{1}{n+2} \big[ n (n-2) - (n-2)^2\big] 
&= \frac{2(n-2)}{n+2}.
\end{align*}
Here we computed $\lim_{r\to 0} \frac{j(r)}{r^2}=\frac{n}{n+2}$ using L'Hopital's rule and formula \eqref{jderivative}. For $n\ge 3$ we see that $\lim_{r\to 0} S(r)$ is positive.

On the other hand as $r\to \infty$ it is easy to see from \eqref{besselasympsbig} that $S(r)\to \infty$ like $r^m e^r$ for some power $m$, and in particular it does not approach zero. So $S(r)$ has a positive minimum value at some $r\ge 0$, and condition \ref{main_cond5} is verified.
\end{proof}

\section{Future Work and Conclusions}\label{sec:conclusion}
\subsection*{A summary of global existence and breakdown results for the EPDiff equations:}
By the results of this article and the breakdown results of~\cite{chae2012blow,li2013euler}, we know that solutions to the $H^k$ and $\dot{H}^k$ EPDiff equations blow up in any dimension if $k\in\{0,1\}$, and that they blow up
in dimension $n\ge 3$ for $k=2$. Combining this  with the global existence results for $k>\frac{n}{2}+1$~\cite{escher2014geodesic,mumford2013euler,bauer2015local,bauer2023regularity,ebin1970groups}, this gives a complete characterization for which (integer order) EPDiff equations all solutions exist for all time if $n=1$ or $n=3$. 
Using the geometric interpretation of the EPDiff equation as an Euler-Arnold equation on the diffeomorphism group, cf. Section~\ref{sec:EulerArnold}, directly leads to the following characterization of geodesic (in)completeness for the corresponding diffeomorphism group in dimensions one and three:
\begin{corollary}\label{cor:geodesiccomplete}
Consider the diffeomorphism group $\operatorname{Diff}(\mathbb R^n)$  equipped with the right-invariant Sobolev metric $G^k$ of order $k\in \mathbb N$. We have:
\begin{itemize}
    \item if $n=3$ then $(\operatorname{Diff}(\mathbb R^3),G^k)$ is geodesically complete if and only if $k\geq 3$, i.e., for any $k\geq 3$ and any initial conditions $U_0\in H^{\infty}(\mathbb R^3,\mathbb R^3)$, the solution to the geodesic equation (EPDiff equation, resp.) exists for all time $t$, whereas for any $k\in \{0,1,2\}$, there exist initial conditions $U_0\in H^{\infty}(\mathbb R^3,\mathbb R^3)$ such that the solution breaks down in finite time.
    \item if $n=1$, then $(\operatorname{Diff}(\mathbb R),G^k)$ is geodesically complete if and only if $k\geq 2$. 
\end{itemize}
\end{corollary}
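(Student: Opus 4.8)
The plan is to derive the statement by assembling results that are already in place: the global existence theorem recalled in Proposition~\ref{prop:wellposedness}, the blowup theorems of this paper together with the classical blowup results in the literature, and the identification (Proposition~\ref{thm:EPDiff}) of geodesics of $G^k$ with solutions of the EPDiff equation~\eqref{eq:EPDiff} for the inertia operator $A=(1-\Delta)^k$ (with $A=\mathrm{id}$ when $k=0$). The key observation is that, by right-invariance, $(\operatorname{Diff}(\mathbb{R}^n),G^k)$ is geodesically complete if and only if every solution of the EPDiff equation with initial velocity $U_0\in H^\infty(\mathbb{R}^n,\mathbb{R}^n)$ exists for all time, so it suffices to work with geodesics issuing from $\mathrm{id}$.

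For the completeness direction I would simply invoke Proposition~\ref{prop:wellposedness}: if $k>\tfrac{n}{2}+1$, every $H^\infty$ solution of EPDiff is global. For $n=3$ this condition reads $k\ge 3$, and for $n=1$ it reads $k\ge 2$, which gives the asserted completeness in both cases.

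For the incompleteness direction I would, for each remaining value of $k$, produce one geodesic from $\mathrm{id}$ that leaves $\operatorname{Diff}(\mathbb{R}^n)$ in finite time. When $k=0$ the metric is the $L^2$ metric, whose geodesic equation is (up to rescaling in $n=1$) the inviscid Burgers equation; blowup for $n=1$ is classical and for $n\ge 2$ it is due to Chae and Liu~\cite{chae2012blow}. When $n=1$ and $k=1$ the geodesic equation is the Camassa-Holm equation, whose wave breaking is well known~\cite{camassa1993integrable,constantin1998wave,mckean2015breakdown}. For $n=3$ and $k\in\{1,2\}$ I would use our own results: choose any radial $\omega_0\in H^\infty(\mathbb{R}^3)$, not identically zero, with $\omega_0(r)\le 0$ for all $r\ge 0$, and set $U_0=(1-\Delta)^{-k}(\omega_0(r)\,\partial_r)$; since $(1-\Delta)^{-1}$ maps $H^s$ isomorphically onto $H^{s+2}$, the field $U_0$ is again a radial $H^\infty$ vector field, and Proposition~\ref{prop:wellposedness} provides a smooth geodesic $\eta$ on a maximal interval $[0,T_{\max})$. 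By Lemma~\ref{lem:radialsolutionsEPDiff} this geodesic stays radial, so $\eta(t,x)=\gamma(t,r)x/r$ with $(\gamma,\rho)=(\gamma,\gamma_r)$ solving~\eqref{gammadeltaform}; Theorem~\ref{thm:H1blowup} (for $k=1$) and Theorem~\ref{thm:H2blowup} (for $k=2$) then yield a finite $T$ and an $r_0\ge 0$ with $\gamma_r(t,r_0)\to 0$ as $t\nearrow T$. Because $\det D\eta(t,\cdot)$ at $|x|=r_0$ equals $\gamma_r(t,r_0)(\gamma(t,r_0)/r_0)^{n-1}$ (or $\rho(t,0)^n$ if $r_0=0$), this forces $\det D\eta\to 0$, so $\eta$ cannot converge in $\operatorname{Diff}(\mathbb{R}^3)$ as $t\nearrow T$: the geodesic is not extendable past $T\le T_{\max}$, hence $T_{\max}<\infty$. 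Together these cover $k\in\{0,1,2\}$ for $n=3$ and $k\in\{0,1\}$ for $n=1$, which is exactly the incompleteness range.

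The part that needs the most care is not any single computation but the bookkeeping linking the reduced radial picture to genuine geodesic incompleteness: one must (i) match the Sobolev metric $G^k$, $k\ge 1$, to the \emph{non-homogeneous} inertia operator $(1-\Delta)^k$, so that Theorems~\ref{thm:H1blowup} and~\ref{thm:H2blowup} (rather than their homogeneous $\dot H^k$ analogues) are the ones that apply; (ii) check that the chosen $\omega_0$ indeed produces $U_0\in H^\infty$, so that the smooth local theory of Proposition~\ref{prop:wellposedness} applies and Lemma~\ref{lem:radialsolutionsEPDiff} keeps the solution radial; and (iii) invoke Lemma~\ref{blowuplemma} and the accompanying discussion to conclude that $\gamma_r\to 0$ forces the $C^1$ norm of $U$ to blow up and the flow to exit $\operatorname{Diff}(\mathbb{R}^n)$, so that the geodesic truly fails to extend. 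All of this has been prepared in the body of the paper, so the proof itself is short once these pieces are lined up.
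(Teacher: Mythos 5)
Your proposal is correct and follows essentially the same route as the paper: the corollary is obtained by combining the global existence threshold $k>\tfrac n2+1$ from Proposition~\ref{prop:wellposedness} with the blowup results for $k\in\{0,1,2\}$ (Chae--Liu for $k=0$, classical Camassa--Holm wave breaking for $n=1$, $k=1$, and Theorems~\ref{thm:H1blowup} and~\ref{thm:H2blowup} for $n=3$), translated into geodesic (in)completeness via the Euler--Arnold interpretation and Lemma~\ref{blowuplemma}. Your extra bookkeeping (radial $H^\infty$ data with nonpositive momentum, persistence of radiality, and the Jacobian argument showing the flow exits $\operatorname{Diff}(\mathbb R^n)$) just makes explicit what the paper leaves as a direct consequence of its preceding results.
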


\subsection*{Open questions for EPDiff equations I: critical indices and higher dimensions}
Note that we did not obtain a full characterization for which integers $k$ the corresponding EPDiff equations are globally well-posed if $n=2$ or if $n>3$. In dimension two it only remains to resolve the case $k=2$: the difficulty here is that this is exactly the critical index for the Sobolev embedding theorem---$H^2$ functions are almost but not quite guaranteed to be $C^1$---and the present technique does not work in that case. We have explicitly computed the quantity $S(r)$ in any dimension and found that its value at $r=0$ is proportional to $(n-2)$; in particular it is strictly positive when $n>2$ but approaches zero at one point in dimension $n=2$. One might at first think this is a mild obstacle, but in fact it turns out to change everything: the evidence based on preliminary estimates is that solutions exist globally.  In dimension $n>3$, it remains to prove breakdown for higher integer values of $k<\frac{n}{2}+1$. Based on the pattern here, we conjecture the following:
\begin{conjecture}\label{integerconjecture}
    For $k\in \mathbb{N}$, all solutions of the Euler-Arnold equation \eqref{mainomega} exist for all time if $k\geq \frac{n}{2}+1$, while there are always some solutions that break down in finite time if $k<\frac{n}{2}+1$. Consequently, the space $(\operatorname{Diff}(\mathbb R^n),G^k)$ is geodesically complete if and only if $k\geq \frac{n}{2}+1$. 
\end{conjecture}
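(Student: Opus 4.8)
\emph{Towards a proof of Conjecture~\ref{integerconjecture}.}
The statement splits into a global-existence half ($k\ge\tfrac n2+1$) and a blowup half ($k<\tfrac n2+1$), the geodesic-(in)completeness claim then following formally from the two exactly as in Corollary~\ref{cor:geodesiccomplete}. The global half is essentially classical: for $k>\tfrac n2+1$ the $H^k$ inner product extends to a smooth \emph{strong} right-invariant Riemannian metric on the Hilbert manifold $\mathcal D^s(\mathbb R^n)$ of Sobolev diffeomorphisms with $s$ large, this metric is metrically and hence geodesically complete, and an Ebin--Marsden no-loss-no-gain argument carries completeness back to the $H^\infty$ category (this is the integer specialization of the references behind Proposition~\ref{prop:wellposedness}). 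The only genuinely new value in this half is the single integer $k=\tfrac n2+1$, which occurs precisely when $n$ is even; the case $n=2,\ k=2$ is treated in forthcoming work, where $S$ degenerates at $r=0$ and radial global existence is established by a different argument, and one expects that analysis to generalize. So the substance of the conjecture is the blowup half.

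For the blowup half, the cases $k\in\{0,1\}$ are known \cite{chae2012blow,li2013euler}, and $k=2$ with $n\ge3$ is Theorems~\ref{thm:H2dotblowup} and~\ref{thm:H2blowup}; the remaining cases are $k\ge3$ with $2(k-1)<n$, i.e.\ $n\ge2k-1$. For each such $(n,k,\sigma)$ the plan is to invoke the general blowup criterion Theorem~\ref{Qtheorem}, which needs three ingredients. (i) The Green's function of $(\sigma-\Delta)^k$ on radial vector fields, extending Lemma~\ref{lem:greensfunction}: for $\sigma=0$ the inverse kernel should take the form $\varphi_{n,k}(r,s)=s^{2(k-1)-n}\,p_{n,k}\!\big(r^2/s^2\big)$, with $p_{n,k}$ an explicit polynomial of degree $k-1$ and $p_{n,k}(0)>0$, whose coefficients carry denominators $n,\ n\pm2,\ n\pm4,\dots$, so that well-definedness of the $k$-fold inversion on radial rapidly-decaying fields is exactly the condition $2(k-1)<n$, i.e.\ $k<\tfrac n2+1$; for $\sigma=1$ it is instead a fixed linear combination of products $\funca_{n-2i}(r)\funcb_{n-2j}(s)$, $0\le i,j\le k-1$, of the Bessel building blocks \eqref{besseldef}. (ii) Local existence of solutions of \eqref{gammaODE}--\eqref{rhoODE}, which for $\sigma=1$ is Proposition~\ref{prop:wellposedness} and for $\sigma=0$ requires extending the particle-trajectory argument of Appendix~\ref{sec:app_well} from $k\le2$ to general $k$. (iii) Verification of hypotheses \ref{main_cond1}, \ref{main_cond4} and \ref{main_cond5} for $\varphi_{n,k}$.

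Ingredient (iii) is where I expect the real work to lie. In the homogeneous case all three hypotheses reduce to one-variable statements about $p:=p_{n,k}$ on $x\in[0,1]$ (with $x=r^2/s^2$): positivity \ref{main_cond1} is $p(x)>0$ on $[0,1]$ with minimum at $x=1$ as in the cases $k\le2$; log-supermodularity \ref{main_cond4} reduces, after the change of variables, to $x\,p'(x)/p(x)$ being non-increasing on $[0,1]$ (equivalently $x\big((p')^2-p\,p''\big)\ge p\,p'$); and a direct computation collapses $S(r)$ to the \emph{constant} $\big(2p'(1)+(n-2k+2)\,p(1)\big)/p(0)$, whose positivity is governed precisely by $n-2k+2>0\Leftrightarrow k<\tfrac n2+1$, matching the conjectured threshold (for $k=2$ this returns $S=2(n-2)/(n+2)$). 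The obstacle is that the $p_{n,k}$ are not products of a function of $r$ times one of $s$, so these polynomial inequalities have no obvious verification uniform in $k$; one would want either a product formula for the coefficients of $p_{n,k}$ or an inductive identity realizing $\varphi_{n,k}$ as a spatial $(-\Delta)^{-1}$ of $\varphi_{n,k-1}$, along which positivity, the convexity inequality, and the sign of $S$ all propagate. In the non-homogeneous case the same three hypotheses must be checked for a $k$-term Bessel combination: the route is to generalize the factorization and Riccati computations from the proof of Theorem~\ref{thm:H2blowup}, working with the ratios $\ratioa^{(j)}=\funca_{n-2j}/\funca_n$ and $\ratiob^{(j)}=\funcb_{n-2j}/\funcb_n$ (which satisfy Riccati equations and inherit monotonicity and convexity from standard Bessel-ratio estimates), and to read off the $r\to0$ and $r\to\infty$ behaviour of $S(r)$ from \eqref{besselasympssmall}--\eqref{besselasympsbig}, again getting a positive rational limit at $0$ and $S(r)\to\infty$ at infinity. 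Organizing this uniform-in-$k$ bookkeeping --- above all, establishing log-supermodularity for the multi-term kernels --- is the crux; the rest is an assembly of pieces already in the paper.
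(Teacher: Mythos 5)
The statement you are addressing is Conjecture~\ref{integerconjecture}, which the paper deliberately leaves open: the authors themselves only assert that the extension to higher $k$ ``should'' work and flag the critical index as a serious difficulty. Your text is a research program, not a proof, and you concede as much at the decisive points. Concretely: (i) in the blowup half, every ingredient beyond $k\le 2$ is conjectural --- the closed form $\varphi_{n,k}(r,s)=s^{2(k-1)-n}p_{n,k}(r^2/s^2)$ (resp.\ the $k$-term Bessel combination), the collapse of $S(r)$ to a constant proportional to $n-2k+2$, and above all log-supermodularity \ref{main_cond4} for kernels that no longer factor as a product of a function of $r$ and a function of $s$; you correctly identify this as the crux, but identifying the crux is not the same as resolving it, and without it Theorem~\ref{Qtheorem} cannot be invoked. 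Local existence for the homogeneous equations with $k\ge 3$ (the analogue of Appendix~\ref{sec:app_well}) is likewise asserted rather than carried out, and it is a standing hypothesis of Theorem~\ref{Qtheorem}.

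(ii) The global half is not ``essentially classical'' at the endpoint. The strong-metric/Ebin--Marsden argument cited behind Proposition~\ref{prop:wellposedness} requires the strict inequality $k>\tfrac n2+1$; the conjecture includes equality, which occurs for every even $n$. The paper states explicitly that even for $n=2$, $k=2$ the quantity $S$ degenerates, a genuinely different method is needed, the forthcoming result covers only \emph{radial} solutions, and the extension to general (non-radial) data is expected to be substantially harder --- yet geodesic completeness of $(\operatorname{Diff}(\mathbb R^n),G^k)$ at $k=\tfrac n2+1$ requires global existence for \emph{all} initial data, not just radial. ``One expects that analysis to generalize'' is precisely the open problem, not an argument. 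So the proposal matches the paper's own intended route (generalized Green's functions, Theorem~\ref{Qtheorem}, asymptotics of $S$), but it does not close any of the gaps that make the statement a conjecture rather than a theorem.
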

We believe the extension of the breakdown results to higher $k$ ($n$, resp.) to be computationally involved but rather straightforward. We foresee bigger difficulties in obtaining the global existence for the critical Sobolev indices $k=\frac{n}{2}+1$ with $n$ even: %We believe that 
it will likely require substantially more complicated computations to extend the analysis in the one-dimensional case as in \cite{bauer2020geodesic} to general (non-radial) initial conditions.

\subsection*{Open questions for EPDiff equations II: fractional orders}
Beyond this we can study noninteger values of $k$, where the operator $(\aconst - \Delta)^k$ can be defined using the Fourier transform. These situations are indeed very relevant in modeling fluid mechanics; for example the case $k=1/2$ in dimension $n=1$ is related to the De Gregorio equation~\cite{de1990one,de1996partial} and to the Okamoto-Sakajo-Wunsch family of equations~\cite{okamoto2008generalization}; and in dimension two the case $k=-\frac{1}{2}$ is related to the surface quasigeostrophic (SQG) equation. The main difference between these situations and the integer order metrics studied in the present article is that the kernel $\delta(r,s)$ factors as a sum of products of functions of $r$ and $s$ separately in the integer case, while no such factorization is possible in general. Nonetheless we believe that our general technique will probably apply in the fractional case, and that we can extend Conjecture \ref{integerconjecture} word for word to the case of any real $k$.  Note that when $n=1$, the critical index is $k=\frac32$, which is related to the Weil-Petersson metric~\cite{gay2015geometry}. In this case, it has been shown that the equations are indeed globally wellposed; see \cite{PrestonWashabaugh, bauer2020geodesic}. Note that for $k< \tfrac12$ we do not obtain an ODE on a Banach space, e.g., for $k=0$ this has been shown by Constantin and Kolev in~\cite{constantin_kolev}. This will require a different breakdown analysis for these low-order cases, as the ODE interpretation is a central ingredient of the current criterion.

\appendix

\section{Proofs of Lemmas \ref{lem:greensfunctionhomog} and \ref{lem:greensfunctionnonhomog}, Green's functions for the Laplacian}\label{greenfunctionproof}

\begin{proof}[Proof of Lemma \ref{lem:greensfunctionhomog}]
{We want to prove that if $j$ is a nonnegative integer and $\power$ is a real number such that $0<\power<n$, then $\omega \in Q^{j}_{\power+1}$ implies that there is a unique solution 
$u \in Q^{j+2}_{\power-1}$ of the equation $\Delta(u\partial_r) = -\omega\partial_r$. Assume $\omega\in \mathcal{Q}^{j}_{\power+1}$; recall from the definition \eqref{Qspacedef} that $\displaystyle \limsup_{r\to\infty} r^{\power+1}\lvert \omega(r)\rvert < \infty$, and that $\omega^{(j)}$ is locally in $L^1$, with $\omega^{(2i)}(0)=0$ for $0\le 2i<j$.
We want to prove that the solution $u$ will satisfy 
$\displaystyle \limsup_{r\to\infty} r^{\power-1} \lvert u(r)\rvert < \infty,$
and that $u^{(j+2)}$ is locally $L^1$, with 
%all even derivatives 
$u^{(2i)}(0)=0$ for $0\le 2i\le j$.}

{Recalling formula \eqref{eq:vectorLaplace}, it is easy to see that we may rewrite the equation $\Delta(u\partial_r)=\omega\partial_r$ as 
\begin{equation}\label{ubarformhomog} 
\frac{1}{r^{n+1}} \, \frac{d}{dr} \Big(r^{n+1} \ubar'(r)\Big) = -\frac{\omega(r)}{r}, \qquad \ubar(r):=\frac{u(r)}{r}.
\end{equation}
We are demanding that $u(0)=0$ and $u$ is at least $C^1$, which implies that 
$$ \lim_{\sigma\to 0^+} \sigma^{n+1}\ubar'(\sigma) = \lim_{\sigma\to 0^+} \sigma^n \left( u'(\sigma) - \frac{u(\sigma)}{\sigma}\right) = 0,$$
in addition to implying that $\ubar(0)$ is finite. As such, multiplying \eqref{ubarformhomog} by $r^{n+1}$ and integrating from $r=0$, %we obtain 
%Now for $n>1$, the vector field $u(r)\,\partial_r$ is only smooth on $\mathbb{R}^n$ if 
%$u(r)$ is a smooth \emph{odd} function of $r$; in that case we can write $u(r) = r \ubar(r)$, where $\ubar$ is 
%a smooth \emph{even} function of $r$, i.e., a smooth function on $\mathbb{R}^n$. 
%Then we can compute that 
%$$ \Delta\big( u(r)\,\partial_r\big) = r \OvDel\ubar(r)\,\partial_r, \qquad \OvDel := \partial_r^2 + \tfrac{n+1}{r} \, \partial_r.$$
%The operator $\OvDel$ is obviously easier to work with than the vector Laplacian, and we will do so throughout this proof.
%Thus solving $\Delta\big(u(r) \,\partial_r) = -\omega(r) \, \partial_r$ for a vector field is equivalent to solving $\OvDel \ubar(r) = -\omegabar(r)$ 
%for a function, with $u(r) = r \ubar(r)$ and $\omega(r) = r\omegabar(r)$. %{Note that this will affect the growth rates as $r\to\infty$.} 
%
%
%We start by 
%Writing the equation $\OvDel\ubar(r) = -\omegabar(r) = -\omega(r)/r $ in the form
%$$\omega(r) = -\frac{1}{r^n}  \frac{d}{dr}\Big( r^{n+1}\ubar'(r)\Big),$$
%multiplying by $r^n$, and integrating both sides from $0$ to $\sigma$, (using $r\ubar(0) = u(0)=0$),
we obtain
\begin{equation}\label{ubarprimeformula}
    \sigma^{n+1} \overline{u}'(\sigma) = -\int_0^{\sigma} s^n \omega(s)\,ds.
    \end{equation}
}

{Now integrate from $0$ to $r$, then change integration order by Fubini's Theorem (since $\omega$ is at least locally $L^1$) to get 
$$
\ubar(r) - \ubar(0) = - \int_0^r \int_0^{\sigma} \frac{s^n\omega(s)}{\sigma^{n+1}} \, ds\,d\sigma 
= - \int_0^r \int_s^r \frac{s^n \omega(s)}{\sigma^{n+1}} \, d\sigma \, ds
= \frac{1}{n} \int_0^r s^n \omega(s) (r^{-n} - s^{-n}) \, ds, $$
which leads to
\begin{equation}\label{ubarformula}
    \ubar(r) = \ubar(0)  - \frac{1}{n} \int_0^r \omega(s)\,ds + \frac{1}{nr^n} \int_0^r s^n \omega(s) \, ds.
\end{equation}}

{At the moment $\ubar(0)$ is undetermined. To find it we observe that by our assumptions on $\omega$, 
\begin{equation}\label{decayspaceexplicit}
\exists R>0, \exists C>0 \text{ s.t. } r\ge R \; \Longrightarrow \lvert \omega(r)\rvert \le \frac{C}{r^{\power+1}}.
\end{equation}
Since $\power>0$, this implies that $\omega$ is $L^1$ on all of $[0,\infty)$, not just locally, and that the limit of the middle term in \eqref{ubarformula} is finite. On the other hand we can show that the limit of the last term is zero: in fact that term is in $\decayspace_{\power}(\mathbb R_{\geq 0},\mathbb R)$ since
for $r\ge R$ we have 
\begin{align*}
    r^{\power} \left\lvert r^{-n}\int_0^r s^n \omega(s)\,ds\right\rvert 
    &\le r^{\power-n} \left(  \int_0^R s^n \lvert \omega(s)\rvert \,ds + C \int_R^r s^{n-1-\power}\,ds \right) \\
    &\le r^{\power-n} \left( \int_0^R s^n \lvert \omega(s)\rvert \,ds + \frac{C}{n-\power} (r^{n-\power}-R^{n-\power}) \right)  \\
    &\le M r^{\power-n} + \frac{C}{n-\power}, \qquad M:=\int_0^R s^n\lvert \omega(s)\rvert \, ds - \frac{C}{n-\power} R^{n-\power}
\end{align*}
which is bounded for $r\ge R$ since $n>\power$.}

{Thus the only way to hope for $\ubar(r)$ to decay (and thus for $u(r)$ to be in $\decayspace_{\power+1}$) is if 
\begin{equation}\label{ubarzero}
    \overline{u}(0) = \frac{1}{n} \int_0^{\infty} \omega(s)\,ds,
\end{equation}
which then turns \eqref{ubarformula} into 
\begin{equation}\label{newubarformula}
\ubar(r) = \frac{1}{n r^n} \int_0^r s^n \omega(s)\,ds + \frac{1}{n} \int_r^{\infty} \omega(s)\,ds; 
\end{equation}
thus if there is a solution, it must satisfy this equation. It remains to check that $u(r)=r\ubar(r)$ satisfies the desired conditions: we need $u\in \decayspace_{\power-1}$ (which is equivalent to $\ubar\in \decayspace_{\power}$), and we need 
$u\in W^{j+2,1}_{\text{odd}}$. For decay, we have already seen that the first term in \eqref{newubarformula} is in $\decayspace_{\power}$, so we just need to check the second term. For $r\ge R$, we have by \eqref{decayspaceexplicit} that 
\begin{align*}
    r^{\power} \left\lvert \int_r^{\infty} \omega(s) \,ds \right\rvert 
    &\le C r^{\power} \int_r^{\infty} \frac{ds}{s^{\power+1}} = \frac{C}{\power}, 
\end{align*}
which is precisely what we need. Hence the formula \eqref{newubarformula} defines a function $\ubar$ which is in the correct decay space $\decayspace_{\power}$, and thus $u\in\decayspace_{\power-1}$.
}

{Finally we need to check that $\omega\in W^{j,1}_{\text{odd}}$ implies that $u\in W^{j+2,1}_{\text{odd}}$. This is easiest to do using the formula \eqref{ubarprimeformula}. In fact it is easy to show by induction that for any nonnegative integer $i$, if $\omega^{(i)}\in L^1[0,r]$, then we have 
\begin{equation}\label{ubarinduction}
\ubar^{(i+1)}(r) = -r^{-n-i-1} \int_0^r s^{n+i} \omega^{(i)}(s)\,ds.
\end{equation}
The base case is precisely \eqref{ubarprimeformula}, and the inductive step follows by an integration by parts and the product rule. We immediately conclude that if $\omega^{(j)}$ is locally in $L^1$, then $\ubar^{(j+1)}$  and thus $u^{(j+1)}$ are absolutely continuous for $r>0$. 
}

{All that remains is to check that $u^{(2i)}(0)=0$ if $2i<j$. For $i=0$ we get that $u(0) = r \ubar(0)=0$ by \eqref{ubarzero}. The iterated product rule on $u(r)=r\ubar(r)$ shows that for $i\ge 1$, we have $u^{(2i)}(0)=0$ if and only if $\ubar^{(2i-1)}(0)=0$, and 
an integration by parts of \eqref{ubarinduction} with $i$ replaced by $2i-2$ shows that 
$$ \ubar^{(2i-1)}(0) = \frac{1}{n+2i-1} \left( -\omega^{(2i-2)}(r) + r^{-n-2i+1} \int_0^r s^{n+2i-1} \omega^{(2i-1)}(s)\,ds\right).$$
Since $2i-1\le j$, we see that $\omega^{(2i-1)}(s)$ is locally in $L^1$, so the second term in the numerator approaches zero as $r\to 0$, while the first term is zero by our assumption $\omega\in W^{j,1}_{\text{odd}}$. 
}

{Formula \eqref{newubarformula} yields precisely 
\eqref{generalwsoln} in the case \eqref{H1dotinversion} which covers the case $k=1$ for any $n\ge 1$, since we can find $\power$ such that $0<\power<n$. We have seen that if $\omega\in \decayspace_{\power+1}$ and $0<\power<n$, then $\Delta^{-1}\omega\in \decayspace_{\power-1}$. As long as $0<\power-2<n$ as well, we can continue this, and obtain an iterated formula for $\Delta^{-2}$; note that this now requires $n\ge 3$, and similarly operators $\Delta^{-k}$ are only defined on spaces with decay condition $\decayspace_{\power+1}$ where $2(k-1)<\power<n$.
}

{We can easily obtain the explicit formula for the solution of $\Delta^2(u(r)\,\partial_r) = \omega(r)\,\partial_r$ by iterating the formula above:
using $K_1(r,s) := \delta_1\big(\min\{r,s\}, \max\{r,s\}\big)$ to denote the Green function for $\Delta^{-1}$, we can write 
$$ \Delta^{-1}(\omega)(r) = \int_0^{\infty} K_1(r,s) \, s^{n-1}\omega(s)\,ds.$$
Iterating this we get 
$$ 
\Delta^{-2}(\omega)(r) = \int_0^{\infty}   \left[\int_0^{\infty} \sigma^{n-1} K_1(r,\sigma) K_1(\sigma,s) \, d\sigma \right] s^{n-1}\omega(s)\,ds
= \int_0^{\infty} K_2(r,s) \, s^{n-1}\omega(s)\,ds,
$$ 
where 
$$ K_2(r,s) = \delta_2(r,s) = rs \varphi_2(r,s) \text{ for $r\le s$},$$
and 
$$ \varphi_2(r,s) = \int_0^{\infty} \sigma^{n+1} \varphi_1\big(\min\{r,\sigma\}, \max\{r,\sigma\}\big) \varphi_1\big(\min\{\sigma,s\}, \max\{\sigma,s\}\big) \, d\sigma.$$
This integral is easy to compute for $r\le s$ using $\varphi_1(r,s) = \tfrac{1}{n} s^{-n}$ by breaking up the intervals, and we get 
\begin{align*} 
\varphi_2(r,s) &= \tfrac{1}{n^2} \int_0^r \sigma^{n+1} r^{-n} s^{-n} \,d\sigma + \tfrac{1}{n^2} \int_r^s \sigma^{n+1} \sigma^{-n} s^{-n} \,d\sigma + \tfrac{1}{n^2} \int_s^{\infty} \sigma^{n+1} \sigma^{-n}\sigma^{-n} \, d\sigma \\
&= \frac{s^{2-n}}{2n(n-2)} - \frac{r^2 s^{-n}}{2n(n+2)},
\end{align*}
which is \eqref{H2dotinversion}.} %Note that the last integral $\int_s^{\infty} \sigma^{1-n}\,d\sigma$ only converges for $n\ge 3$. This reflects the fact that even if $\omega$ is rapidly decreasing, $\Delta^{-1}\omega$ is not: it will only decay like $r^{1-n}$, and there is no guarantee that we can invert the Laplacian on $\mathbb{R}^n$ on functions that decay slowly. See Remark \ref{rem:hom_metrics}.}
\end{proof}

\begin{proof}[Proof of Lemma \ref{lem:greensfunctionnonhomog}]
{For the nonhomogeneous operator $(1-\Delta)$, it is again simpler to solve the equation $(1-\OvDel)\ubar(r) = \overline{\omega}(r)$, as in the previous proof, where $u(r)=r\ubar(r)$ and $\omega(r)=r\overline{\omega}(r)$, with 
$\OvDel = \partial_r^2 + \frac{n+1}{r} \partial_r$.}
We need some special properties of the functions $\funca_p$ and $\funcb_p$ defined in \eqref{besseldef}.
Standard formulas (see e.g., Gradshteyn-Ryzhik~\cite{gradry} (8.486)) show that the derivatives satisfy
\begin{alignat}{3}
\funca_p'(r) &= \tfrac{r}{p+2} \funca_{p+2}(r), \qquad &\qquad \funcb_p'(r) &= -(p+2) r \funcb_{p+2}(r) \label{besselprimeup} \\
r\funca_p'(r) &= -p \funca_p(r) + p \funca_{p-2}(r), & r \funcb_p'(r) &= -p \funcb_p(r) - \tfrac{1}{p} \funcb_{p-2}(r). \label{besselprimedown}
\end{alignat}
From formulas \eqref{besselprimeup}--\eqref{besselprimedown} we can compute that
\begin{equation}\label{inductivelaplacebessel}
(1-\OvDel) \funca_p(r) = \frac{p-n}{p+2} \funca_{p+2}(r) \qquad \text{and}\qquad (1-\OvDel) \funcb_p(r) = (p+2) (n-p) \funcb_{p+2}(r),
\end{equation}
for any $p\in\mathbb{R}$ and any $n\in\mathbb{N}$, which makes these functions especially convenient
for handling the operator $(1-\OvDel)$. In particular we see that
\begin{equation}\label{fundysolns}
(1-\OvDel) \funca_n = 0, \qquad (1-\OvDel) \funca_{n-2} = -\tfrac{2}{n} \funca_n, \qquad (1-\OvDel)\funcb_n=0, \qquad
(1-\OvDel)\funcb_{n-2} = 2n \funcb_n,
\end{equation}
which shows that $\{\funca_n, \funcb_n, \funca_{n-2}, \funcb_{n-2}\}$ form a basis of fundamental solutions to the
$H^2$ inertia operator $(1-\OvDel)^2$.

The modified Bessel function identity
$$ I_{\nu}(r) K_{\nu+1}(r) + I_{\nu+1}(r) K_{\nu}(r) = \frac{1}{r}$$
(e.g., \cite{gradry} (8.477))
shows that the Wronskian of \eqref{besseldef} satisfies
\begin{equation}\label{besselwronskian}
\funcb_p(r) \funca_p'(r) - \funca_p(r) \funcb_p'(r) = r \Big[ \frac{\funcb_p(r) \funca_{p+2}(r)}{p+2} 
+ (p+2) \funca_p(r) \funcb_{p+2}(r) \Big] = r^{-p-1}.
\end{equation}
Standard formulas for Bessel functions show that for $p>0$ we have the asymptotic behavior
\begin{alignat}{3}
\funca_p(0) &= 1, \qquad &\qquad
\lim_{r\to 0} r^p \funcb_p(r) &= \tfrac{1}{p}, \label{besselasympssmall} \\
\lim_{r\to\infty} r^{(p+1)/2} e^{-r} \funca_p(r) &= \frac{\constyp}{\sqrt{2\pi}}, \qquad &
\lim_{r\to\infty} r^{(p+1)/2} e^r \funcb_p(r) &= \frac{\sqrt{2\pi}}{2\constyp}.\label{besselasympsbig}
\end{alignat}
In particular $\funca_p(r)$ is finite at $r=0$ and approaches infinity as $r\to\infty$, while $\funcb_p(r)$ does the opposite.
The nice behavior at $r=0$ is the reason for our choice of scaling in the definition \eqref{besseldef}. 

To solve the equation $(1-\OvDel)\ubar(r) = \omegabar(r)$ for $\ubar$, we may use the usual variation of parameters technique to write
\begin{equation}\label{wvarparH1def}
\ubar(r) = \funca_n(r) v_1(r) + \funcb_n(r) v_{2}(r),
\end{equation}
where the functions $v_1$ and $v_2$ satisfy the system
\begin{equation}\label{auxiliaryvarparH1}
\funca_n(r) v_1'(r) + \funcb_n(r) v_2'(r) = 0, \qquad\qquad  \funca_n'(r) v_1'(r) + \funcb_n'(r) v_2'(r) = -\omega(r)/r.
\end{equation}
Using the Wronskian from \eqref{besselwronskian}, the solution of \eqref{auxiliaryvarparH1} is
\begin{equation}\label{uvvarparH1}
v_1'(r) = -r^n \funcb_n(r) \omega(r), \qquad v_2'(r) = r^n \funca_n(r) \omega(r).
\end{equation}
As in the $\dot{H}^1$ case, the asymptotic behavior of $\funca_n$ and $\funcb_n$ implies for finiteness of $u(r)$ that we must have $v_1(r)\to 0$ as $r\to\infty$
and $v_2(0)=0$, so integrating \eqref{uvvarparH1} yields
$$ v_1(r) = \int_r^{\infty}  \funcb_n(s) s^n \omega(s) \,ds, \qquad v_2(r) = \int_0^r \funca_n(s) s^n \omega(s)\,ds.$$
Then plugging into \eqref{wvarparH1def} yields 
$$ u(r) = r\ubar(r) = \funcb_n(r) \int_0^r \funca_n(s) r s^n \omega(s)\,ds + \funca_n(r) \int_r^{\infty} \funcb_n(s) r s^n \omega(s)\,ds,$$
which is of the form \eqref{generalwsoln} with $\delta(r,s) = rs \funca_n(r) \funcb_n(s)$, proving \eqref{H1fullinversion}.

Finally we consider the solution of $(1-\OvDel)^2 \ubar(r) = \omegabar(r)$. We could iterate the formula for $(1-\OvDel)^{-1}$, but this results in rather difficult integrals of Bessel functions with a number of terms that cancel out anyway. A simpler method is variation of parameters using the fundamental solutions \eqref{fundysolns}. This gives
\begin{equation}\label{varparH2}
\ubar(r) =  \funca_n(r) v_1(r) + \funcb_n(r) v_2(r) + \funca_{n-2}(r) v_3(r) + \funcb_{n-2}(r) v_4(r),
\end{equation}
with boundary conditions $v_1(r), v_3(r)\to 0$ as $r\to\infty$ and $v_2(0)=v_4(0)=0$, as in the $H^1$ case above. The standard  technique gives
the auxiliary conditions
\begin{equation}\label{longauxiliaryH2}
\begin{split}
\funca_n(r) v_1'(r) + \funcb_n(r) v_2'(r) + \funca_{n-2}(r) v_3'(r) + \funcb_{n-2}(r) v_4'(r) &= 0, \\
\funca_n'(r) v_1'(r) + \funcb_n'(r) v_2'(r) + \funca_{n-2}'(r) v_3'(r) + \funcb_{n-2}'(r) v_4'(r) &= 0.
\end{split}
\end{equation}
Since $\ubar$, $\ubar'$, and $\ubar''$ only involve differentiating the functions $\funca$ and $\funcb$, we obtain via \eqref{fundysolns} that
\begin{align*}
(1-\OvDel)\ubar(r) &= \big[ (1-\OvDel)\funca_n(r)\big] v_1(r)
+ \big[ (1-\OvDel)\funcb_n(r)\big] v_2(r)  \\
&\qquad\qquad+ \big[ (1-\OvDel)\funca_{n-2}(r)\big] v_3(r) + \big[ (1-\OvDel)\funcb_{n-2}(r)\big] v_4(r) \\
&= -\tfrac{2}{n} \funca_n(r) v_3(r) + 2n \funcb_n(r) v_4(r).
\end{align*}

We therefore impose one more auxiliary condition
\begin{equation}\label{3rdauxiliaryH2}
-\tfrac{2}{n} \funca_n(r) v_3'(r) + 2n \funcb_n(r) v_4'(r) = 0,
\end{equation}
to obtain
$$  \frac{d}{dr} (1-\OvDel) \ubar(r) =  -\tfrac{2}{n} \funca_n'(r) v_3(r) + 2n \funcb_n'(r) v_4(r),$$
and finally get the equation
\begin{equation}\label{finalH2green}
(1-\OvDel)^2 \ubar(r) =  \tfrac{2}{n}\funca_n'(r) v_3'(r) - 2n \funcb_n'(r) v_4'(r) = \omega(r)/r.
\end{equation}

Using the Wronskian \eqref{besselwronskian}, the solution of the system \eqref{3rdauxiliaryH2}--\eqref{finalH2green} is given by
\begin{equation}\label{u2v2solnH2green}
v_3'(r) = \tfrac{n}{2} r^n \funcb_n(r) \omega(r), \qquad \qquad v_4'(r) = \tfrac{1}{2n} r^n \funca_n(r) \omega(r),
\end{equation}
similarly to the $H^1$ case \eqref{uvvarparH1}.
We now use these in \eqref{longauxiliaryH2} to solve for $v_1'$ and $v_2'$, which become (using
\eqref{besselprimeup} and \eqref{besselwronskian} to simplify) the system
\begin{align*}
\funca_n(r) v_1'(r) + \funcb_n(r) v_2'(r) &= -\tfrac{1}{2} \omega(r), \\
\funca_n'(r) v_1'(r) + \funcb_n'(r) v_2'(r)  &= 0,
\end{align*}
with solution
$$ v_1'(r) = \tfrac{1}{2} r^{n+1} \funcb_n'(r) \omega(r), \qquad \qquad v_2'(r) = -\tfrac{1}{2} r^{n+1} \funca_n'(r) \omega(r).$$
Equivalently using \eqref{besselprimedown} we may rewrite this as
\begin{equation}\label{u1v1solnH2green}
v_1'(r) = -\tfrac{1}{2} r^n \big[ n \funcb_n(r) + \tfrac{1}{n}\funcb_{n-2}(r)\big] \omega(r), \qquad\qquad 
v_2'(r) = \tfrac{n}{2} r^n \big[ \funca_n(r) - \funca_{n-2}(r)\big] \omega(r).
\end{equation}
Now using the boundary conditions $v_i(r)\to 0$ as $r\to\infty$ for odd $i$ and $v_i(0)=0$ for even $i$, we can integrate \eqref{u2v2solnH2green}
and \eqref{u1v1solnH2green}, plug in to formula \eqref{varparH2}, and obtain the solution
\begin{multline*} \ubar(r) = 
\tfrac{1}{2} \funca_n(r) \int_r^{\infty}\big[ n \funcb_n(s) + \tfrac{1}{n}\funcb_{n-2}(s)\big] s^n \omega(s) \,ds
 + \tfrac{n}{2}  \funcb_n(r) \int_0^r \big[ \funca_n(s) - \funca_{n-2}(s)\big] s^n \omega(s) \,ds \\
 - \tfrac{n}{2} \funca_{n-2}(r) \int_r^{\infty} \funcb_n(s) s^n \omega(s) \,ds 
 + \tfrac{1}{2n} \funcb_{n-2}(r) \int_0^r \funca_n(s) s^n \omega(s) \,ds 
 \end{multline*}
Multiplying by $r$ to get $u(r)$ puts this in the form \eqref{generalwsoln} with $\delta(r,s)$ given by \eqref{H2fullinversion}.
\end{proof}

\section{Local existence in the space of radial $C^1$ diffeomorphisms}\label{sec:app_well}

In this section we consider the local existence for the equation \eqref{rhoODE} on the space of  continuous functions on $[0,\infty)$ 
bounded above and below by positive constants, as
discussed at the end of Section \ref{sec:vort_trans}, denoted by
\begin{equation}\label{spaceydef}
\spacey = \big\{ \rho\in C([0,\infty),\mathbb{R}_+) \, \big\vert\, \exists b\ge a>0 \text{ s.t. } \forall r\ge 0, \quad a\le \rho(r)\le b\big\}.
\end{equation}
Open balls in this space in the supremum norm are given by 
\begin{equation}\label{densityspace}
\Pspace_{a,b} := \Big\{ \rho\in C([0,\infty),\mathbb{R}_+) \, \Big\vert\, \exists a>0, b\ge a \text{ s.t. } a< \rho(r)< b \,\forall r\in [0,\infty)\Big\}.
\end{equation} 
In this Appendix we will prove that 
when $\delta(r,s) = r^p s^{-q}$ for $p,q>0$, the vector field \eqref{vectorfield} is a Lipschitz vector field on each $\Pspace_{a,b}$, and thus we have local existence on $\spacey$. Since $\rho(t,r) = \gamma_r(t,r)$, this shows existence of $C^1$ radial solutions in the $\dot{H}^1$ case for $n\ge 1$
and the $\dot{H}^2$ case for $n\ge 3$. 

Explicitly when $\delta(r,s) = r^p s^{-q}$, the vector field \eqref{vectorfield} becomes $X(\rho)= \rho \vectyF(\rho)$, where 
\begin{equation}\label{rhovecpowers}
\vectyF(\rho)(r) = -q \gamma(r)^{-q-1} \int_0^r \frac{\gamma(s)^p }{\rho(s)}  \, s^{n-1} \omega_0(s)\,ds + p\gamma(r)^{p-1} 
\int_r^{\infty} \frac{\gamma(s)^{-q}}{\rho(s)}  \, s^{n-1}\omega_0(s)\,ds. 
\end{equation}
Here we think of $\gamma$ as also being a function of $\rho$ defined by $\gamma(r) = \int_0^r \rho(s)\,ds$, while $\omega_0$ is independent of $\rho$, as in Section \ref{sec:vort_trans}.

To simplify matters we will write this in the form 
\begin{equation}\label{vecty}
\vectyF(\rho) = -q \vectyFone_{p,q+1}(\rho) + p \vectyFtwo_{p-1,q},
\end{equation}
where 
\begin{equation}\label{vecfields12}
\vectyFone_{p,q}(\rho)(r) = \gamma(r)^{-q} \int_0^r \frac{\gamma(s)^p }{\rho(s)}  \, z_0(s)\,ds \;\text{and}\;
\vectyFtwo_{p,q}(\rho)(r) = \gamma(r)^p  \int_r^{\infty} \frac{\gamma(s)^{-q}}{\rho(s)}  \, z_0(s)\,ds,
\end{equation}
with $z_0(s) = s^{n-1} \omega_0(s)$. 

\begin{proposition}\label{rhopowerslipschitzthm}
Suppose $p$ and $q$ are nonnegative integers and that $\int_0^{\infty} s^{p-q+n-1} \lvert \omega_0(s)\rvert \,ds<\infty$. Then for any positive real numbers $a<b$,  
the functions $\vectyFone_{p,q}$ and $\vectyFtwo_{p,q}$ defined by formula \eqref{vecfields12} map $\Pspace_{a,b}$ to $C([0,\infty))$ and are Lipschitz functions in the supremum topology. 
\end{proposition}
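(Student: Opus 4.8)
The plan is to exploit one structural fact. Although $\gamma$ is unbounded on $[0,\infty)$ and its negative powers are singular at $r=0$, the normalized flow $g(r):=\gamma(r)/r=\tfrac1r\int_0^r\rho(s)\,ds$ is just the average of $\rho$ over $[0,r]$; hence $g$ takes values in the fixed compact interval $[a,b]\subset(0,\infty)$ whenever $\rho\in\Pspace_{a,b}$, it extends continuously to $r=0$ with $g(0)=\rho(0)$, and it depends on $\rho$ in a $1$-Lipschitz way: $\lVert g-\tilde g\rVert_\infty\le\lVert\rho-\tilde\rho\rVert_\infty$. The first step is to rewrite the two vector fields in \eqref{vecfields12} so that the only bare power of the spatial variable that survives is the weight $s^{p-q+n-1}$ from the hypothesis. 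Using $\gamma(r)=rg(r)$ and $z_0(s)=s^{n-1}\omega_0(s)$, the identities $\gamma(r)^{-q}\gamma(s)^p=g(r)^{-q}g(s)^p(s/r)^q s^{p-q}$ and $\gamma(r)^p\gamma(s)^{-q}=g(r)^pg(s)^{-q}(r/s)^p s^{p-q}$ give
\[
\vectyFone_{p,q}(\rho)(r)=g(r)^{-q}\int_0^r\Big(\tfrac{s}{r}\Big)^{q}\frac{g(s)^p}{\rho(s)}\,s^{p-q+n-1}\omega_0(s)\,ds,
\]
\[
\vectyFtwo_{p,q}(\rho)(r)=g(r)^{p}\int_r^\infty\Big(\tfrac{r}{s}\Big)^{p}\frac{g(s)^{-q}}{\rho(s)}\,s^{p-q+n-1}\omega_0(s)\,ds .
\]
The essential point is that the potentially dangerous kernel factors $(s/r)^q$ (where $s\le r$) and $(r/s)^p$ (where $r\le s$) lie in $[0,1]$ and do not depend on $\rho$; consequently both integrands are dominated, \emph{uniformly in $r$ and in $\rho\in\Pspace_{a,b}$}, by $C(a,b,p,q)\,s^{p-q+n-1}\lvert\omega_0(s)\rvert$.

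From this uniform domination, boundedness is immediate: $\lVert\vectyFone_{p,q}(\rho)\rVert_\infty$ and $\lVert\vectyFtwo_{p,q}(\rho)\rVert_\infty$ are at most $C\int_0^\infty s^{p-q+n-1}\lvert\omega_0(s)\rvert\,ds<\infty$. For continuity: in the $\vectyFtwo_{p,q}$ case $g^p$ is continuous on $[0,\infty)$ and $r\mapsto\int_r^\infty(r/s)^p g(s)^{-q}\rho(s)^{-1}s^{p-q+n-1}\omega_0(s)\,ds$ is continuous by dominated convergence against the fixed $L^1$ majorant; in the $\vectyFone_{p,q}$ case $g^{-q}$ is continuous on $(0,\infty)$ and $r\mapsto\int_0^r(\cdots)\,ds$ is absolutely continuous there, while the same majorant gives $\lvert\vectyFone_{p,q}(\rho)(r)\rvert\le C\int_0^r s^{p-q+n-1}\lvert\omega_0(s)\rvert\,ds\to0$ as $r\to0^+$, so $\vectyFone_{p,q}(\rho)$ extends continuously by the value $0$ at the origin. (This step, and local integrability of the integrands, uses that $z_0=s^{n-1}\omega_0$ is locally integrable, which holds in all applications of this proposition.)

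The Lipschitz bound is the real content, and in the reformulated variables it is elementary. Fix $\rho,\tilde\rho\in\Pspace_{a,b}$, put $\varepsilon=\lVert\rho-\tilde\rho\rVert_\infty$, and let $g=\gamma/r$, $\tilde g=\tilde\gamma/r$, so $\lVert g-\tilde g\rVert_\infty\le\varepsilon$. For $\vectyFone_{p,q}$ I would split the difference by adding and subtracting $\tilde g(r)^{-q}$ times the integral appearing in $\vectyFone_{p,q}(\rho)(r)$: this produces $\big(g(r)^{-q}-\tilde g(r)^{-q}\big)$ times a term bounded by $C\int_0^\infty s^{p-q+n-1}\lvert\omega_0\rvert\,ds$, plus $\tilde g(r)^{-q}\int_0^r(s/r)^q\big(g(s)^p\rho(s)^{-1}-\tilde g(s)^p\tilde\rho(s)^{-1}\big)s^{p-q+n-1}\omega_0(s)\,ds$. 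Each piece is then estimated using that $x\mapsto x^{\pm q}$ and $x\mapsto x^{\pm p}$ are Lipschitz on $[a,b]$ with constants depending only on $a,b,p,q$ (so $\lvert g(r)^{-q}-\tilde g(r)^{-q}\rvert\le C\varepsilon$ and $\lvert g(s)^p-\tilde g(s)^p\rvert\le C\varepsilon$), that $\lvert\rho(s)^{-1}-\tilde\rho(s)^{-1}\rvert\le a^{-2}\varepsilon$, that $(s/r)^q\in[0,1]$ and $\tilde g(r)^{-q}\le a^{-q}$, and finally that $\int_0^\infty s^{p-q+n-1}\lvert\omega_0(s)\rvert\,ds<\infty$ absorbs the remaining integral. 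This yields $\lVert\vectyFone_{p,q}(\rho)-\vectyFone_{p,q}(\tilde\rho)\rVert_\infty\le L\varepsilon$ with $L=L(a,b,p,q,\omega_0)$; the identical argument with $(r/s)^p$ in place of $(s/r)^q$, splitting off $g(r)^p$ instead of $g(r)^{-q}$, and integrating over $[r,\infty)$ handles $\vectyFtwo_{p,q}$.

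The obstacle is conceptual rather than computational. The direct approach of regarding $\rho\mapsto\vectyF^i(\rho)$ as a composition of Lipschitz maps collapses at the outset, since $\rho\mapsto\gamma$ is not Lipschitz into any $\Pspace_{a,b}$ — $\gamma$ is unbounded — and $\gamma\mapsto\gamma^{-q}$ is not even defined near $r=0$. All the difficulty is in choosing the right reformulation: replacing $\gamma$ by the normalized flow $g=\gamma/r$, which lives in a fixed compact subinterval of $(0,\infty)$ and is $1$-Lipschitz in $\rho$, and absorbing the bare factors $r^{-q}$ and $r^{p}$ into the $\rho$-independent, $[0,1]$-valued kernels $(s/r)^q$ and $(r/s)^p$. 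Once that is in place, every remaining estimate is a routine bound against the finite weighted norm $\int_0^\infty s^{p-q+n-1}\lvert\omega_0(s)\rvert\,ds$.
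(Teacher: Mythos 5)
Your proof is correct and follows essentially the same route as the paper's: the same three-way splitting of the difference (the power of the flow evaluated at $r$, the power evaluated at $s$, and the reciprocal of $\rho$), using the same bounds $ar\le\gamma(r)\le br$ and $\lvert\gamma(r)-\eta(r)\rvert\le\varepsilon r$; your normalization $g=\gamma/r$ with the $[0,1]$-valued kernels $(s/r)^q$ and $(r/s)^p$ is just a cleaner repackaging of the paper's final step $r^{-q}\int_0^r s^{p}\lvert z_0(s)\rvert\,ds\le\int_0^r s^{p-q}\lvert z_0(s)\rvert\,ds$ and its analogue for $\vectyFtwo_{p,q}$. The only genuine addition is your explicit continuity argument at $r=0$, which the paper leaves implicit.
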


\begin{proof}
We start with $\vectyFone_{p,q}$. Let $\rho$ and $\zeta$ be two elements of $\Pspace_{a,b}$, so that $a< \rho(r)< b$ and $a< \zeta(r)< b$ for all $r\ge 0$. We will show that $\vectyFone$ is bounded and Lipschitz on this set. 
Let $\gamma$ and $\eta$ be the corresponding diffeomorphisms given by
$$ \gamma(r) := \int_0^r \rho(\sigma)\,d\sigma \qquad \text{and}\qquad \eta(r) := \int_0^r \zeta(\sigma)\,d\sigma.$$
We clearly have $ar \le \gamma(r)\le br$ for all $r\ge 0$. If $\lVert \rho - \zeta\rVert = \varepsilon$, then 
by definitions of $\gamma$ and $\eta$ we clearly have $\lvert \gamma(r)-\eta(r)\rvert \le \varepsilon r$ for all $r\ge 0$. 

First we establish boundedness for a particular $\rho$: we have 
$$ \lvert \vectyFone(\rho)(r)\rvert \le  \gamma(r)^{-q} \int_0^r \frac{\gamma(s)^p }{\rho(s)}  \, \lvert  z_0(s)\rvert \,ds
\le \frac{b^p r^{-q}}{a^{q+1}}\int_0^r s^p s^{n-1} \lvert \omega_0(s)\rvert \, ds \le \frac{b^p}{a^{q+1}} \int_0^r s^{p-q+n-1} \lvert \omega_0(s)\rvert \, ds,$$
since $r^{-q}\le s^{-q}$ for $s\in [0,r]$, using the fact that $q\ge 0$. 

Next we establish the Lipschitz bound. The difference $\vectyFone(\rho)-\vectyFone(\zeta)$ can be expressed as a sum of three terms:
$$ \vectyFone(\rho) - \vectyFone(\zeta) = I + II + III, $$
where 
\begin{align}
I(r) &= \big[ \gamma(r)^{-q} - \eta(r)^{-q} \big]\int_0^r \frac{z_0(s)\gamma(s)^p}{\rho(s)}\,ds  \label{Itermlip} \\ 
II(r) &= \eta(r)^{-q} \int_0^r \big[ \gamma(s)^p - \eta(s)^p \big] \, \frac{z_0(s)}{\rho(s)} \,ds \label{IItermlip} \\
III(r) &= \eta(r)^{-q} \int_0^r \eta(s)^p z_0(s) \Big[ \frac{1}{\rho(s)} - \frac{1}{\zeta(s)} \Big] \, ds.\label{IIItermlip}
\end{align}
We need to bound the three terms \eqref{Itermlip}--\eqref{IIItermlip} in terms of $\varepsilon$. 

For term $I$ in \eqref{Itermlip} we observe, using $ar\le \gamma(r),\eta(r)\le br$, that
\begin{align*}
\big\lvert \gamma(r)^{-q}-\eta(r)^{-q}\big\rvert &= \eta(r)^{-q}\gamma(r)^{-q} \big\lvert  \eta(r)^q - \gamma(r)^q\big\rvert \\
&= \eta(r)^{-q}\gamma(r)^{-q} \big\lvert \eta(r) - \gamma(r)\big\rvert \sum_{i=0}^{q-1} \eta(r)^i \gamma(r)^{q-1-i} \\
&\le (ar)^{-2q} \, (\varepsilon r) \, \sum_{i=0}^{q-1} (br)^i (br)^{q-1-i} \le \varepsilon q b^{q-1} a^{-2q} r^{-q}.
\end{align*}
As such we immediately obtain 
\begin{align*} 
\big\lvert I(r)\big\rvert &\le \frac{\varepsilon q b^{q-1}}{a^{2q}} r^{-q} \int_0^r \frac{\lvert z_0(s)\rvert\gamma(s)^p}{\rho(s)}\,ds \\
&\le  \frac{\varepsilon q b^{q-1}}{a^{2q}} r^{-q} \int_0^r \frac{ \lvert z_0(s)\rvert b^p s^p}{a}\,ds \le \frac{\varepsilon q b^{q+p-1}}{a^{2q+1}} \, r^{-q} \int_0^r s^{p} \lvert z_0(s)\rvert \,ds.
\end{align*}

For term $II$ in \eqref{IItermlip} a similar technique yields 
$$ \big\lvert \gamma(s)^p - \eta(s)^p \big\rvert 
= \big\lvert \gamma(s)-\eta(s)\big\rvert \sum_{i=0}^{p-1} \gamma(s)^i \eta(s)^{p-1-i} 
\le (\varepsilon s) p (bs)^{p-1} = \varepsilon p b^{p-1} s^p,$$
and thus 
$$ II(r) \le (ar)^{-q} \int_0^r \frac{\varepsilon p b^{p-1} s^p \lvert z_0(s)\rvert}{a} \, ds = \frac{\varepsilon pb^{p-1}}{a^{q+1}} \, r^{-q} \int_0^r s^{p} \lvert z_0(s)\rvert \,ds.$$

Finally for term $III$ in \eqref{IIItermlip} we obtain 
$$ III(r) = (ar)^{-q} \int_0^r (bs)^p \, \frac{\lvert z_0(s)\rvert}{\rho(s)\zeta(s)} \big\lvert \zeta(s)-\rho(s)\big\rvert \,ds 
\le \frac{\varepsilon b^p}{a^{q+2}} \, r^{-q} \int_0^r s^{p} \lvert z_0(s)\rvert \, ds.$$
Summing up, we obtain 
$$ \big\lvert \vectyFone(\rho)(r) - \vectyFone(\zeta)(r)\big\rvert \le 
\frac{\varepsilon b^{p-1} }{a^{2q+1}}  \,  \big( q b^q  + p a^q +  b a^{q-1} \big)    r^{-q}  \int_0^r s^{p} \lvert z_0(s)\rvert \, ds.$$
Hence 
$$ \lVert \vectyFone(\rho)-\vectyFone(\zeta)\rVert \le C\varepsilon r^{-q} \int_0^r s^{p+n-1} \lvert \omega_0(s)\rvert \, ds \le 
C\varepsilon \int_0^{\infty} s^{p-q+n-1} \lvert \omega_0(s)\rvert \, ds,$$
as desired.

The computations for the term $\vectyFtwo$ are exactly the same, except that the integrals are on $[r,\infty)$ and the estimate becomes 
$$ \lVert \vectyFtwo(\rho)-\vectyFtwo(\zeta)\rVert \le C \varepsilon r^p \int_r^{\infty} s^{n-1-q} \lvert \omega_0(s)\rvert \,ds \le 
C\varepsilon \int_0^{\infty} s^{p-q+n-1} \lvert \omega_0(s)\rvert \, ds,$$
using the fact that $r^p\le s^p$ for $s\ge r$, since $p\ge 0$. 
This is the same as the estimate for $\vectyFone$. 
\end{proof}

We now conclude local well-posedness of the vector field \eqref{rhovecpowers}, and thus local well-posedness of the $\dot{H}^1$ and $\dot{H}^2$ Euler-Arnold 
equations in the form \eqref{rhoODE} on the space $\spacey$ defined in \eqref{spaceydef}.

\begin{corollary}\label{localexistenceC0}
Consider the vector field $X$ given by \eqref{vectorfield} and the differential equation $\frac{d\rho}{dt} = X(\rho)$ on the space $\spacey$ defined by \eqref{spaceydef}, where $X(\rho) = \rho\vectyF(\rho)$  and $\vectyF$ given by \eqref{vecty}--\eqref{vecfields12}. Then $X$ is locally Lipschitz on $\spacey$, and the solution can be constructed on a time interval $[0,T)$ as long as $\rho(t)$ remains bounded above and below by positive 
numbers. In particular the radial $\dot{H}^1$ and $\dot{H}^2$ equations \eqref{rhoODE} for $\rho$ with $\delta$ given by either \eqref{H1dotinversion} or \eqref{H2dotinversion} are locally well-posed on the space $\spacey$, as long as either 
$$ \int_0^{\infty} \lvert \omega_0(s)\rvert \,ds < \infty \qquad \text{or}\qquad \int_0^{\infty} s^2 \lvert \omega_0(s)\rvert \,ds < \infty,\qquad \text{respectively}. $$
\end{corollary}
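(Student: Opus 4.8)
The plan is to derive this corollary from Proposition~\ref{rhopowerslipschitzthm} together with the usual Picard--Lindel\"of theorem on an open subset of a Banach space. The relevant Banach space is $C_b([0,\infty))$, the bounded continuous functions with the supremum norm; the set $\spacey$ defined in~\eqref{spaceydef} is open in it, since if $a\le\rho\le b$ then every $\zeta$ with $\lVert\zeta-\rho\rVert<a$ satisfies $0<a-\lVert\zeta-\rho\rVert\le\zeta(r)\le b+\lVert\zeta-\rho\rVert$ for all $r$, hence $\zeta\in\spacey$. The sets $\Pspace_{a,b}$ of~\eqref{densityspace} are open and cover $\spacey$, so it suffices to show that $X=\rho\,\vectyF(\rho)$ is bounded and Lipschitz on each of them.

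First I would treat a monomial kernel $\delta(r,s)=r^ps^{-q}$ (equivalently $\varphi(r,s)=r^{p-1}s^{-q-1}$) with $p,q$ nonnegative integers. The decomposition~\eqref{vecty}--\eqref{vecfields12} writes $\vectyF=-q\,\vectyFone_{p,q+1}+p\,\vectyFtwo_{p-1,q}$, and for each of the maps $\vectyFone_{p,q+1}$ and $\vectyFtwo_{p-1,q}$ the integrability condition required by Proposition~\ref{rhopowerslipschitzthm} is precisely $\int_0^\infty s^{p-q+n-2}\lvert\omega_0(s)\rvert\,ds<\infty$. Under that hypothesis the proposition shows both maps are bounded and Lipschitz on every $\Pspace_{a,b}$, with constants depending only on $a$, $b$, and this integral; hence so is $\vectyF$, and in particular $\vectyF(\rho)\in C_b([0,\infty))$. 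Since $\rho\mapsto\rho$ is bounded by $b$ and $1$-Lipschitz on $\Pspace_{a,b}$, the product estimate $\lVert\rho\,\vectyF(\rho)-\zeta\,\vectyF(\zeta)\rVert\le b\,\lVert\vectyF(\rho)-\vectyF(\zeta)\rVert+\lVert\vectyF(\zeta)\rVert\,\lVert\rho-\zeta\rVert$ shows $X$ is bounded and Lipschitz on $\Pspace_{a,b}$, hence locally Lipschitz on $\spacey$. When $\delta$ is a finite sum of such monomials the vector field~\eqref{vectorfield} is the corresponding sum, since it depends linearly on $\delta$ through $\partial_1\delta$ and $\partial_2\delta$; so the conclusion persists provided each summand meets its integrability condition.

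With $X$ locally Lipschitz, Picard--Lindel\"of produces for each $\rho_0\in\spacey$ a unique solution of $\frac{d\rho}{dt}=X(\rho)$ on a maximal interval $[0,T)$. For the continuation statement I would argue by contradiction: if $T<\infty$ and $\rho(t)$ stays in some fixed $\Pspace_{a,b}$ for all $t\in[0,T)$, then $\lVert\rho_t\rVert=\lVert X(\rho)\rVert\le M$ is uniformly bounded, so $t\mapsto\rho(t)$ is Lipschitz into $C_b$ and extends continuously to $t=T$ with $a\le\rho(T)\le b$, hence $\rho(T)\in\spacey$; restarting the iteration at $\rho(T)$ then extends the solution past $T$, contradicting maximality. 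So the solution persists exactly as long as $\rho(t)$ remains bounded above and below by positive numbers.

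It remains to specialize. Writing the $\dot H^1$ kernel~\eqref{H1dotinversion} as $\delta(r,s)=rs\varphi(r,s)=\tfrac1n r\,s^{1-n}=r^1s^{-(n-1)}$ gives $p=1$ and $q=n-1$ (a nonnegative integer for $n\ge1$), so $p-q+n-2=0$ and the hypothesis of Proposition~\ref{rhopowerslipschitzthm} reduces to $\int_0^\infty\lvert\omega_0(s)\rvert\,ds<\infty$. Writing the $\dot H^2$ kernel~\eqref{H2dotinversion} as $\delta(r,s)=rs\varphi(r,s)=\tfrac{1}{2n(n-2)}r\,s^{3-n}-\tfrac{1}{2n(n+2)}r^3s^{1-n}$ exhibits it as the sum of the monomials $r^1s^{-(n-3)}$ and $r^3s^{-(n-1)}$, with $q\in\{n-3,n-1\}\ge0$ for $n\ge3$ and $p-q+n-2=2$ in both cases, so the hypothesis becomes $\int_0^\infty s^2\lvert\omega_0(s)\rvert\,ds<\infty$. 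This yields the stated local well-posedness on $\spacey$, and since $\rho(t,r)=\gamma_r(t,r)$ it produces $C^1$ radial solutions $\gamma$ on the corresponding intervals. I expect the only genuinely delicate point, beyond quoting Proposition~\ref{rhopowerslipschitzthm}, to be the continuation argument on the non-closed, nonlinear set $\spacey$: one must check that confinement to a fixed $\Pspace_{a,b}$ gives simultaneously a uniform Lipschitz constant and a uniform bound on $\lVert X(\rho)\rVert$, and that the limit $\rho(T)$ lands back in $\spacey$ rather than on its boundary. Everything else is routine Banach-space ODE theory.
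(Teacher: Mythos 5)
Your proposal is correct and follows essentially the same route as the paper: decompose the monomial kernels $\delta(r,s)=r^ps^{-q}$ via \eqref{vecty}--\eqref{vecfields12}, apply Proposition~\ref{rhopowerslipschitzthm} to $\vectyFone_{p,q+1}$ and $\vectyFtwo_{p-1,q}$ on each ball $\Pspace_{a,b}$ (with the same exponent count $p-q+n-2$, giving $0$ for $\dot H^1$ and $2$ for $\dot H^2$), and conclude by Picard--Lindel\"of. Your added details---openness of $\spacey$, the product Lipschitz estimate for $\rho\vectyF(\rho)$, and the continuation argument---are standard points the paper leaves implicit, and they are handled correctly.
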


\begin{proof}
Proposition \ref{rhopowerslipschitzthm} shows that on any open ball $\Pspace_{a,b}$, the fields $\vectyFone_{p,q+1}$ and $\vectyFtwo_{p-1,q}$ are both bounded Lipschitz vector fields for $p\ge 1$ and $q\ge 0$, as long as 
\begin{equation}\label{momentumL1}
\int_0^{\infty} s^{p+n-q-2} \lvert \omega_0(s)\rvert \, ds < \infty.
\end{equation}
Since our given vector field $\rho \vectyF$ is a product of $\rho$ with a linear combination sum of $\vectyFone_{p,q+1}$ and $\vectyFtwo_{p-1,q}$, it is obviously also bounded and locally Lipschitz under the same condition.

In the $\dot{H}^1$ case we have a single term of the form \eqref{rhovecpowers} with $p=1$ and $q=n-1$, so condition \eqref{momentumL1} is given by 
$$ \int_0^{\infty} \lvert \omega_0(s)\rvert \, ds < \infty.$$
In the $\dot{H}^2$ case we have two terms of this form, with either $p=3$ and $q=n-1$ or $p=1$ and $q=n-3$, and in either case the relevant combination is $p+n-q-2 = 2$, so condition \eqref{momentumL1} is given by 
$$ \int_0^{\infty} s^2 \lvert \omega_0(s)\rvert \, ds < \infty.$$
\end{proof}

The computation above can be used to show not only that $\vectyF$ is locally Lipschitz, but is in fact continuously differentiable of all orders, and likely even real analytic. One proceeds as follows: the derivative of $\vectyFone_{p,q}$ with respect to $\rho$ in a direction $v$ is given by 
\begin{multline*}
(D\vectyFone_{p,q})_{\rho}(v)(r) %
=-q \gamma(r)^{-q-1} \eta(r) \int_0^r \frac{\gamma(s)^p }{\rho(s)}  \, z_0(s)\,ds \\
+ p\gamma(r)^{-q} \int_0^r \frac{\gamma(s)^{p-1} \eta(s) }{\rho(s)}  \, z_0(s)\,ds 
- \gamma(r)^{-q} \int_0^r \frac{\gamma(s)^p v(s)}{\rho(s)^2}  \, z_0(s)\,ds,
\end{multline*}
where $\eta(r) = \int_0^r v(s)\,ds$. This is of the same basic form as $\vectyFone_{p,q}(\rho)$ itself, with the same powers inside and outside the integral, since $\gamma(r)$ and $\eta(r)$ are both bounded above and below by a constant times $r$. Hence the Lipschitz estimate works here as well, and we get that $\vectyFone$ is continuously differentiable, of the same form as we started. We can then iterate this procedure inductively and obtain a $C^{\infty}$ vector field on $\Pspace$, and keeping track of the size of the bounds on the derivatives would also be expected to give real analyticity, as in \cite{washabaugh2016sqg}.

The technique used here is also applicable for the nonhomogeneous $H^1$ and $H^2$ metrics, but the definition of $\spacey$ and the estimates become rather more complicated, 
and would take us too far afield. Already in the simplest case, $H^1$ in dimension $n=1$, we have the Camassa-Holm equation where a similar analysis was performed in \cite{lee2017local}, requiring a more careful definition of the space $\spacey$ to make everything work. The difference is that here $\partial_1\delta(r,s)$ is essentially the same as $\delta(r,s)/r$, while Bessel functions grow at infinity like exponentials so that $\partial_1\delta(r,s)$ looks like $\delta(r,s)$. Since we have a general local existence result in $H^{\infty}$ for the full $H^k$ equations already (see Proposition \ref{prop:wellposedness}), this result is not necessary for the present purpose of studying global existence, but it would be interesting to establish local existence in the best-possible space of $C^1$ radial diffeomorphisms for the nonhomogeneous metrics on $\mathbb{R}^n$.

\bibliography{refs}
\bibliographystyle{abbrv}

\end{document}